\numberwithin{equation}{section}
\newcommand\link{\operatorname{link}}
\newcommand \SC{\operatorname{SC}}
\newcommand \CCG{\operatorname{CCG}}
\newcommand \ww{\operatorname{W}}
\newcommand \T{\operatorname{T}}
\newcommand \TT{\mathcal{T}}
\newcommand \hh{\mathcal{H}}
\newcommand \cp{\operatorname{CP}}
\newcommand \cc{\operatorname{CC}}
\newcommand \diam{\operatorname{diam}}
\newcommand \con{\mathcal{C}}
\newcommand \ind{\mathcal{I}}
\newcommand{\dd}{\operatorname{d}}
\newtheorem{theorem}{Theorem}[section]
\newtheorem{definition}[theorem]{Definition}
\newtheorem{cons}{Construction}
\newtheorem{lemma}[theorem]{Lemma}
\newtheorem{proposition}[theorem]{Proposition}
\newtheorem{example}[theorem]{Example}
\newtheorem{obs}[theorem]{Observation}
\newtheorem{remark}[theorem]{Remark}
\newtheorem{corollary}[theorem]{Corollary}
\newtheorem{setup}[theorem]{Set-up}
\newtheorem*{notation*}{Notation}
\newtheorem{notation}[theorem]{Notation}
\begin{document}

\title[Shellability of Higher Independence Complexes of Graphs]
{Shellability of Higher Independence Complexes of Graphs}

\dedicatory{Dedicated with deep gratitude to the memory of Professor J{\"u}rgen Herzog}
\author{Arka Ghosh}
\email{arkaghosh1208@gmail.com}

\author{S Selvaraja}
\email{selvas@iitbbs.ac.in}
\address{Department of Mathematics, IIT Bhubaneswar, Bhubaneswar, 752050, India}

\thanks{AMS Classification 2020: 05E45, 13F55,  05E40} 
\keywords{$r$-independence complexes, shellability, block graphs, chordal hypergraphs, graph modifications.}

\begin{abstract}
This paper investigates the shellability of $r$-independence complexes $\ind_r(G)$, a generalization of classical independence complexes introduced by Paolini and Salvetti. For a graph $G$, a subset $A \subseteq V(G)$ is $r$-independent if every connected component of the induced subgraph $G[A]$ has at most $r$ vertices. The associated simplicial complex $\ind_r(G)$ has been the subject of significant interest due to its connections to combinatorial topology and commutative algebra. We address the classification problem for shellable $r$-independence complexes, focusing on block graphs, trees, and related families. Our main results establish sufficient conditions for shellability based on structural graph parameters such as diameter and forbidden subgraphs. Furthermore, we develop constructive techniques for generating shellable complexes through graph operations, including star-clique attachments, clique whiskering, and clique cycle constructions. These results extend and refine earlier work on classical independence complexes and provide a framework for understanding the topological and algebraic properties of higher independence complexes in structured graph families.
\end{abstract}
\maketitle

\section{Introduction}

Let $G = (V(G), E(G))$ be a simple (no loops, no multiple edges) undirected graph with vertex set $V(G) = \{x_1,\dots, x_n\}$ and edge set $E(G)$. A subset $A \subseteq V(G)$ is called an \emph{independent set} if no two vertices in $A$ are adjacent, equivalently, if the induced subgraph $G[A]$ contains no edges. The \emph{independence complex} of $G$, denoted $\ind(G)$, is the simplicial complex whose faces are the independent sets of $G$, that is, 
$\ind(G) = \{ A \subseteq V(G) \mid A \text{ is independent in } G \}.$
Independence complexes have been studied extensively due to their rich combinatorial, topological, and algebraic structure \cite{ABM05, Ca07, Me03, Stanbook, vill_cohen, Wood2}. A central theme in this area is the classification of graph families whose independence complexes exhibit desirable shellability properties \cite{crupi, fv, VanVilla, vill_cohen, Wood2, Russ11}, and the construction of new graphs from given ones -- via operations such as attaching whiskers, adding cliques, or other modifications -- so that the resulting independence complexes are shellable \cite{BFH15, CookNagel, DochEng09, FH, Hibi_cameronwalker, VanVilla, vill_cohen, Wood2}.

The concept of shellability has a rich history dating back to the 19th century, where it first appeared implicitly in the study of convex polytopes.  A (not necessarily pure) simplicial complex $\Delta$ is said to be \emph{shellable} if its facets can be ordered $F_1, F_2, \dots, F_t$ such that for each $k > 1$, the simplicial complex $\langle F_k \rangle \cap \langle F_1, \dots, F_{k-1} \rangle$ is pure and $(\dim F_k - 1)$-dimensional, where $\langle \mathcal{F} \rangle$ denotes the simplicial complex generated by the family $\mathcal{F}$. Originally developed for pure complexes, the theory was significantly expanded in the 1990s when Bj\"orner, Lov\'asz, and Yao \cite{BLY1992} encountered nonpure complexes in complexity theory with topological properties resembling those of shellable complexes. This led Bj\"orner and Wachs \cite{BW96, BW97} to develop a comprehensive theory of shellability for nonpure complexes and posets.
Shellability extends beyond simplicial complexes to various combinatorial structures and plays a crucial role in algebraic combinatorics and commutative algebra \cite{Herzog'sBook, PRS98, Stanbook}. From an algebraic perspective, shellability provides a powerful combinatorial criterion for establishing the sequentially Cohen-Macaulay property of Stanley-Reisner rings. When a simplicial complex is shellable, its Stanley-Reisner ring is sequentially Cohen-Macaulay, with profound implications for both algebraic invariants and topological properties \cite{PRS98, Stanbook}. Topologically, shellability guarantees strong connectivity properties: every pure $d$-dimensional shellable complex is homotopy equivalent to a wedge of $d$-spheres and is $(d-1)$-connected. Despite these elegant theoretical properties, the computational complexity of shellability is challenging -- for every $d \geq 2$, determining whether a pure $d$-dimensional simplicial complex is shellable is NP-hard, hence NP-complete, \cite{GPPZTW19}.

Paolini and Salvetti \cite{PS18} introduced the \emph{$r$-independence complex} of a graph $G$, which generalizes the classical independence complex. A subset $A \subseteq V(G)$ is called \emph{$r$-independent} if every connected component of the induced subgraph $G[A]$ contains at most $r$ vertices. The corresponding simplicial complex, denoted $\ind_r(G)$, consists of all $r$-independent subsets of $V(G)$; in particular, $\ind_1(G) = \ind(G)$. These complexes have been studied in various contexts \cite{FPSAA23, AJM24, HJ15, KRK25, PSA22, DochEng09, TG06}, with the shellability of $r$-independence complexes remaining a central question.  
In this work, we address the shellability of $r$-independence complexes through the framework of chordal hypergraph techniques introduced by Woodroofe \cite{Russ11}. In particular, we rely on the notion of chordal hypergraphs, which provides a powerful tool for analyzing shellability in generalized independence complexes.

Given a graph $G$ and an integer $r \geq 1$, we define the hypergraph $\con_r(G)$ associated with $G$ as follows: the vertex set of $\con_r(G)$ is $V(\con_r(G)) = V(G)$, and the edge set consists of all subsets $S \subseteq V(G)$ of size $r+1$ such that the induced subgraph $G[S]$ is connected. Note that when $r = 1$, the hypergraph $\con_1(G)$ coincides with the original graph $G$.  
The independence complex of this hypergraph coincides with the $r$-independence complex of $G$, i.e., 
$\ind(\con_r(G)) = \ind_r(G)$ for all  $r \geq 1$ \text{\cite[Proposition 2.6]{FPSAA23}}.
Recently, \cite[Theorem 2.7]{FPSAA23} claimed that if $G$ is a tree, then $\con_r(G)$ is a chordal hypergraph for all $r \geq 2$, and consequently, by \cite[Corollary 5.4]{Russ11}, $\ind_r(G)$ is shellable. However, using Proposition \ref{not-chordal}, we construct a class of counterexamples that show this claim does not hold in general. Hence, the shellability of $\ind_r(G)$ for trees is not guaranteed and, to the best of our knowledge, no prior work has explored this direction.

We focus on \emph{block graphs}, a classical family of graphs introduced by Harary~\cite{Harary63}. Recall that a graph $G$ is a block graph if every \emph{block} -- that is, a maximal connected subgraph that remains connected upon the removal of any single vertex -- is a complete graph. In this work, we establish several results connecting the structure of block graphs (and related tree-like graphs) to the shellability of their $r$-independence complexes.

Our main contributions are summarized as follows. First, for any graph $G$ with $n \geq 3$ vertices, we show that if $n-2 \leq r \leq n-1$, then $\ind_r(G)$ is shellable (Theorem~\ref{chordal-cond}). We then generalize this result to forests: for any forest $G$, we prove that $\ind_r(G)$ is shellable for all $r \geq n - 5$ (Theorem~\ref{tree-lower}). Next, we establish that $\ind_2(G)$ is shellable for any block graph $G$ (Theorem~\ref{block-2}), and more generally, if a block graph has diameter at most $4$, then $\ind_r(G)$ is shellable for all $r \geq 2$ (Theorem~\ref{block-diam}). For trees specifically, we prove that if $\operatorname{diam}(G) \leq 5$, then $\ind_r(G)$ is shellable for all $r \geq 2$ (Theorem~\ref{tree-diam}).
We also explore shellability for larger independence parameters. If $G$ is a $\T_3$-free block graph, then $\ind_3(G)$ is shellable, and as a corollary, the same holds for any forest (Theorem~\ref{block:3} and Corollary~\ref{3:tree}). Extending these ideas to more general tree-like structures, we show that if $G$ is a $\mathcal{T}_2$-graph (a generalization of lobster trees; see Definition~\ref{def:T2-graph}), then $\ind_4(G)$ is shellable (Theorem~\ref{block-4}). Finally, if $G$ is a $\mathcal{T}_1$-graph (a generalization of caterpillar trees; see Definition~\ref{def:T1-graph}), then $\ind_r(G)$ is shellable for all $r \geq 5$ (Theorem~\ref{block-5}).

The next focus of this paper is to investigate how various graph modifications -- such as attaching whiskers, adding cliques, or performing other structural operations -- affect the shellability of higher independence complexes. A foundational result by Villarreal~\cite{vill_cohen} shows that for any graph $G$, the independence complex of its whiskered version $W(G)$ -- formed by attaching a pendant vertex (whisker) to each vertex of $G$ -- is Cohen–Macaulay. This was subsequently refined by Dochtermann and Engström~\cite{DochEng09} and independently by Woodroofe~\cite{Wood2}, who showed that $\ind(W(G))$ is vertex decomposable. 
Since vertex decomposability implies shellability, this strengthens Villarreal's result.
Building on this, Cook and Nagel~\cite{CookNagel} introduced the vertex clique-whiskered graph $G^\pi$ and proved that $\ind(G^\pi)$ is both unmixed and vertex decomposable. Biermann et al.~\cite{BFH15} further provided sufficient conditions on a subset $S \subseteq V(G)$ such that $\ind(G \cup W(S))$ is vertex decomposable, where only the vertices in $S$ are whiskered. 

Motivated by these developments, we study the shellability of $r$-independence complexes $\ind_r(G)$ for a family of graphs constructed via star-clique attachments. Given a graph $H$ and a vertex cover $S \subseteq V(H)$, we define, for a fixed integer $r \geq 1$, the graph $\CCG(H, S, r)$ as the graph obtained by attaching a star-clique graph $\SC(x)$ with $|\SC(x)| \geq r+1$ to each vertex $x \in S$. Our main result in this setting is the following.

\vskip 1mm
\noindent
\textbf{Theorem~\ref{whisker}.}  
Let $G = \CCG(H, S, r)$, where $r \geq 1$. Then $\ind_r(G)$ is shellable.

\vskip 1mm
This result extends to a broader class of chordal graphs. Specifically, if $H$ is chordal and $G = \CCG(H, V(H), t)$ for some integer $t \geq 1$, then $\ind_r(G)$ is shellable whenever $r \leq 2t + 1$ (Theorem~\ref{she}). Moreover, we provide a counterexample demonstrating that this bound is sharp: the conclusion fails for $r > 2t + 1$ (Example~\ref{she-higher}).
We also generalize the \emph{clique whiskering} construction introduced by Cook and Nagel~\cite{CookNagel}. Given a graph $G$, a clique vertex-partition $\pi = \{W_1, \ldots, W_k\}$, and an integer $r \geq 1$, the \emph{$r$-clique whiskering} of $G$ with respect to $\pi$, denoted $G^\pi_r$, is obtained by adding $t_i \geq r$ new vertices to each clique $W_i$ and forming a clique on $W_i \cup \{x_{i,1}, \ldots, x_{i,t_i}\}$. We prove the following.

\vskip 1mm
\noindent
\textbf{Theorem~\ref{clique-whisker}.}  
Let $G$ be a graph, and let $\pi = \{W_1, \ldots, W_k\}$ be a clique vertex-partition of $G$. Then $\ind_r(G^\pi_r)$ is shellable for all $r \geq 1$.

\vskip 1mm
Finally, we introduce \emph{clique cycle graphs}, which generalize ordinary cycles by replacing edges with cliques. An \emph{$n$-clique cycle} $\mathrm{CC}(B_1, \ldots, B_n)$ consists of a cyclic arrangement of $n \geq 3$ cliques in which each consecutive pair shares exactly one vertex. We consider graphs $G(r)$ constructed from an $n$-clique cycle by attaching a star-clique graph $\SC(x)$ to each vertex in a subset $A$ of the connecting vertices, requiring that $|V(\SC(x))| \geq r+1$ for at least one $x \in A$. Our final main result is the following.

\vskip 1mm
\noindent
\textbf{Theorem~\ref{clique-cycle}.}  
Let $G = G(r)$ for some $r \geq 1$. Then $\ind_r(G)$ is shellable.

\vskip 1mm
To conclude, we provide an example showing that the conclusions of Theorems~\ref{whisker}, \ref{clique-whisker}, and \ref{clique-cycle} may fail for $\CCG(H, S, t)$, $G^\pi_t$, or $G(t)$ when $t < r$ (Example~\ref{last-ex}).

Our paper is structured as follows. In Section \ref{preliminaries}, we introduce the necessary terminology and foundational results that underpin our work. Section \ref{technical} presents key technical lemmas and propositions essential for establishing our main results. In Section \ref{con-chor}, we investigate the shellability of $\ind_r(G)$, providing sufficient conditions for various classes of graphs. Finally, in Section \ref{modifications}, we examine the shellability of $r$-independence complexes in graph constructions.

\section{Notation and Preliminaries}\label{preliminaries}

In this section, we establish the basic definitions and notation required for the main results.
Let $G$ be a finite simple graph without isolated vertices. For a graph $G$, let $V(G)$ and $E(G)$ denote the vertex set and edge set of $G$, respectively. The degree of a vertex $x \in V(G)$, denoted by $\deg_G(x)$ (or simply $\deg(x)$), is the number of edges incident to $x$.
A subgraph $H \subseteq G$ is called \emph{induced} if for any two vertices $u, v \in V(H)$, the edge $\{u, v\}$ belongs to $E(H)$ if and only if $\{u, v\} \in E(G)$. For a subset $A \subseteq V(G)$, the \emph{induced subgraph} of $G$ on $A$ is denoted by $G[A]$; that is, $G[A]$ is the subgraph of $G$ whose vertex set is $A$ and whose edge set consists of all edges of $G$ with both endpoints in $A$.
For a subset ${u_1, \ldots, u_r} \subseteq V(G)$, we define $N_G(u_1, \ldots, u_r) = \{v \in V(G) \mid \{u_i, v\} \in E(G) \text{ for some } 1 \leq i \leq r\}$ and $N_G[u_1, \ldots, u_r] = N_G(u_1, \ldots, u_r) \cup \{u_1, \ldots, u_r\}$.
For a subset $U \subseteq V(G)$, we denote by $G \setminus U$ the induced subgraph of $G$ on the vertex set $V(G) \setminus U$.

A \emph{path} in $G$ is a sequence of distinct vertices $v_0v_1\cdots v_k$ such that $\{v_i, v_{i+1}\} \in E(G)$ for all $0 \leq i \leq k-1$. The \emph{length} of a path is the number of edges it contains. A graph $G$ is called \emph{connected} if for any two vertices $u, v \in V(G)$, there exists a path from $u$ to $v$. A \emph{connected component} of $G$ is a maximal connected subgraph of $G$.
The \emph{distance} between two vertices $u$ and $v$ in $G$, denoted by $\dd(u, v)$, is the length of the shortest path connecting $u$ and $v$. The \emph{diameter} of $G$, denoted by $\diam(G)$, is the maximum distance between any pair of vertices in $G$. Formally,
$\diam(G) = \max_{u, v \in V(G)} \dd(u, v).$
A vertex $v \in V(G)$ is called a \emph{cut-vertex} if the removal of $v$ from $G$ increases the number of connected components of $G$. A vertex $v \in V(G)$ is called a \emph{simplicial vertex} if the subgraph induced by its neighbors is a complete graph (or clique); that is, for every pair of neighbors $u$ and $w$ of $v$, the edge $\{u, w\} \in E(G)$.

A \emph{simplicial complex} $\Delta$ on a vertex set $V = \{x_1, \ldots, x_n\}$ is a collection of subsets of $V$ satisfying two conditions: (i) every singleton $\{x_i\}$ belongs to $\Delta$ for each $x_i \in V$, and (ii) $\Delta$ is closed under taking subsets; that is, if $F \in \Delta$, then every subset $F' \subseteq F$ also belongs to $\Delta$. The elements of $\Delta$ are called \emph{faces}, and the maximal faces under inclusion are called \emph{facets}. 
Several important constructions are associated with a face $F \in \Delta$: the \emph{link} of $F$ is defined as $\link_\Delta(F) = \{ F' \subseteq V \mid F' \cap F = \emptyset \text{ and } F' \cup F \in \Delta \}$; the \emph{deletion} of $F$ is $\Delta \setminus F = \{ H \in \Delta \mid H \cap F = \emptyset \}$.

A \emph{hypergraph} $\mathcal{H}$ is a pair $(V(\mathcal{H}), E(\mathcal{H}))$, where $V(\mathcal{H})$ is the vertex set and $E(\mathcal{H}) \subseteq 2^{V(\mathcal{H})}$ is the edge set, consisting of subsets of $V(\mathcal{H})$ called the edges. A hypergraph is called \emph{simple} if no edge is contained in another edge; that is, for any distinct edges $e, f \in E(\mathcal{H})$, we have $e \not\subseteq f$. In this paper, we consider all hypergraphs to be simple. A subset $S \subseteq V(\mathcal{H})$ is called \emph{independent} if there is no $e \in E(\mathcal{H})$ such that $e \subseteq S$. The \emph{independence complex} of a hypergraph $\mathcal{H}$, denoted $\ind(\mathcal{H})$, is the simplicial complex whose simplices correspond to the independent subsets of $V(\mathcal{H})$.
The \emph{deletion} of the vertex $v$, denoted by $\mathcal{H} \setminus v$, is the hypergraph defined as:
$V(\mathcal{H} \setminus v) = V(\mathcal{H}) \setminus \{v\}$, 
$E(\mathcal{H} \setminus v) = \{e \in E(\mathcal{H}) \mid v \notin e\}.$
The \emph{contraction} of the vertex $v$, denoted by $\mathcal{H} / v$, is the hypergraph defined as:
\[
V(\mathcal{H} / v) = V(\mathcal{H}) \setminus \{v\}, \quad 
E(\mathcal{H} / v) = \min\bigl\{e \setminus \{v\} \mid e \in E(\mathcal{H})\bigr\},
\]
where $\min(S)$ denotes the inclusion-wise minimal subsets of a collection of sets $S$. Contractions and deletions of distinct vertices can be performed sequentially, and it is well known that the resulting hypergraph does not depend on the order of these operations \cite[Lemma 3]{Se75}.

A hypergraph $\mathcal{H}'$ obtained from $\mathcal{H}$ by deleting a subset of vertices $V_d \subseteq V(\mathcal{H})$ and contracting another subset $V_c \subseteq V(\mathcal{H})$, with $V_d \cap V_c = \emptyset$, is called a \emph{minor} of $\mathcal{H}$, denoted by $\mathcal{H} \setminus V_d / V_c$. If $V_d = \emptyset$, the hypergraph $\mathcal{H}'$ is called a \emph{$c$-minor} of $\mathcal{H}$. Furthermore, if a $c$-minor $\mathcal{H}'$ contains no singleton edges, it is called a \emph{$c'$-minor} of $\mathcal{H}$.

The concept of $\ww$-chordality\footnote{The notation honors Russ Woodroofe's foundational work.}, introduced by Woodroofe, generalizes the notion of chordality from graphs to hypergraphs and provides a framework for studying shellability and linear resolutions.

\begin{definition}[{\cite[Definition 4.2 and Definition 4.3]{Russ11}}]\mbox{}
    \begin{enumerate}
        \item A vertex $v \in V(\mathcal{H})$ in a hypergraph $\mathcal{H}$ is called a \emph{simplicial vertex} if for any two edges $e_1, e_2 \in E(\mathcal{H})$ containing $v$, there exists an edge $e_3 \in E(\mathcal{H})$ such that  
        $e_3 \subseteq (e_1 \cup e_2) \setminus \{v\}.$ 
        \item A hypergraph $\mathcal{H}$ is said to be \emph{$\ww$-chordal} if every minor of $\mathcal{H}$ contains a simplicial vertex.  
    \end{enumerate}
\end{definition}

These definitions lay the foundation for analyzing the shellability and sequentially Cohen--Macaulay properties of independence complexes.

\begin{theorem}[{\cite[Corollary 5.3 and Corollary 5.4]{Russ11}}] \label{russ-result} \mbox{}
    \begin{enumerate}
        \item Let $\mathcal{H}$ be a hypergraph such that every contraction of $\mathcal{H}$ has a simplicial vertex. Then $\ind(\mathcal{H})$ is shellable, and therefore the Stanley--Reisner ideal of $\ind(\mathcal{H})$ is sequentially Cohen--Macaulay.
        \item If $\mathcal{H}$ is a $\ww$-chordal hypergraph, then $\ind(\mathcal{H})$ is shellable, and therefore the Stanley--Reisner ideal of $\ind(\mathcal{H})$ is sequentially Cohen--Macaulay.
    \end{enumerate}
\end{theorem}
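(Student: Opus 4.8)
Since shellability of $\ind(\mathcal{H})$ forces its Stanley--Reisner ring to be sequentially Cohen--Macaulay (a standard consequence), in both parts it suffices to produce a shelling of $\ind(\mathcal{H})$; for part (2) I would prove the stronger statement that $\ind(\mathcal{H})$ is vertex decomposable, whence shellability follows by Bj\"orner--Wachs. The plan is an induction on $|V(\mathcal{H})|$ built on the dictionary between hypergraph minors and the two basic operations on the independence complex: for every vertex $v$ one has $\ind(\mathcal{H})\setminus v=\ind(\mathcal{H}\setminus v)$ and $\link_{\ind(\mathcal{H})}(v)=\ind(\mathcal{H}/v)$. Both identities are immediate from the definitions: a set avoiding $v$ is independent in $\mathcal{H}$ iff it is independent in $\mathcal{H}\setminus v$; and $S\cup\{v\}$ is independent iff no edge $e\ni v$ satisfies $e\setminus\{v\}\subseteq S$, which is exactly independence in $\mathcal{H}/v$. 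Thus the whole argument can be phrased recursively through links and deletions of $\ind(\mathcal{H})$.

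The technical heart is a shedding lemma: if $v$ is a simplicial vertex of $\mathcal{H}$, then $\ind(\mathcal{H})$ admits a shedding vertex $w$. The correct candidate is already visible for graphs ($r=1$): a simplicial $v$ has a neighbour $w$ with $N[v]\subseteq N[w]$, and then for any $F\in\link_{\ind(\mathcal{H})}(w)$ the set $F$ avoids $N[w]\supseteq N(v)$, so $F\cup\{v\}$ is again independent and strictly larger, showing that no facet of $\link_{\ind(\mathcal{H})}(w)$ is a facet of the deletion $\ind(\mathcal{H})\setminus w$ -- that is, $w$ is a shedding vertex. Note that it is $w$, not $v$, that is shed; indeed the leaf of a path $P_3$ is simplicial yet is not itself a shedding vertex. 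For general $\mathcal{H}$ the candidate $w$ must be extracted from the simplicial axiom so that the structural property ``$(\star)$: for every edge $e\ni v$ with $w\notin e$ there is an edge $e'\ni w$ with $e'\setminus\{w\}\subseteq e\setminus\{v\}$'' holds; property $(\star)$ is exactly what forces $F\cup\{w\}$ independent $\Rightarrow F\cup\{v\}$ independent, while the condition $v\notin F$ (so that the extension is proper) must be guaranteed by an edge linking $v$ and $w$. Translating the two--edge simplicial condition into $(\star)$ together with $v\notin F$ in the hypergraph setting is the step I expect to be the main obstacle.

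Granting the shedding lemma, part (2) closes cleanly: $\ww$-chordality is preserved under passing to minors, so both $\mathcal{H}\setminus w$ and $\mathcal{H}/w$ are $\ww$-chordal, and by induction $\ind(\mathcal{H})\setminus w=\ind(\mathcal{H}\setminus w)$ and $\link_{\ind(\mathcal{H})}(w)=\ind(\mathcal{H}/w)$ are vertex decomposable. Combined with the shedding property of $w$, this is precisely the recursive definition of vertex decomposability for $\ind(\mathcal{H})$. Shellability and sequential Cohen--Macaulayness then follow from the standard implications.

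For part (1) the same recursion requires extra care, since the hypothesis ``every contraction of $\mathcal{H}$ has a simplicial vertex'' is closed under contraction (contractions of $\mathcal{H}/W$ are contractions of $\mathcal{H}$) but \emph{not} under deletion, so the deletion branch $\ind(\mathcal{H}\setminus w)$ is not automatically covered by the inductive hypothesis; in particular one cannot simply invoke part (2), whose hypothesis is stronger. I would therefore aim only at shellability (not vertex decomposability) and use the weaker recursive criterion that $\ind(\mathcal{H})$ is shellable once $\link_{\ind(\mathcal{H})}(w)$ is shellable, $\ind(\mathcal{H})\setminus w$ is shellable, and $w$ is a shedding vertex, by concatenating a shelling of the deletion with the cone over a shelling of the link. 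The link branch is handled by induction as above; to bring the deletion branch inside the hypothesis class I would exploit that deletion and contraction commute, so that contractions of $\mathcal{H}\setminus w$ are deletions of contractions of $\mathcal{H}$, and run a secondary induction tracking the simplicial vertices through this interchange. Making the deletion branch fit the contraction-only hypothesis is the delicate point that distinguishes part (1) from part (2).
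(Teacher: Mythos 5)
First, note that the paper contains no proof of this statement: it is quoted directly from Woodroofe \cite[Corollaries 5.3 and 5.4]{Russ11}, so your proposal can only be compared with the argument in that source. Measured against it, your plan has a genuine gap precisely at the step you yourself flag as ``the main obstacle'': the existence of a vertex $w$ satisfying your property $(\star)$. For graphs this works because every edge through a simplicial vertex $v$ has the form $\{v,w\}$, so any neighbour $w$ satisfies $N[v]\subseteq N[w]$; for hypergraphs the simplicial condition is only a \emph{pairwise} condition on the edges through $v$ and singles out no vertex at all. Concretely, take the clutter with edges $e_1=\{v,a_1,a_2\}$, $e_2=\{v,b_1,b_2\}$, $e_3=\{v,c_1,c_2\}$ together with $\{a_1,b_1\}$, $\{a_2,c_1\}$, $\{b_2,c_2\}$: the vertex $v$ is simplicial, yet one checks that no vertex $w$ satisfies $(\star)$ for all edges through $v$. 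So the shedding lemma you need is not a consequence of simpliciality. Two further problems compound this: the properness condition $v\notin F$ cannot be forced by ``an edge linking $v$ and $w$,'' since in a hypergraph an edge containing both $v$ and $w$ does not exclude $\{v,w\}$ from being a face; and for part (2) you aim at vertex decomposability, which is strictly stronger than the cited statement and is not what Woodroofe proves (nor is it known for chordal clutters in general). For part (1), your handling of the deletion branch (``a secondary induction tracking the simplicial vertices through this interchange'') correctly identifies the difficulty but does not resolve it.

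The route in \cite{Russ11} avoids all of this by never deleting. Let $v$ be a simplicial vertex lying in the edges $e_1,\dots,e_k$ of $\mathcal{H}$. A facet $F$ of $\ind(\mathcal{H})$ either contains $v$, in which case $F\setminus\{v\}$ is a facet of $\link_{\ind(\mathcal{H})}(v)=\ind(\mathcal{H}/v)$, or it does not, in which case maximality forces $e_i\setminus\{v\}\subseteq F$ for some $i$, so that $F$ corresponds to a facet of $\ind\bigl(\mathcal{H}/(e_i\setminus\{v\})\bigr)$. Every complex appearing in this decomposition is the independence complex of a \emph{contraction} of $\mathcal{H}$, so an induction on the number of vertices closes under the contraction-only hypothesis of part (1); the simpliciality of $v$ (an edge $e_3\subseteq(e_i\cup e_j)\setminus\{v\}$ for each pair $i,j$) is exactly what verifies the shelling condition when the shellings of these pieces are concatenated. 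Part (2) then follows immediately from part (1), since every contraction is a minor, rather than by a separate vertex-decomposability argument.
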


For any undefined terminology and further basic definitions and properties, we refer the reader to \cite{Herzog'sBook} and \cite{west}.

\section{Technical Preliminaries and Key Results}\label{technical}
In this section, we establish the technical groundwork necessary for the proof of our main theorem. We present a series of key lemmas and propositions that serve as the foundation for our arguments.
 
A collection $\mathcal{F}$ of graphs is called a \emph{hierarchy} if, for every nonempty graph $G \in \mathcal{F}$, the graph $G \setminus u$ also belongs to $\mathcal{F}$ for every vertex $u \in V(G)$.
The following proposition establishes a connection between the structural properties of a graph hierarchy and the $\ww$-chordality of their associated hypergraphs. In particular, it provides a sufficient condition for $\con_r(G)$ to be $\ww$-chordal, based on the existence of simplicial vertices in its minors.

\begin{proposition}\label{del-ope}
    Let $\mathcal{F}$ be a hierarchy of graphs and $r$ a positive integer. If every $c'$-minor of $\con_r(G)$ contains a simplicial vertex for any $G \in \mathcal{F}$, then $\con_r(G)$ is $\ww$-chordal.
\end{proposition}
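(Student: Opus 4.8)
The plan is to verify the definition of $\ww$-chordality directly: I must show that every minor $\mathcal{H}' = \con_r(G) \setminus V_d / V_c$ (with $V_d \cap V_c = \emptyset$) contains a simplicial vertex. The guiding principle is that deletions in the hypergraph $\con_r(G)$ can be absorbed into the graph $G$ itself, while contractions are exactly what the hypothesis controls. The crucial observation is that hypergraph deletion commutes with the $\con_r(\cdot)$ operation: for any vertex $u$, one has $\con_r(G) \setminus u = \con_r(G \setminus u)$. Indeed, an edge of $\con_r(G) \setminus u$ is a size-$(r+1)$ subset $S \subseteq V(G) \setminus \{u\}$ with $G[S]$ connected, and since $(G \setminus u)[S] = G[S]$ for such $S$, these are precisely the edges of $\con_r(G \setminus u)$. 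Iterating over the vertices of $V_d$ gives $\con_r(G) \setminus V_d = \con_r(G \setminus V_d)$.

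Next I would use the fact that deletions and contractions of distinct vertices may be reordered without affecting the result \cite[Lemma 3]{Se75} to perform all deletions first, so that $\mathcal{H}' = \con_r(G \setminus V_d) / V_c$. Setting $G' := G \setminus V_d$, the hierarchy hypothesis ensures $G' \in \mathcal{F}$ (delete the vertices of $V_d$ one at a time, staying in $\mathcal{F}$), and $\mathcal{H}' = \con_r(G') / V_c$ is now a $c$-minor of $\con_r(G')$. This reduction is the heart of the argument: it converts an arbitrary minor of $\con_r(G)$ into a pure contraction of $\con_r(G')$ for a graph $G'$ still lying in $\mathcal{F}$, to which the hypothesis is applicable.

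It then remains to produce a simplicial vertex in the $c$-minor $\mathcal{H}' = \con_r(G') / V_c$, which I would split into two cases. If $\mathcal{H}'$ contains no singleton edge, then by definition it is a $c'$-minor of $\con_r(G')$, and since $G' \in \mathcal{F}$, the hypothesis directly yields a simplicial vertex. If instead $\mathcal{H}'$ has a singleton edge $\{v\}$, then because the hypergraph is simple no other edge can contain $v$ (any such edge would contain $\{v\}$); thus $v$ lies in a unique edge, and the simpliciality condition holds vacuously, as there are no two distinct edges through $v$ to compare, making $v$ simplicial. In either case $\mathcal{H}'$ has a simplicial vertex, and since $\mathcal{H}'$ was an arbitrary minor, $\con_r(G)$ is $\ww$-chordal.

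I expect the main obstacle to be the deletion--contraction bookkeeping rather than any deep content: one must be careful that the identity $\con_r(G) \setminus u = \con_r(G \setminus u)$ holds at the level of hypergraphs (it does, since the connectivity of $G[S]$ for $S \not\ni u$ is unaffected by removing $u$), and that contraction admits no analogous graph-level description. This asymmetry is precisely why the argument can only push the deletions into $G$ and must rely on the hypothesis to handle the surviving contractions. The only other point requiring care is the convention that a vertex sitting in a singleton edge counts as simplicial, which is what justifies restricting the hypothesis to $c'$-minors.
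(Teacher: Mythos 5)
Your proof is correct and follows essentially the same route as the paper: absorb the deletions into the graph via $\con_r(G)\setminus u=\con_r(G\setminus u)$ and the hierarchy hypothesis, then reduce the remaining $c$-minor to the $c'$-minor hypothesis by splitting on singleton edges. The only (harmless) difference is in the singleton-edge subcase, where you observe directly that a vertex forming a singleton edge of a simple hypergraph lies in a unique edge and is therefore simplicial, whereas the paper instead deletes all such vertices to land in a $c'$-minor of a smaller graph in the hierarchy and transfers the simplicial vertex back; your shortcut is slightly more direct and is consistent with the paper's convention (stated in Lemma~\ref{first-sim}) that a vertex belonging to exactly one edge is simplicial.
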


\begin{proof}
    We first show that every $c$-minor of $\con_r(G)$ contains a simplicial vertex for any $G \in \mathcal{F}$. Let $\hh$ be a $c$-minor of $\con_r(G)$, i.e., $\hh = \con_r(G) / V_c$ for some $V_c \subseteq V(G)$. We consider two cases based on the presence of singleton edges in $\hh$:
  If $\hh$ has no singleton edges, then $\hh$ is a $c'$-minor of $\con_r(G)$. By assumption, $\hh$ contains a simplicial vertex.
 Suppose $\hh$ contains singleton edges. Let $S$ denote the set of all singleton edges in $\hh$. Consider the hypergraph
    $\hh \setminus S = (\con_r(G) / V_c) \setminus S.$
    By \cite[Lemma 3]{Se75}, we have
    $\hh \setminus S = (\con_r(G) \setminus S) / V_c.$
    Additionally, for any vertex $x \in V(G)$ and $r \geq 1$, we observe that
    $\con_r(G) \setminus x = \con_r(G \setminus x).$
    Extending this identity to the set $S$, we obtain
    $\hh \setminus S = \con_r(G \setminus S) / V_c.$
    Since $G \setminus S \in \mathcal{F}$ (as $\mathcal{F}$ is a hierarchy), and $\hh \setminus S$ is a $c'$-minor of $\con_r(G \setminus S)$, the hypothesis implies that $\hh \setminus S$ contains a simplicial vertex. Thus, $\hh$ must also contain a simplicial vertex.
In both cases, we conclude that every $c$-minor of $\con_r(G)$ contains a simplicial vertex. Since any minor can be obtained through deletions and contractions, and deletions correspond to graphs in the hierarchy by definition, it follows that all minors of $\con_r(G)$ possess a simplicial vertex. Hence, $\con_r(G)$ is $\ww$-chordal for every $G \in \mathcal{F}$.
\end{proof}

In \cite[Theorem~4.3]{HJ15}, Dao and Schweig proved that a vertex $v$ of a graph $G$ is simplicial if and only if it remains simplicial in $\con_r(G)$ for all $r \geq 1$. However, this equivalence does not hold for general graphs. For example, consider the cycle graph $C_4$: while no vertex in $C_4$ is simplicial, every vertex in $\con_r(C_4)$ becomes simplicial for $r = 2$ (or $r = 3$ under their construction). This demonstrates that the claimed equivalence fails in general.
Motivated by this observation, we examine the persistence of simplicial vertices in minors of the hypergraph $\con_r(G)$.

\begin{lemma}\label{first-sim}
    Let $G$ be a graph, and let $\mathcal{H}$ be a minor of the hypergraph $\con_r(G)$ for some $r \geq 1$. If $x$ is a simplicial vertex of $G$ and $x \in V(\mathcal{H})$, then $x$ remains a simplicial vertex in $\mathcal{H}$.
\end{lemma}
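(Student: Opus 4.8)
The plan is to prove the lemma in three stages: reduce an arbitrary minor to a pure contraction, establish that $x$ is already simplicial in the hypergraph $\con_r(G)$ itself, and then transport this property through the contraction.

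First I would dispose of the deletions. Writing $\mathcal{H} = \con_r(G) \setminus V_d / V_c$ with $V_d \cap V_c = \emptyset$ and $x \notin V_d \cup V_c$, I would invoke the identity $\con_r(G) \setminus V_d = \con_r(G \setminus V_d)$ (a size-$(r+1)$ set avoiding $V_d$ is connected in $G$ exactly when it is connected in $G \setminus V_d$, since the induced subgraph is unchanged) together with the order-independence of minor operations \cite{Se75}. Because $N_{G \setminus V_d}(x) = N_G(x) \setminus V_d$ is a subset of the clique $N_G(x)$, the vertex $x$ is still simplicial in the graph $G \setminus V_d$. Hence it suffices to treat the pure contraction $\mathcal{H} = \con_r(G')/V_c$, where $G' := G \setminus V_d$ and $x$ is simplicial in $G'$; I then rename $G'$ back to $G$.

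The core step is the claim that a simplicial vertex $x$ of $G$ is simplicial in $\con_r(G)$. Given two \emph{distinct} edges $e_1 \neq e_2$ of $\con_r(G)$ with $x \in e_1 \cap e_2$ (the case of equal edges is not required under the standard reading of the definition), I would first note that $G[e_1 \cup e_2]$ is connected, since $G[e_1]$ and $G[e_2]$ are connected and share $x$. The decisive observation is that $G[(e_1 \cup e_2) \setminus \{x\}]$ remains connected: every connected component of this induced subgraph must contain a neighbor of $x$ (otherwise it would be separated from $x$ in the connected graph $G[e_1 \cup e_2]$), yet all such neighbors lie in the clique $N_G(x)$ and are therefore mutually adjacent, forcing a single component. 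Since $e_1 \neq e_2$ and $|e_1| = |e_2| = r+1$ give $|e_1 \cup e_2| \geq r+2$, this connected set has at least $r+1$ vertices, and pruning leaves from a spanning tree yields a connected vertex subset $e_3$ of size exactly $r+1$. Then $e_3 \in E(\con_r(G))$ and $e_3 \subseteq (e_1 \cup e_2) \setminus \{x\}$, which is the hypergraph-simpliciality condition.

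Finally I would push this through the contraction. Any edge $f_i$ of $\mathcal{H} = \con_r(G)/V_c$ containing $x$ has the form $f_i = e_i \setminus V_c$ for some $e_i \in E(\con_r(G))$ with $x \in e_i$ (using $x \notin V_c$), and distinct $f_1, f_2$ force distinct $e_1, e_2$. The core step produces an edge $e_3 \subseteq (e_1 \cup e_2) \setminus \{x\}$ of $\con_r(G)$; since $\mathcal{H}$ has the nonempty edges $f_1, f_2$, we have $\emptyset \notin E(\mathcal{H})$, so no edge of $\con_r(G)$ lies inside $V_c$ and in particular $e_3 \setminus V_c \neq \emptyset$. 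Thus $e_3 \setminus V_c$ contains some edge $f_3$ of $\mathcal{H}$, and
\[
f_3 \subseteq e_3 \setminus V_c \subseteq (e_1 \cup e_2) \setminus (\{x\} \cup V_c) = (f_1 \cup f_2) \setminus \{x\},
\]
verifying that $x$ is simplicial in $\mathcal{H}$. The hard part will be the connectivity argument in the core step; once one sees that the clique structure of $N_G(x)$ collapses the components after deleting $x$, the adjustment to size $r+1$ and the contraction bookkeeping are routine.
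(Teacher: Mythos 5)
Your proposal is correct and follows essentially the same route as the paper: lift the two edges of $\mathcal{H}$ through $x$ to edges of $\con_r(G)$, observe that $(e_1\cup e_2)\setminus\{x\}$ induces a connected subgraph of size at least $r+1$ because the neighbors of the simplicial vertex $x$ form a clique, extract a connected $(r+1)$-subset, and push it back down to an edge of the minor contained in $(f_1\cup f_2)\setminus\{x\}$. The only differences are organizational (you reduce deletions away first and prove the connectivity claim explicitly, which the paper asserts in one line), so there is nothing to correct.
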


\begin{proof}
    Suppose first that $x$ belongs to exactly one edge of $\mathcal{H}$; then $x$ is simplicial by definition. Now consider the case where $x$ belongs to two distinct edges $e_1, e_2 \in E(\mathcal{H})$, with $e_1 \neq e_2$. By construction of the minor, there exist edges $f_1, f_2 \in E(\con_r(G))$ such that
    $e_i = f_i \cap V(\mathcal{H})$ for $i = 1, 2.$
    Since $x$ is simplicial in $G$, the induced subgraph of $G$ on $(f_1 \cup f_2) \setminus \{x\}$ is connected. Moreover, the fact that $f_1 \neq f_2$ implies that
    $|f_1 \cup f_2| \geq r + 2,$
    and hence,
    $|(f_1 \cup f_2) \setminus \{x\}| \geq r + 1.$
    Select a subset $f \subseteq (f_1 \cup f_2) \setminus \{x\}$ of size $r + 1$ such that the induced subgraph on $f$ is connected (which is possible since the entire set is connected and of size at least $r + 1$). Define
    $e_3 := f \cap V(\mathcal{H}).$
     Clearly, $e_3 \subseteq e_1 \cup e_2 \setminus \{x\}$. If $e_3 \in E(\mathcal{H})$, we are done. Otherwise, there exists $e_3' \in E(\mathcal{H})$ such that $e_3' \subseteq e_3 \subseteq e_1 \cup e_2 \setminus \{x\}$. This proves that $x$ is a simplicial vertex in $\mathcal{H}$.
\end{proof}

A graph is called a \emph{star-clique graph}, denoted $ \SC(x) $, if it is formed by attaching a collection of cliques $ B_1, \ldots, B_t $ to a central vertex $ x $, where each $ B_i $ is a complete graph (clique) that may vary in size.

\begin{corollary}\label{complete}
    If $ G $ is a star-clique graph, then $ \con_r(G) $ is a $ \ww $-chordal hypergraph for all $ r \geq 1 $.
\end{corollary}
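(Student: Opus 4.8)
The plan is to derive the corollary from Proposition~\ref{del-ope} together with Lemma~\ref{first-sim}. First I would fix an appropriate hierarchy: let $\mathcal{F}$ be the family of all graphs whose connected components are star-clique graphs (allowing a single clique, and isolated vertices, as degenerate star-clique graphs). I would then verify that $\mathcal{F}$ is a hierarchy, i.e.\ closed under vertex deletion. If $u$ is the central vertex $x$ of a component $\SC(x)$ built from cliques $B_1,\dots,B_t$, then $\SC(x)\setminus x$ is the disjoint union of the cliques $B_i\setminus\{x\}$, each again a star-clique graph; if $u$ is a non-central vertex $y\in B_i$, then deleting $y$ merely shrinks $B_i$ and leaves a star-clique graph. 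In either case $G\setminus u$ is again a disjoint union of star-clique graphs, so $G\setminus u\in\mathcal{F}$. Since every star-clique graph lies in $\mathcal{F}$, Proposition~\ref{del-ope} reduces the task to showing that for every $G\in\mathcal{F}$ and every $r\geq 1$, each $c'$-minor of $\con_r(G)$ contains a simplicial vertex.

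The key structural observation is that every non-central vertex $y$ of a star-clique component is a simplicial vertex of $G$ in the graph sense, because its neighbourhood $B_i\setminus\{y\}$ is a clique. By Lemma~\ref{first-sim}, such a $y$ remains simplicial in any minor of $\con_r(G)$ in which it survives. So, fixing a $c'$-minor $\mathcal{H}=\con_r(G)/V_c$, I would first dispose of the main case: if some non-central vertex $y\notin V_c$, then $y\in V(\mathcal{H})$ is simplicial in $\mathcal{H}$ and we are done.

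It remains to treat the case where every non-central vertex lies in $V_c$. Here I would argue that every surviving vertex of $\mathcal{H}$ is a central vertex and is contained in no edge of $\mathcal{H}$, hence is vacuously simplicial. Indeed, any edge of $\con_r(G)$ is a connected $(r+1)$-subset, so it lies in a single component $\SC(x_j)$ and therefore meets the set of central vertices in at most the single vertex $x_j$. After contracting all non-central vertices, every edge of $\con_r(G)$ thus collapses to a set of size at most one, namely to some singleton $\{x_j\}$ or to $\emptyset$. As $\mathcal{H}$ is a $c'$-minor it has no singleton edges, so no surviving $x_j$ lies in any edge of $\mathcal{H}$; consequently each such $x_j$ is simplicial. (If $V_c=V(G)$ then $V(\mathcal{H})=\emptyset$ and there is nothing to check.) This verifies the hypothesis of Proposition~\ref{del-ope}, and hence $\con_r(G)$ is $\ww$-chordal for every star-clique graph $G$ and every $r\geq 1$.

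The step I expect to be the main obstacle is precisely this final case: ruling out that contracting away all the graph-simplicial vertices could yield a genuine $c'$-minor with no simplicial vertex. The delicate point is to exploit the defining ``no singleton edges'' property of $c'$-minors to force the remaining central vertices to lie in no edge at all, rather than in some edge that would destroy simpliciality; the fact that each connected $(r+1)$-subset meets at most one central vertex is exactly what makes the singleton collapse occur and lets the $c'$-condition finish the argument.
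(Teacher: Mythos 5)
Your proof is correct, and its core --- Lemma~\ref{first-sim} applied to a surviving graph-simplicial vertex --- is exactly the paper's. The difference lies in the scaffolding and in the degenerate case. The paper does not pass through Proposition~\ref{del-ope} at all: it takes an arbitrary minor $\mathcal{H}$ of $\con_r(G)$ directly and observes that, since every vertex of a connected star-clique graph except the centre is simplicial in $G$, excluding all graph-simplicial vertices forces $|V(\mathcal{H})|\le 1$, which settles the remaining case trivially. You instead build the hierarchy of disjoint unions of star-clique graphs, reduce to $c'$-minors via Proposition~\ref{del-ope}, and then must argue that the (possibly several) surviving centres lie in no edge, because each connected $(r+1)$-set meets at most one centre and the $c'$-condition forbids singleton edges. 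Your route is sound and slightly more general (it handles disjoint unions uniformly, and your singleton-collapse argument is carried out carefully), but it is more machinery than the statement requires; the paper's direct cardinality observation makes both the hierarchy reduction and the edge-collapse analysis unnecessary here.
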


\begin{proof}
Let $ V_1 $ be the set of all simplicial vertices of $ G $. Consider a minor $ \mathcal{H} $ of the hypergraph $ \con_r(G) $.  
If $ V_1 \cap V(\mathcal{H}) \neq \emptyset $, then by Lemma~\ref{first-sim}, $ \mathcal{H} $ contains a simplicial vertex.  
If $ V_1 \cap V(\mathcal{H}) = \emptyset $, then all simplicial vertices of $ G $ are excluded from $ \mathcal{H} $, which can only happen if $ \mathcal{H} $ has exactly one vertex. In that case, $ \mathcal{H} $ trivially contains a simplicial vertex.  
Therefore, every minor $ \mathcal{H} $ of $ \con_r(G) $ has a simplicial vertex, which implies that $ \con_r(G) $ is $ \ww $-chordal.
\end{proof}

The following observation highlights a key property of chordal graphs, leveraging the structure of simplicial vertices and their iterative removal.

\begin{obs}\label{chordal-nota}
Let $G$ be a chordal graph with $|V(G)| \geq 2$. A fundamental result of Dirac \cite{Dirac61} states that every chordal graph contains at least one simplicial vertex, and any chordal graph that is not complete contains at least two non-adjacent simplicial vertices.

Define $V_1$ as the set of all simplicial vertices of $G$. By Dirac's theorem, $|V_1| \geq 2$ unless $G$ is complete (in which case $|V_1| = |V(G)| \geq 2$). For $i \geq 2$, recursively define $V_i$ as the set of simplicial vertices in the subgraph $G \setminus \bigcup_{j=1}^{i-1} V_j$. Since chordality is preserved under induced subgraphs, each $G \setminus \bigcup_{j=1}^{i-1} V_j$ remains chordal, and thus each $V_i$ is non-empty. This process terminates at some step $\kappa \geq 1$ when $G \setminus \bigcup_{j=1}^{\kappa} V_j$ becomes empty.
Moreover, for all $1 \leq i < \kappa$, we have $|V_i| \geq 2$ (unless the remaining subgraph is complete, in which case all remaining vertices are simplicial). The sets ${V_i}_{i=1}^\kappa$ form a partition of $V(G)$ into pairwise disjoint subsets.
\end{obs}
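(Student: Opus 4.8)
The plan is to treat the observation as a short induction on the removal process, invoking Dirac's theorem at each stage and using the fact that chordality is inherited by induced subgraphs. I will verify four things in sequence: that each $V_i$ is well-defined and non-empty, that the process terminates after finitely many steps, that $|V_i| \geq 2$ for every intermediate index, and that the collection $\{V_i\}_{i=1}^{\kappa}$ is a partition of $V(G)$.

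First I would fix the abbreviation $G_i := G \setminus \bigcup_{j=1}^{i-1} V_j$ (so that $G_1 = G$) and record that each $G_i$ is chordal: any cycle of length at least four in an induced subgraph is a cycle of $G$, and the chord guaranteed in $G$ survives in the induced subgraph, so chordality is hereditary. Consequently, as long as $G_i$ is non-empty, Dirac's theorem yields a simplicial vertex, whence $V_i \neq \emptyset$. Since a non-empty $G_i$ always produces a non-empty $V_i$, the quantity $|V(G_i)|$ strictly decreases at each step, and finiteness of $V(G)$ forces the recursion to halt; I let $\kappa$ denote the step at which $G \setminus \bigcup_{j=1}^{\kappa} V_j$ first becomes empty.

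Next I would establish the cardinality bound. Suppose $1 \leq i < \kappa$. Then $G_{i+1} = G_i \setminus V_i$ is non-empty, so $V_i \neq V(G_i)$; that is, $G_i$ possesses a vertex that is not simplicial. Because every vertex of a complete graph is simplicial, $G_i$ cannot be complete, and the second clause of Dirac's theorem then furnishes two non-adjacent simplicial vertices of $G_i$, both of which lie in $V_i$; hence $|V_i| \geq 2$. The partition claim is then immediate from the construction: by definition $V_i \subseteq V(G_i)$ is disjoint from $V_1 \cup \cdots \cup V_{i-1}$, and $\bigcup_{i=1}^{\kappa} V_i = V(G)$ precisely because $G \setminus \bigcup_{j=1}^{\kappa} V_j = \emptyset$.

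The argument is essentially routine once Dirac's theorem is in hand; the only point demanding care is the cardinality bound, where one must notice that the hypothesis $i < \kappa$ is exactly what rules out $G_i$ being complete (equivalently, rules out all of its vertices being simplicial), so that the two-simplicial-vertex half of Dirac's theorem applies. At the terminal index $i = \kappa$ no such lower bound is asserted, which is consistent with $G_{\kappa}$ possibly reducing to a single vertex or to a disjoint union of cliques.
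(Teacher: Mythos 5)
Your proposal is correct and follows essentially the same reasoning the paper embeds in the observation itself: hereditary chordality plus Dirac's theorem gives non-emptiness and termination, and the key point that $i<\kappa$ forces $G_i$ to have a non-simplicial vertex (hence to be non-complete) yields $|V_i|\geq 2$ via the two-vertex half of Dirac's theorem. Your closing remark that no lower bound is claimed at $i=\kappa$ (e.g.\ $G_\kappa$ a single vertex) correctly matches the paper's parenthetical caveat.
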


The following observation explores the structure of block graphs, a subclass of chordal graphs. Using the framework established in Observation \ref{chordal-nota}, we analyze the cliques associated with vertices in $V_2$ and derive a key property regarding their structure and size.

\begin{obs}\label{block-nota}
Let $G$ be a block graph. Since every block graph is chordal, we apply the framework introduced in Observation \ref{chordal-nota}. For each vertex $v \in V_2$, there exists a maximum-sized clique in $G$ that contains $v$, such that all other vertices in this clique belong to $V_1$. Denote these cliques by $C_1, C_2, \ldots, C_m$, where $m \geq 1$.
Define 
$S_v = \bigcup_{i=1}^m \left( C_i \setminus \{v\} \right).$
As a consequence, we observe that $|S_v| \geq 1$.
\end{obs}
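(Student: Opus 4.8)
The plan is to reduce the entire observation to a single non-trivial claim, namely that every $v \in V_2$ has at least one neighbor lying in $V_1$; once this is established, both the existence of the cliques $C_1, \dots, C_m$ with $m \geq 1$ and the bound $|S_v| \geq 1$ follow by bookkeeping. First I would record the facts inherited from Observation \ref{chordal-nota}: the sets $V_1, V_2, \dots$ are pairwise disjoint, $V_1$ is precisely the set of simplicial vertices of $G$, and $V_2$ is the set of simplicial vertices of the induced subgraph $G' := G \setminus V_1$. In particular $V_1 \cap V_2 = \emptyset$, so no vertex of $V_2$ is simplicial in $G$ itself. Since in a block graph the simplicial vertices are exactly the non-cut vertices (those lying in a single block), each $v \in V_2$ is a cut-vertex belonging to at least two blocks.

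The key step is the following contradiction argument. Fix $v \in V_2$ and suppose that $v$ has no neighbor in $V_1$, i.e. $N_G(v) \cap V_1 = \emptyset$. Then $N_G(v) \subseteq V(G')$, and since $G'$ is an induced subgraph of $G$ we have $N_{G'}(v) = N_G(v) \cap V(G') = N_G(v)$. Because $v \in V_2$ is simplicial in $G'$, the set $N_{G'}(v)$ induces a clique in $G'$; as $G'$ is induced, this same set induces a clique in $G$. Hence $N_G(v) = N_{G'}(v)$ induces a clique in $G$, which means $v$ is simplicial in $G$, i.e. $v \in V_1$, contradicting $v \in V_2$ together with the disjointness $V_1 \cap V_2 = \emptyset$. (Here $N_G(v) = \emptyset$ is impossible since $G$ has no isolated vertices, and would in any case force $v \in V_1$.) Therefore $v$ has a neighbor $w \in V_1$.

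Finally I would assemble the structural conclusion. Having $w \in N_G(v) \cap V_1$, the edge $\{v, w\}$ is a clique of $G$ whose vertex other than $v$ lies in $V_1$, so the family of cliques that contain $v$ and have all remaining vertices in $V_1$ is nonempty; choosing maximum-sized members of this family, which live inside the blocks of $G$ containing $v$ because the maximal cliques of a block graph are exactly its blocks, yields $C_1, \dots, C_m$ with $m \geq 1$. Since each $C_i$ satisfies $C_i \setminus \{v\} \subseteq V_1$ and at least one of them contains $w$, we obtain $w \in S_v = \bigcup_{i=1}^m (C_i \setminus \{v\})$, whence $|S_v| \geq 1$.

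I would flag the contradiction in the second paragraph as the only genuine content; the remainder is unwinding definitions. The point to treat with care is that this argument uses nothing beyond $G'$ being an induced subgraph and the disjointness of the $V_i$, so the inequality in fact holds for any chordal graph, and the block-graph hypothesis is only needed to describe the $C_i$ as (sub-cliques of) blocks. I would also double-check the boundary case where $v$ is isolated in $G'$, which is ruled out by the no-isolated-vertices assumption, to confirm the argument has no gap.
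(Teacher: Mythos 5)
Your argument is correct. The paper states this as an Observation without any proof, and your contradiction argument --- if $v \in V_2$ had no neighbour in $V_1$, then $N_G(v) = N_{G'}(v)$ would induce a clique in the induced subgraph $G' = G \setminus V_1$ and hence in $G$, making $v$ simplicial in $G$ and so forcing $v \in V_1$, contradicting disjointness --- is exactly the justification the statement needs; you are also right that this step uses only that $G'$ is induced and the $V_i$ are disjoint, so it holds for all chordal graphs, with the block-graph hypothesis only serving to place the $C_i$ inside blocks. One small imprecision at the end: the vertex $w$ need not lie in any \emph{maximum}-sized clique $C_i$ (a larger clique through $v$ into $V_1$, necessarily inside a single block, could exclude $w$), but since every $C_i$ has size at least $|\{v,w\}| = 2$, each set $C_i \setminus \{v\}$ is nonempty and $|S_v| \geq 1$ follows regardless.
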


We now define an $n$-\emph{clique path graph}, denoted $ \cp(B_1, \ldots, B_n) $, as a graph consisting of a sequence of $n$ cliques $B_1,\ldots, B_n$, arranged linearly, with $n \geq 1$. Each consecutive pair of cliques $B_i$ and $B_{i+1}$ shares exactly one vertex, denoted $x_i$. The vertices $x_1, \ldots, x_{n-1}$ are referred to as the \emph{connecting vertices} of the $n$-clique path graph.

\begin{minipage}{\linewidth}
\begin{minipage}{0.33\linewidth}
\begin{figure}[H]
\begin{tikzpicture}[scale=0.45]
\draw [line width=1pt] (2,6)-- (1,5);
\draw [line width=1pt] (1,5)-- (2,4);
\draw [line width=1pt] (2,6)-- (3,5);
\draw [line width=1pt] (3,5)-- (2,4);
\draw [line width=1pt] (1,5)-- (3,5);
\draw [line width=1pt] (2,6)-- (2,4);
\draw [line width=1pt] (3,5)-- (4,4);
\draw [line width=1pt] (5,5)-- (4,4);
\draw [line width=1pt] (3,5)-- (5,5);
\draw [line width=1pt] (5,5)-- (7,5);
\draw [line width=1pt] (8,6)-- (7,5);
\draw [line width=1pt] (8,6)-- (9,5);
\draw [line width=1pt] (7,5)-- (7,4);
\draw [line width=1pt] (7,4)-- (8,3);
\draw [line width=1pt] (9,5)-- (9,4);
\draw [line width=1pt] (9,4)-- (8,3);
\draw [line width=1pt] (8,6)-- (8,3);
\draw [line width=1pt] (7,4)-- (9,4);
\draw [line width=1pt] (7,4)-- (9,5);
\draw [line width=1pt] (7,5)-- (9,5);
\draw [line width=1pt] (7,5)-- (9,4);
\draw [line width=1pt] (9,5)-- (8,3);
\draw [line width=1pt] (7,5)-- (8,3);
\draw [line width=1pt] (7,4)-- (8,6);
\draw [line width=1pt] (9,4)-- (8,6);
\draw [line width=1pt] (10,6)-- (9,5);
\draw [line width=1pt] (9,5)-- (11,5);
\draw [line width=1pt] (10,6)-- (11,5);
\draw (1.66,7.09) node[anchor=north west] {$B_1$};
\draw (3.78,6.51) node[anchor=north west] {$B_2$};
\draw (5.52,6.25) node[anchor=north west] {$B_3$};
\draw (7.68,7.15) node[anchor=north west] {$B_4$};
\draw (9.68,7.11) node[anchor=north west] {$B_5$};
\begin{scriptsize}
\draw [fill=black] (2,6) circle (2.5pt);
\draw [fill=black] (1,5) circle (2.5pt);
\draw [fill=black] (2,4) circle (2.5pt);
\draw [fill=black] (3,5) circle (2.5pt);
\draw[color=black] (3.02,4.54) node {$x_1$};
\draw [fill=black] (4,4) circle (2.5pt);
\draw [fill=black] (5,5) circle (2.5pt);
\draw[color=black] (5.1,4.58) node {$x_2$};
\draw [fill=black] (7,5) circle (2.5pt);
\draw[color=black] (6.52,4.7) node {$x_3$};
\draw [fill=black] (8,6) circle (2.5pt);
\draw [fill=black] (9,5) circle (2.5pt);
\draw[color=black] (9.44,4.68) node {$x_4$};
\draw [fill=black] (7,4) circle (2.5pt);
\draw [fill=black] (8,3) circle (2.5pt);
\draw [fill=black] (9,4) circle (2.5pt);
\draw [fill=black] (10,6) circle (2.5pt);
\draw [fill=black] (11,5) circle (2.5pt);
\end{scriptsize}
\end{tikzpicture}
\caption*{$\cp(B_1,B_2,B_3,B_4,B_5)$}
\end{figure}
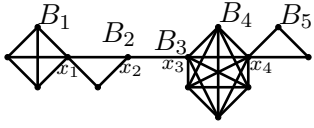
\end{minipage}
\begin{minipage}{0.64\linewidth}
For example, the graph shown left is a 5-clique path graph \( \operatorname{CP}(B_1, B_2, B_3, B_4, B_5) \), where \( B_1 \) is the clique on 4 vertices, \( B_2 \) is the clique on 3 vertices, \( B_3 \) is the clique on 2 vertices, \( B_4 \) is the clique on 6 vertices, and \( B_5 \) is the clique on 3 vertices. The vertices \( x_1, x_2, x_3, x_4 \) are the connecting vertices, which link each consecutive pair of cliques in this path.
\end{minipage}
\end{minipage}
\vskip 2mm
The following setup introduces key notation and assumptions that will be used in subsequent results. Using Observation~\ref{chordal-nota}, we define a parameter $\ell$ that plays a crucial role in the analysis.

\begin{setup}\label{allsetup}
    Let $G$ be a connected block graph. Then there exists a maximum-sized $n$-clique path, denoted $\cp(B_1, \ldots, B_n)$, as an induced subgraph with connecting vertices $x_1, x_2, \ldots, x_{n-1}$, where $n \geq 1$. Let $\hh$ be a $c'$-minor of $\con_r(G)$ for some $r \geq 1$, and assume that $|E(\hh)| \geq 3$. Using Observation~\ref{chordal-nota} and its associated notation, we define the parameter $\ell$ as:
    $\ell = \min \left\{ i \mid V_i \cap V(\hh) \neq \emptyset \right\}.$
\end{setup}

\begin{lemma}\label{not-edge}
    Assume the notation from Set-Up \ref{allsetup}, and suppose $\ell \neq 1$ and $r < n$. If $x_0, \ldots, x_{r-2} \notin V(\mathcal{H})$, where $x_0 \in V(B_1) \cap V_1$, then $x_{r-1} \in V(\mathcal{H})$. Furthermore, if $y \in N_G(x_{r-1}) \setminus \{x_{r-2}\}$, then $y \in V(\mathcal{H})$.
\end{lemma}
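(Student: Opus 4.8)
The plan is to work entirely in the contraction description of $\hh$. Since $\hh$ is a $c'$-minor, write $\hh = \con_r(G)/V_c$ for some $V_c \subseteq V(G)$, so that $V(\hh) = V(G) \setminus V_c$, and the hypotheses $x_0, \ldots, x_{r-2} \notin V(\hh)$ become $\{x_0, \ldots, x_{r-2}\} \subseteq V_c$. (Note that $\ell \neq 1$ already forces $x_0 \in V_1$ into $V_c$, so only $x_1, \ldots, x_{r-2} \in V_c$ is genuinely new; this is also why $r \geq 2$ is the meaningful range.) Proving $x_{r-1} \in V(\hh)$ and $y \in V(\hh)$ is exactly showing $x_{r-1} \notin V_c$ and $y \notin V_c$. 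Recall that $E(\hh)$ consists of the inclusion-minimal members of $\mathcal{M} := \{\,e \setminus V_c : e \in E(\con_r(G))\,\}$; the whole argument will hinge on reading off $E(\hh)$ from $\mathcal{M}$ via minimality.

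The first step is to exhibit one edge of $\con_r(G)$ that localizes the problem to the two vertices $x_{r-1}$ and $y$. Because consecutive cliques of the clique path share their connecting vertex, $x_i \sim x_{i+1}$ for $0 \leq i \leq r-2$ (the pair lies in the common clique $B_{i+1}$), so $x_0 x_1 \cdots x_{r-1}$ is a connected (indeed induced) path on $r$ vertices, and among these the only neighbour of $x_{r-1}$ is $x_{r-2}$. Since $r < n$, the vertex $x_{r-1}$ is a genuine connecting vertex, so $N_G(x_{r-1}) \setminus \{x_{r-2}\} \neq \emptyset$ (for instance $x_r$ lies in it). For any $y \in N_G(x_{r-1}) \setminus \{x_{r-2}\}$, the vertex $y$ is distinct from every path vertex (a path vertex adjacent to $x_{r-1}$ must be $x_{r-2}$), hence $f := \{x_0, x_1, \ldots, x_{r-1}, y\}$ has exactly $r+1$ distinct vertices; as $G[\{x_0, \ldots, x_{r-1}\}]$ is connected and $y \sim x_{r-1}$, the induced subgraph $G[f]$ is connected, so $f \in E(\con_r(G))$.

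The decisive step is a minimality/cardinality argument on $f \setminus V_c$. Since $\{x_0, \ldots, x_{r-2}\} \subseteq V_c$, we have $f \setminus V_c \subseteq \{x_{r-1}, y\}$, so $|f \setminus V_c| \leq 2$, and it remains to exclude the smaller cardinalities. First, $\emptyset \notin \mathcal{M}$: otherwise $\emptyset$ would be the unique inclusion-minimal element of $\mathcal{M}$, forcing $E(\hh) = \{\emptyset\}$ and $|E(\hh)| = 1$, contradicting $|E(\hh)| \geq 3$. Second, $|f \setminus V_c| \neq 1$: if $f \setminus V_c = \{w\}$, then, because $\emptyset \notin \mathcal{M}$, the singleton $\{w\}$ is inclusion-minimal in $\mathcal{M}$ and hence $\{w\} \in E(\hh)$, contradicting that a $c'$-minor has no singleton edges. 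Therefore $|f \setminus V_c| = 2$, which forces $f \setminus V_c = \{x_{r-1}, y\}$; that is, $x_{r-1}, y \notin V_c$, giving both $x_{r-1} \in V(\hh)$ and $y \in V(\hh)$. The unconditional claim $x_{r-1} \in V(\hh)$ follows by applying this to any fixed admissible $y$ (e.g. $y = x_r$).

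The main obstacle is precisely this last step: everything rests on correctly computing the edge set of a contraction through inclusion-minimality, and on the way the two structural hypotheses jointly pin down $|f \setminus V_c|$. The condition ``$\hh$ has no singleton edges'' (from being a $c'$-minor) kills the $1$-element case, while ``$|E(\hh)| \geq 3$'' kills the degenerate empty-edge case, and neither hypothesis alone suffices. By contrast, checking that $f$ is an honest edge of $\con_r(G)$ (the induced-path structure and the distinctness of $y$) is routine block-graph bookkeeping, and the hypotheses $\ell \neq 1$ (which guarantees $x_0 \in V_c$, keeping the hypothesis list consistent) and $r < n$ (which keeps $x_{r-1}$ a connecting vertex with an admissible neighbour $y$) serve only to make the configuration $f$ available.
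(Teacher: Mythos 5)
Your proof is correct, and for the second assertion it coincides with the paper's: both use the witness set $\{x_0,\ldots,x_{r-1},y\}$, verify that it is a connected set of size $r+1$ (hence an edge of $\con_r(G)$), and derive a forbidden singleton edge of $\mathcal{H}$. Where you genuinely diverge is the first assertion. The paper proves $x_{r-1}\in V(\mathcal{H})$ by a separate contradiction argument: assuming $x_{r-1}\notin V(\mathcal{H})$, it picks a vertex $c\in V(\mathcal{H})$ at minimum distance from $x_{r-1}$, appends the shortest path from $x_{r-1}$ to $c$ to the segment $x_0,\ldots,x_{r-1}$, and extracts from this connected set of size at least $r+1$ an edge meeting $V(\mathcal{H})$ only in $c$. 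You instead obtain both conclusions in one stroke from the single edge $f=\{x_0,\ldots,x_{r-1},y\}$ (taking $y=x_r$ for the unconditional claim): since $f\cap V(\mathcal{H})\subseteq\{x_{r-1},y\}$, and the cardinalities $0$ and $1$ are excluded by $|E(\mathcal{H})|\geq 3$ and the $c'$-minor condition respectively, the intersection must equal $\{x_{r-1},y\}$. This is cleaner: it avoids the closest-vertex and shortest-path machinery entirely, and it makes explicit, via the inclusion-minimality description of $E(\mathcal{H})$ as $\min\{e\setminus V_c : e\in E(\con_r(G))\}$, exactly why an intersection of size one is impossible --- a point the paper leaves implicit. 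The only ingredient you need beyond the paper's is the existence of an admissible $y$, which $r<n$ supplies via $y=x_r$; your block-graph bookkeeping showing $y\notin\{x_0,\ldots,x_{r-1}\}$ and $G[f]$ connected is the same verification the paper needs for its set $T$.
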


\begin{proof}
Suppose, for contradiction, that $x_{r-1} \notin V(\mathcal{H})$. Choose a vertex $c \in V(\mathcal{H})$ such that  
$\mathrm{d}(c, x_{r-1}) = \min\{\mathrm{d}(z, x_{r-1}) \mid z \in V(\mathcal{H})\}.$
 Set $\mathrm{d}(c, x_{r-1}) = t$. Then, there exists a shortest path  
$(x_{r-1} = y_0) y_1 \cdots y_{t-1}(y_t = c)$  
from $x_{r-1}$ to $c$. Since $c$ is the closest vertex in $V(\mathcal{H})$, all intermediate vertices $y_1, \ldots, y_{t-1}$ must lie outside $V(\mathcal{H})$.
Define  
$S = \{x_0, \ldots, x_{r-1}, y_1, \ldots, y_t\}.$  
Clearly, $|S| \geq r+1$ and $S$ is connected. Moreover, $S \cap V(\mathcal{H}) = \{c\}$. Since $c \in V(\mathcal{H})$, any $f \in E(\con_r(G))$ such that $f \subseteq S$ and $c \in f$ would imply $f \cap V(\mathcal{H}) = \{c\} \in E(\mathcal{H})$, contradicting the assumption that $\mathcal{H}$ is a $c'$-minor (i.e., no edge of $\con_r(G)$ intersects $V(\mathcal{H})$ in exactly one vertex). Hence, we must have $x_{r-1} \in V(\mathcal{H})$.

Now suppose $y \in N_G(x_{r-1}) \setminus \{x_{r-2}\}$, and assume, for contradiction, that $y \notin V(\mathcal{H})$. Then the set  
$T = \{x_0, \ldots, x_{r-1}, y\}$ 
is connected. Since $x_{r-1} \in V(\mathcal{H})$ and all other elements of $T$ lie outside $V(\mathcal{H})$, we have  
$T \cap V(\mathcal{H}) = \{x_{r-1}\}.$  
Thus, $T$ defines an edge in $\con_r(G)$ whose intersection with $V(\mathcal{H})$ is exactly $\{x_{r-1}\} \in E(\mathcal{H})$, again contradicting the $c'$-minor condition. Therefore, $y \in V(\mathcal{H})$.
\end{proof}

\begin{lemma}\label{tech-rs}
Assume the setup in \ref{allsetup} and the notation introduced in Observation \ref{block-nota}. Let $\ell = 2$, and suppose that $v_k \in V_2 \cap V(\hh)$ and $v_k \in e_1 \cap e_2$, where $e_1, e_2 \in E(\hh)$ and $e_1 \neq e_2$. Then, there exist edges $f_1, f_2 \in E(\con_r(G))$ such that the following conditions hold:
\[
S_{v_k} \subseteq f_i, \quad f_i \cap V(\hh) = e_i \quad \text{for } i = 1, 2,
\]
and
$f_1 \cup f_2 \setminus \left(S_{v_k} \cup \{v_k\}\right)$
is connected.
\end{lemma}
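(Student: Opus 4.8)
The plan is to realise each $e_i$ as the $V(\hh)$-trace of a carefully enlarged connected set of size $r+1$, and to control the final connectivity through the unique ``far'' block at $v_k$. Throughout I write $\hh = \con_r(G)/V_c$ with $V(\hh) = V(G)\setminus V_c$, so that a set $e\subseteq V(\hh)$ lies in $E(\hh)$ exactly when it is an inclusion-minimal member of $\{f\cap V(\hh):f\in E(\con_r(G))\}$. Since $\ell=2$ we have $V_1\cap V(\hh)=\emptyset$, hence $V_1\subseteq V_c$; as the cliques $C_1,\dots,C_m$ of Observation~\ref{block-nota} have all non-$v_k$ vertices in $V_1$, this gives $S_{v_k}\subseteq V_1\subseteq V_c$, so $S_{v_k}\cap e_i=\emptyset$ and any edge I build may contain $S_{v_k}$ without altering its trace. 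For each $i$ I fix a preimage $f_i'\in E(\con_r(G))$ with $e_i=f_i'\cap V(\hh)$ and $v_k\in e_i$ (note $|e_i|\ge 2$, so $e_i$ has a vertex other than $v_k$).

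First I would isolate the local structure at $v_k$. Since $v_k\in V_2$, it is simplicial in $G\setminus V_1$, so its non-$V_1$ neighbours form a clique; in a block graph a clique lies in a single block, so there is a \emph{unique} block $B^\ast$ of $v_k$ containing a non-$V_1$ vertex, while every other block of $v_k$ is an all-$V_1$ clique contributing to $S_{v_k}$. Hence deleting $v_k$ leaves one far component $R$ (containing $B^\ast\setminus\{v_k\}$ and everything beyond) together with isolated leaf-clique pieces, and every path from $v_k$ into $R$ must first enter $B^\ast$. Letting $L_i$ be the leaf-clique vertices of $f_i'$ (so $L_i\subseteq S_{v_k}$ since leaf vertices are simplicial and attach only to $v_k$), the far part $f_i'' := f_i'\setminus L_i$ is a connected set containing $v_k$ and all of $e_i$, with $f_i''\cap V(\hh)=e_i$. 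I then take $T_i\subseteq f_i''$ to be an inclusion-minimal connected set containing $e_i$; minimality forces every leaf of $T_i$ into $e_i$ and gives $T_i\cap V(\hh)=e_i$.

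The crux is the estimate $|T_i|+|S_{v_k}|\le r+1$, which I would prove by contradiction from minimality of $e_i$. The set $T_i\cup S_{v_k}$ is connected (each vertex of $S_{v_k}$ attaches to $v_k\in T_i$) with trace exactly $e_i$. If $|T_i|+|S_{v_k}|\ge r+2$, I repeatedly delete a non-$v_k$ leaf of the tree $T_i$ — each such leaf lies in $e_i$ — retaining $\{v_k\}\cup S_{v_k}$ and connectivity until the set has size exactly $r+1$. This yields a connected $(r+1)$-set, hence an edge of $\con_r(G)$, whose $V(\hh)$-trace is a proper subset of $e_i$, producing a strictly smaller edge of $\hh$ inside $e_i$ and contradicting $e_i\in E(\hh)$. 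With the estimate in hand I set $f_i:=T_i\cup S_{v_k}\cup P_i$, where $P_i\subseteq f_i''\setminus T_i\subseteq V_c$ is a connected enlargement of $T_i$ with $|f_i|=r+1$; enough far $V_c$-vertices are available because $|f_i''|=r+1-|L_i|\ge r+1-|S_{v_k}|$. Then $f_i$ is connected of size $r+1$, so $f_i\in E(\con_r(G))$, with $S_{v_k}\subseteq f_i$ and $f_i\cap V(\hh)=e_i$.

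Finally, $f_1\cup f_2\setminus(S_{v_k}\cup\{v_k\})$ equals $\big((T_1\cup P_1)\setminus\{v_k\}\big)\cup\big((T_2\cup P_2)\setminus\{v_k\}\big)$, whose vertices all lie in the far component $R$. Each piece $(T_i\cup P_i)\setminus\{v_k\}$ is connected, because inside the connected set $T_i\cup P_i\subseteq f_i''$ every vertex reaches $v_k$ through a neighbour of $v_k$ lying in $B^\ast$, and any two such neighbours are adjacent as $B^\ast$ is a clique; deleting $v_k$ therefore cannot separate them. For the same reason each piece contains a vertex of $B^\ast\setminus\{v_k\}$, and any such vertex from the first piece is adjacent to (or equal to) any such vertex from the second, so the two pieces are joined within the clique $B^\ast$ and the union is connected. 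I expect this last connectivity claim to be the main obstacle: it is simply false for a cut vertex with several far blocks, and it is rescued only by the fact that $v_k\in V_2$ forces the single far block $B^\ast$, which both keeps $R$ connected after deleting $v_k$ and supplies the clique through which the two far parts meet.
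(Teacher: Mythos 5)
Your proposal is correct and follows essentially the same strategy as the paper's proof: pad a preimage of $e_i$ with $S_{v_k}$ and invoke inclusion-minimality of $e_i$ to keep the trace equal to $e_i$, then use the fact that $v_k\in V_2$ forces all non-$V_1$ neighbours of $v_k$ into a single clique block to get connectivity of $f_1\cup f_2\setminus(S_{v_k}\cup\{v_k\})$. Your treatment of the last step is in fact more complete than the paper's, which only checks that one chosen non-$V_1$ neighbour from each $f_i$ is adjacent to the other, whereas you also verify that each piece $(f_i)\setminus(S_{v_k}\cup\{v_k\})$ remains connected and meets $B^{\ast}\setminus\{v_k\}$.
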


\begin{proof}
Let $S_{v_k} = S$. We first claim that there exists $f_1 \in E(\con_r(G))$ such that
$S \subseteq f_1$ and  $f_1 \cap V(\hh) = e_1$.
Since $\hh$ is a $c'$-minor of $\con_r(G)$, there exists an edge $f' \in E(\con_r(G))$ such that $f' \cap V(\hh) = e_1$. If $S \subseteq f'$, the claim is true. Otherwise, since every vertex of $S$ is connected to $v_k$, and $v_k \in f'$, the set $S \cup f'$ is connected, and we have $|S \cup f'| \geq r+1$. We can then choose a connected subset $f_1$ of $S \cup f'$ such that $S \subseteq f_1$ and $|f_1| = r+1$. Consequently, we have:
$f_1 \cap V(\hh) \subseteq f' \cap V(\hh) = e_1.$
If $f_1 \cap V(\hh) \subset e_1$, this would contradict the assumption that $e_1 \in E(\hh)$, since $f_1 \cap V(\hh)$ would then be a proper subset of $e_1$. Therefore, it must be that $f_1 \cap V(\hh) = e_1$.
Similarly, there exists $f_2 \in E(\con_r(G))$ such that
$S \subseteq f_2$ and  $f_2 \cap V(\hh) = e_2.$

Next, we claim that there exists $z \in f_1 \setminus (S \cup \{v_k\})$ such that $z \notin V_1$ and $z \in N_G(v_k)$. There must exist a vertex $t \in N_G(v_k)$ such that $t \notin V_1$, because if every vertex in $N_G(v_k)$ were in $V_1$, then $v_k$ would be in $V_1$, contradicting the assumption that $v_k \in V_2$. Since $e_1 \neq e_2$, there exists $x_1' \in e_1 \setminus e_2$, and hence $x_1' \in f_1$. Consider the shortest path $(v_k=a_0) a_1 \cdots a_{n-1}(a_n=x_1')$ in $G$, where $a_0, \dots, a_n \in f_1$. Then, $a_1 \in N_G(v_k)$, and if $a_1 \in V_1$, $a_1$ would be a simplicial vertex, implying the path $v_k a_2 \cdots a_{n-1} (a_n=x_1')$ would be a shorter path, leading to a contradiction. Therefore, $a_1 \notin V_1$, and hence $a_1 \notin S \cup \{v_k\}$. This proves the claim.
Similarly, there exists $z \in f_2 \setminus (S \cup \{v_k\})$ such that $z \notin V_1$ and $z \in N_G(v_k)$.

Finally, we claim that $f_1 \cup f_2 \setminus (S \cup \{v_k\})$ is connected. By the previous argument, there exist vertices $z_1 \in f_1 \setminus (S \cup \{v_k\})$ and $z_2 \in f_2 \setminus (S \cup \{v_k\})$, both of which are not in $V_1$ and are in $N_G(v_k)$. If $z_1 = z_2$, the claim follows immediately. Otherwise, if $z_1 \neq z_2$, there must be an edge between $z_1$ and $z_2$ because $v_k \in V_2$, which ensures that $z_1$ and $z_2$ are connected. Therefore, $f_1 \cup f_2 \setminus (S \cup \{v_k\})$ is connected, and the proof is complete.
\end{proof}

\begin{notation}\label{nota-tech-rs1}
   Assume the setup in \ref{allsetup} and the notation introduced in Observation~\ref{block-nota}. Let $\ell = 2$, and suppose that $v_k \in V_2 \cap V(\hh)$ and $v_k \in e_1 \cap e_2$, where $e_1, e_2 \in E(\mathcal{H})$ and $e_1 \neq e_2$. By Lemma~\ref{tech-rs}, there exist edges $f_1, f_2 \in E(\con_r(G))$ such that:
   \[
      S_{v_k} \subseteq f_i, \quad f_i \cap V(\mathcal{H}) = e_i, \quad \text{for } i = 1, 2,
   \]
   and the set
   $f_1 \cup f_2 \setminus (S_{v_k} \cup \{v_k\})$
   induces a connected subgraph.
   Next, fix an element $x_1' \in e_1 \setminus e_2$. For each \(1 \leq i \leq \gamma\), where \(\gamma \geq 1\), define a set
   $C_i = \{c_{i,1}, \ldots, c_{i,q_i}\} \subseteq V(G),$
   satisfying the following conditions:
\begin{equation}\label{cond}
      \left.
      \begin{array}{ll}
      \text{(i)} & C_i \text{ is connected}, \\
      \text{(ii)} & c_{i,j} \in N_G(x_1') \text{ for some } j \in \{1, \ldots, q_i\}, \\
      \text{(iii)} & C_i \cap V(\mathcal{H}) = \emptyset, \\
      \text{(iv)} & \dd(c_{i,j}, v_k) > \dd(v_k, x_1') \text{ for all } j \in \{1, \ldots, q_i\}.
      \end{array}
      \right\}
   \end{equation}
   
   Finally, define
   $C = \bigcup_{i=1}^{\gamma} C_i.$
\end{notation}

\begin{lemma}\label{tech-rs1}
 Let the notation be as in Notation~\ref{nota-tech-rs1}. Then $v_k$ is a simplicial vertex of $\mathcal{H}$ in either of the following cases: if $|f_1 \cup f_2 \setminus (S_{v_k} \cup \{v_k\})| \geq r+1$, or if $|f_1 \cup f_2 \setminus (S_{v_k} \cup \{v_k\})| \leq r$ and $|C| \geq |S_{v_k}|$.

\end{lemma}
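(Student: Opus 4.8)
The goal is to exhibit, for the two given edges $e_1,e_2\in E(\hh)$ through $v_k$, an edge $e_3\in E(\hh)$ with $e_3\subseteq(e_1\cup e_2)\setminus\{v_k\}$; this is precisely the condition for $v_k$ to be simplicial in $\hh$. The plan is to produce a connected subset $g\subseteq V(G)$ of size $r+1$ (hence an edge of $\con_r(G)$) with $v_k\notin g$ and $g\cap V(\hh)\subseteq(e_1\cup e_2)\setminus\{v_k\}$, and then take $e_3=g\cap V(\hh)$. Two preliminary facts drive the argument. First, since $\ell=2$ we have $V_1\cap V(\hh)=\emptyset$, and as $S_{v_k}\subseteq V_1$ (Observation~\ref{block-nota}) while $v_k\in V(\hh)$, writing $W:=f_1\cup f_2\setminus(S_{v_k}\cup\{v_k\})$ one computes
\[
W\cap V(\hh)=(e_1\cup e_2)\setminus\{v_k\}.
\]
By Lemma~\ref{tech-rs}, $W$ is connected, and since $C\cap V(\hh)=\emptyset$ by condition (iii), any connected $g\subseteq W\cup C$ with $v_k\notin g$ automatically satisfies $g\cap V(\hh)\subseteq(e_1\cup e_2)\setminus\{v_k\}$. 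Second, because $\hh$ is a $c'$-minor, no edge of $\con_r(G)$ meets $V(\hh)$ in a single vertex; hence if $g$ contains $x_1'\in e_1\setminus e_2\subseteq V(\hh)$ then $|g\cap V(\hh)|\geq 2$, and, exactly as in Lemma~\ref{first-sim}, $g\cap V(\hh)$ either is an edge of $\hh$ or properly contains one. Either way the desired $e_3$ is obtained. Note also that $x_1'\in W$ and $v_k\notin W\cup C$ (the latter using $\dd(c,v_k)>\dd(v_k,x_1')\geq 1$ from condition (iv)).

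It therefore suffices to find such a $g$ containing $x_1'$, and the two cases differ only in how the cardinality $r+1$ is reached. In the case $|W|\geq r+1$, since $W$ is connected and contains $x_1'$, I would grow a connected subtree of $G[W]$ from $x_1'$ to size exactly $r+1$ and take this to be $g$; here $C$ is not needed, and the verification above finishes the case.

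For the case $|W|\leq r$ and $|C|\geq|S_{v_k}|$, the idea is to pad $W$ with vertices of $C$. I would first record the exact size of $W$: each $f_i$ has size $r+1$ and contains the $|S_{v_k}|+1$ distinct vertices of $S_{v_k}\cup\{v_k\}$, so $|f_i\cap W|=r-|S_{v_k}|$; moreover $f_1\cap W\neq f_2\cap W$, for otherwise $f_1=(S_{v_k}\cup\{v_k\})\cup(f_1\cap W)=f_2$, forcing $e_1=e_2$, a contradiction. Hence $|W|\geq(r-|S_{v_k}|)+1$, so the deficit satisfies $(r+1)-|W|\leq|S_{v_k}|\leq|C|$. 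Since each $C_i$ is connected and has a vertex adjacent to $x_1'\in W$ (conditions (i),(ii)), the set $W\cup C$ is connected; and condition (iv), placing every vertex of $C$ strictly farther from $v_k$ than $x_1'$, is used to force $C\cap(f_1\cup f_2)=\emptyset$ in the tree-of-blocks geometry of $G$, so that $C$ contributes genuinely new vertices and $|W\cup C|\geq|W|+|C|\geq r+1$. I would then grow a connected $g\subseteq W\cup C$ from $x_1'$ to size exactly $r+1$ and conclude as above.

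The main obstacle is the disjointness claim $C\cap(f_1\cup f_2)=\emptyset$ together with the tight size bookkeeping: the hypothesis $|C|\geq|S_{v_k}|$ is exactly what is needed once the extra ``$+1$'' in $|W|$ is extracted from $e_1\neq e_2$, and in the extremal case the count collapses unless every vertex of $C$ is new. Establishing that $C$ avoids $f_1\cup f_2$ is where condition (iv) and the block structure must be used with care: one argues that a vertex of $f_i$ lying strictly beyond $x_1'$ would be separated from $v_k$ by a cut-vertex in a manner incompatible with the way $f_i$ was assembled around $S_{v_k}\cup\{v_k\}$ in Lemma~\ref{tech-rs}. Once disjointness and connectivity are secured, the extraction of $e_3$ and the verification of simpliciality are routine.
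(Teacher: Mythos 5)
Your Case 1 and the overall strategy (build a connected $(r+1)$-set $g$ avoiding $v_k$ with $g\cap V(\hh)\subseteq (e_1\cup e_2)\setminus\{v_k\}$) match the paper. The gap is in Case 2, and it is exactly where you flagged ``the main obstacle'': your cardinality count $|W\cup C|\ge |W|+|C|\ge r+1$ requires $C\cap W=\emptyset$, i.e.\ $C\cap(f_1\cup f_2)=\emptyset$, and this does not follow from conditions (i)--(iv). The cut-vertex argument you sketch only works for $f_2$: one takes $c\in C\cap f_2$, produces a $v_k$--$c$ path \emph{inside $f_2$}, which avoids $x_1'$ because $x_1'\notin f_2$, and contradicts the fact that $x_1'$ separates $c$ from $v_k$ (by condition (iv) and the block structure). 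For $f_1$ there is no such contradiction, since a $v_k$--$c$ path inside $f_1$ may legitimately pass through $x_1'\in f_1$. Indeed $C\cap f_1\neq\emptyset$ genuinely occurs in the paper's applications of this lemma: there one typically takes $C=S_{x_1'}$ (the simplicial neighbours of $x_1'$), and $f_1$ — a connected $(r+1)$-set containing $S_{v_k}\cup\{v_k\}$ with $f_1\cap V(\hh)=e_1\ni x_1'$ — is free to contain such vertices without changing $e_1$. When $C\cap f_1\neq\emptyset$ your count degrades to $|W\cup C|\ge r$, one short of what is needed.

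The paper avoids this by not using all of $W$: it sets $D'=f_2\setminus(S_{v_k}\cup\{v_k\})$ (so $|D'|=r-|S_{v_k}|$), proves only $D'\cap C=\emptyset$ by the cut-vertex argument, lets $R$ be the geodesic from $v_k$ to $x_1'$ inside $f_1$ (minus $v_k$), and counts $|D'\cup R\cup C|\ge |D'|+|C|+1\ge r+1$ using just $x_1'\notin D'\cup C$. Your argument is repairable along the same lines without changing your candidate set: since $x_1'\in W$, $x_1'\notin f_2$, and $x_1'\notin C$, and since $C\cap f_2=\emptyset$, the sets $f_2\cap W$, $\{x_1'\}$, and $C$ are pairwise disjoint subsets of $W\cup C$, giving $|W\cup C|\ge (r-|S_{v_k}|)+1+|C|\ge r+1$ regardless of whether $C$ meets $f_1$. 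But as written, with the disjointness from $f_1$ asserted as a needed step and an argument for it that cannot succeed, the proof does not go through.
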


\begin{proof}
(1) Suppose that $\left| (f_1 \cup f_2) \setminus (S_{v_k} \cup \{v_k\}) \right| \geq r+1$. Then, there exists a connected subset $f \subseteq (f_1 \cup f_2) \setminus (S_{v_k} \cup \{v_k\})$ with $|f| = r+1$. Let $e_3 = f \cap V(\mathcal{H})$; since $f$ avoids $\{v_k\}$, we have $e_3 \subseteq (e_1 \cup e_2) \setminus \{v_k\}$. If $e_3$ is itself an edge in $E(\mathcal{H})$, then $v_k$ must be a simplicial vertex. Otherwise, by the properties of the hypergraph $\mathcal{H}$, there exists a subset $e_3' \in E(\mathcal{H})$ such that $e_3' \subseteq e_3$, which again implies that $v_k$ is simplicial. In either case, the conclusion follows.

\vskip 1mm
\noindent
(2) Now assume $|A| \leq r$, where $A = f_1 \cup f_2 \setminus (S_{v_k} \cup \{v_k\})$. Let $S = S_{v_k}$ and define $D' = f_2 \setminus (S \cup \{v_k\})$. We claim that $D'$ is a connected set.
Let $\alpha, \beta \in D'$. If $\alpha$ and $\beta$ are not adjacent, since $\alpha, \beta \in f_2$, there exists a shortest path  
$\alpha \ y_1 \cdots y_p \ \beta$
within $f_2$, where each $y_i \in f_2$ for $1 \leq i \leq p$. Since $y_i$ is not a simplicial vertex of $G$ for all $1 \leq i \leq p$, we have $y_i \notin S \cup \{v_k\}$, ensuring that $D'$ remains connected.

Next, we claim that $D' \cap C = \emptyset$. Suppose, for contradiction, that there exists $c_{i,j} \in D' \cap C$, where $c_{i,j} \in C_i$ for some $1 \leq i \leq \gamma$, $1 \leq j \leq q_i$.
Since $v_k, x_1' \in f_1$, there exists a shortest path  
$(v_k = z_0) z_1 \cdots z_{t-1} (z_t = x_1')$
in $G$, where $z_i \in f_1$ for all $0 \leq i \leq t$. Let $B$ be the maximal clique containing $z_{t-1}$ and $x_1'$. Similarly, since $v_k, c_{i,j} \in f_2$, there exists a shortest path $P(v_k, c_{i,j})$ within $f_2$.
Since $c_{i,j} \in C_i$ and conditions (i) and (ii) from \eqref{cond} hold, there exists a shortest path  
$x_1' a_1 \cdots a_m c_{i,j}$,
where $a_p \in C_i$ for all $1 \leq p \leq m$. We observe that $a_1 \notin V(B)$, since otherwise, $\dd(v_k, a_1) \leq \dd(v_k, x_1')$, contradicting condition (iv) from \eqref{cond}.
Since $x_1'$ is a cut vertex, $a_1$ and $z_{t-1}$ belong to different components of $G \setminus \{x_1'\}$. However, the path  
$z_{t-1} \cdots v_k \ P(v_k, c_{i,j}) \ a_m \cdots a_1$
remains in $G \setminus \{x_1'\}$, which is a contradiction. Therefore, $D' \cap C = \emptyset$.

Since $D' \cap \{x_1'\} = \emptyset$ (because $x_1' \in e_2$), we have  
$|f_2| = r + 1 = |D'| + |S| + 1.$
Now, consider $D' \cup R \cup C$, where $R = \{z_1, \ldots, z_{t-1}, z_t\}$. We can show that $D' \cup R \cup C$ is connected. By computing its size, we obtain  
$|D' \cup R \cup C| \geq r + 1.$ 
Select a connected subset $f \subseteq D' \cup R \cup C$ with $|f| = r + 1$, and define  
$e = f \cap V(\mathcal{H}).$
This ensures that $e \subseteq e_1 \cup e_2 \setminus \{v_k\}$. If $e \in E(\mathcal{H})$, then $v_k$ is a simplicial vertex. Otherwise, there exists $e_s \in E(\mathcal{H})$ such that $e_s \subseteq e$, proving that $v_k$ is a simplicial vertex.
\end{proof}

\section{Shellability of Higher Independence Complexes} \label{con-chor}
This section establishes sufficient conditions on $r$ and on the graph $G$ under which the $r$-independence complex $\mathcal{I}_r(G)$ is shellable.
We begin by examining a claim from~\cite{FPSAA23}, where it was asserted that for any tree $G$, the hypergraph $\mathcal{C}_r(G)$ is $W$-chordal for all $r \geq 2$ (see~\cite[Theorem~2.7]{FPSAA23}). However, the proof presented there contains a flaw. In fact, we exhibit an explicit family of trees for which $\mathcal{C}_r(G)$ is not $W$-chordal. Moreover, for each $r \geq 4$, we construct a family of graphs $G_t$ such that the hypergraph $\mathcal{C}_r(G_t)$ fails to be $W$-chordal. The construction is given below.

\begin{cons}
    Let $H_1$, $H_2$, and $H_3$ be connected graphs with pairwise disjoint vertex sets, each satisfying $|V(H_i)| \geq t$ for $1 \leq i \leq 3$. Define a base graph $G$ with vertex set  
$V(G) = \{v_1, v_2, v_3, v_4, v_5, v_6, v_7\}$
and edge set  
\[
E(G) = \{\{v_1, v_2\}, \{v_2, v_3\}, \{v_1, v_4\}, \{v_4, v_5\}, \{v_1, v_6\}, \{v_6, v_7\}\}.
\]
 \begin{minipage}{\linewidth}
\begin{minipage}{0.3\linewidth}

 \begin{figure}[H]
\begin{tikzpicture}[scale=0.3]
\draw [line width=1pt] (16,5)-- (13,6);
\draw [line width=1pt] (16,5)-- (16,7);
\draw [line width=1pt] (16,5)-- (19,6);
\draw [line width=1pt] (13,6)-- (12,8);
\draw [line width=1pt] (16,7)-- (16,9);
\draw [line width=1pt] (19,6)-- (20,8);
\draw [line width=1pt] (15.854390243902438,11.526585365853657) circle (1.5754364474646385cm);
\draw [line width=1pt] (20.008935698447985,11.67470066518846) circle (1.6465297539904094cm);
\draw [line width=1pt] (11.724059268292685,11.588849512195122) circle (1.6188855999151566cm);
\draw [line width=1pt] (12,8)-- (11.96015521064307,10.365432372505534);
\draw [line width=1pt] (16,9)-- (16.016740576496748,10.558603104212851);
\draw [line width=1pt] (20,8)-- (19.966008869179692,10.429822616407973);
\draw (10.66991130820404,13.11470066518846) node[anchor=north west] {$H_1$};
\draw (14.676740576496741,13.107871396895777) node[anchor=north west] {$H_2$};
\draw (19.019179600887003,13.179090909090899) node[anchor=north west] {$H_3$};
\begin{scriptsize}
\draw [fill=black] (16,5) circle (2.5pt);
\draw[color=black] (16.17771618625284,4.471773835920173) node {$v_1$};
\draw [fill=black] (13,6) circle (2.5pt);
\draw[color=black] (13.37283813747234,6.459090909090904) node {$v_2$};
\draw [fill=black] (16,7) circle (2.5pt);
\draw[color=black] (16.57771618625284,7.367871396895781) node {$v_4$};
\draw [fill=black] (19,6) circle (2.5pt);
\draw[color=black] (19.861130820399197,6.259090909090904) node {$v_6$};
\draw [fill=black] (12,8) circle (2.5pt);
\draw[color=black] (12.564057649667457,8.455188470066512) node {$v_3$};
\draw [fill=black] (16,9) circle (2.5pt);
\draw[color=black] (16.57771618625284,9.263968957871389) node {$v_5$};
\draw [fill=black] (20,8) circle (2.5pt);
\draw[color=black] (20.669911308204078,8.455188470066512) node {$v_7$};
\draw [fill=black] (11.96015521064307,10.365432372505534) circle (2.5pt);
\draw[color=black] (12.142594235033311,10.816164079822606) node {$u_1$};
\draw [fill=black] (16.016740576496748,10.558603104212851) circle (2.5pt);
\draw[color=black] (16.19917960088699,11.009334811529923) node {$u_2$};
\draw [fill=black] (19.966008869179692,10.429822616407973) circle (2.5pt);
\draw[color=black] (20.148447893569934,10.880554323725045) node {$u_3$};
\end{scriptsize}
\end{tikzpicture}
\caption*{$G_t$}
\end{figure}
\end{minipage}
\begin{minipage}{0.65\linewidth}
The graph $G_t$ is formed by taking the disjoint union of $G$, $H_1$, $H_2$, and $H_3$, and then attaching each $H_i$ to $G$ via a single edge. Specifically, let $u_1 \in V(H_1)$, $u_2 \in V(H_2)$, and $u_3 \in V(H_3)$ be arbitrary vertices. Then the vertex and edge sets of $G_t$ are given by  
$V(G_t) = V(G) \cup V(H_1) \cup V(H_2) \cup V(H_3),$  
$E(G_t) = E(G) \cup E(H_1) \cup E(H_2) \cup E(H_3) \cup \{\{v_3, u_1\}, \{v_5, u_2\}, \{v_7, u_3\}\}.$
\end{minipage}
\end{minipage}
\end{cons}

\begin{proposition}\label{not-chordal}
If $t \geq r - 3$ for some $r \geq 4$, then the hypergraph $\con_r(G_t)$ is not $\ww$-chordal.
\end{proposition}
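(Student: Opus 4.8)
The plan is to prove Proposition~\ref{not-chordal} by exhibiting a single minor $\mathcal{H}$ of $\con_r(G_t)$ that contains no simplicial vertex; by definition this shows $\con_r(G_t)$ is not $\ww$-chordal. First I would reduce to the boundary case $|V(H_i)| = r-3$. Since $|V(H_i)| \geq t \geq r-3$ and each $H_i$ is connected, I can choose inside each $H_i$ a connected induced subgraph $H_i'$ on exactly $r-3$ vertices containing $u_i$ (grow a subtree of a spanning tree of $H_i$ rooted at $u_i$). Deleting the surplus vertices $V_d$ gives $G' := G_t \setminus V_d$, and by the identity $\con_r(G_t)\setminus V_d = \con_r(G')$ used in the proof of Proposition~\ref{del-ope}, $\con_r(G')$ is a minor of $\con_r(G_t)$; so it suffices to find a bad minor of $\con_r(G')$. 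I will write the three branches of $G'$ hanging off the cut vertex $v_1$ as $B_i = \{p_i, q_i\} \cup H_i'$, where $p_i = v_{2i}$, $q_i = v_{2i+1}$, so that $|B_i| = 2 + (r-3) = r-1$.

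The crux is the structural observation that \emph{every} edge of $\con_r(G')$ contains $v_1$ and meets at least two branches. Indeed, the branches are pairwise separated by the cut vertex $v_1$, so a connected subset avoiding $v_1$ lies in a single $B_i$ and has at most $r-1 < r+1$ vertices, while a connected subset inside $B_i \cup \{v_1\}$ has at most $r < r+1$ vertices. This is precisely where the hypothesis $t \geq r-3$ (reduced to equality) and $r \geq 4$ enter, and it is the heart of the argument.

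Next I would form the minor $\mathcal{H} := \con_r(G')\, /\, (H_1' \cup H_2' \cup H_3')$, contracting away all blob vertices and leaving vertex set $\{v_1, p_1, q_1, p_2, q_2, p_3, q_3\}$. The main computation is to identify the minimal edges of $\mathcal{H}$: I claim they are exactly the six sets $\{v_1, p_i, q_i, p_j\}$ with $i \neq j$. Each arises as the image of the genuine edge $B_i \cup \{v_1, p_j\}$ (of size $(r-1)+2 = r+1$). No strictly smaller image can occur, because reaching any blob vertex forces $q_i$ into the edge, reaching $q_i$ forces $p_i$ (as $p_i$ is a cut vertex), and the size identity $\sum_i |e \cap B_i| = r \geq 4$ rules out any image built only from singleton contributions such as $\{v_1, p_i, p_j\}$; images like $\{v_1,p_i,q_i,p_j,q_j\}$ strictly contain a listed edge and are discarded in passing to minimal sets. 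Getting this edge list correct and complete is the main obstacle of the proof.

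Finally, with this explicit $7$-vertex, $6$-edge hypergraph in hand, I would verify directly, exploiting the $3$-fold symmetry among branches, that no vertex is simplicial. For $v_1$, the pair $\{v_1,p_1,q_1,p_2\}$, $\{v_1,p_1,q_1,p_3\}$ has union minus $v_1$ equal to $\{p_1,q_1,p_2,p_3\}$, which contains no edge since every edge contains $v_1$. For $q_1$ (which lies in only those same two edges), the same union minus $q_1$ is $\{v_1,p_1,p_2,p_3\}$, containing no edge since every edge contains some $q_i$. For $p_1$, the pair $\{v_1,p_1,q_1,p_2\}$, $\{v_1,p_2,q_2,p_1\}$ gives $\{v_1,q_1,p_2,q_2\}$, which contains no edge because each edge needs two distinct $p$-vertices while this set has only $p_2$. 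By symmetry the analogous failures hold at $p_2,p_3,q_2,q_3$, so $\mathcal{H}$ has no simplicial vertex. Hence $\con_r(G_t)$ admits a minor with no simplicial vertex and is therefore not $\ww$-chordal, completing the proof.
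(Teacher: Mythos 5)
Your proof is correct and follows essentially the same route as the paper: both construct the same minor (delete all but $r-3$ vertices of each $H_i$, contract the rest) and arrive at the same $7$-vertex hypergraph with the six edges $\{v_1,p_i,q_i,p_j\}$, then check directly that it has no simplicial vertex. Your write-up is in fact more complete than the paper's, since you justify why the contracted hypergraph has exactly those six minimal edges and spell out the non-simpliciality check that the paper leaves to the reader.
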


\begin{proof}
For each $1 \leq i \leq 3$, choose a subset $S_i \subseteq V(H_i)$ with $|S_i| = r - 3$ such that the following subsets of $V(G_t)$ belong to $E(\con_r(G_t))$: 
$S_1 \cup \{v_1, v_2, v_3, v_4\}$, 
$S_1 \cup \{v_1, v_2, v_3, v_6\}$, 
$S_2 \cup \{v_5, v_4, v_1, v_6\}$, 
$S_2 \cup \{v_5, v_4, v_1, v_2\}$, 
$S_3 \cup \{v_7, v_6, v_1, v_4\}$, 
and $S_3 \cup \{v_7, v_6, v_1, v_2\}$.  

Let $S'_i = V(H_i) \setminus S_i$, and define $S = S'_1 \cup S'_2 \cup S'_3$. Let $\mathcal{H}$ be the hypergraph obtained from $\con_r(G_t)$ by deleting all vertices in $S$ and contracting the vertices in $S_1 \cup S_2 \cup S_3$. Since vertex deletion and contraction behave compatibly with $\con_r(G)$ (that is, $\con_r(G) \setminus x = \con_r(G \setminus x)$ for any vertex $x \in V(G)$), the resulting hypergraph $\mathcal{H}$ has vertex set 
$V(\mathcal{H}) = \{v_1, v_2, v_3, v_4, v_5, v_6, v_7\}$
and edge set 
$E(\mathcal{H}) =$ $ \{\{v_1, v_2, v_3, v_4\}$, $\{v_1, v_2, v_3, v_6\}$, $\{v_5, v_4, v_1, v_6\}$, $\{v_5, v_4, v_1, v_2\}$, $\{v_7, v_6, v_1, v_4\}$, $\{v_7, v_6, v_1, v_2\}\}$.
One can verify that $\mathcal{H}$ contains no simplicial vertices. Therefore, $\mathcal{H}$ is not $\ww$-chordal, and consequently, $\con_r(G_t)$ is not $\ww$-chordal.
\end{proof}

\begin{remark}\label{counter}
In~\cite{FPSAA23}, it was claimed that for any tree $G$, the hypergraph $\con_r(G)$ is $\ww$-chordal for all $r \geq 2$ (see~\cite[Theorem~2.7]{FPSAA23}), and consequently that the $r$-independence complex $\ind_r(G)$ is shellable. However, the argument provided there appears to be incomplete. Proposition~\ref{not-chordal} shows that for every $r \geq 4$, there exists a tree $G$ (constructed by taking each $H_i$ to be a tree for $i=1,2,3$) such that $\con_r(G)$ is \emph{not} $\ww$-chordal. This provides a counterexample to the claim and calls into question the corresponding shellability conclusion.
\end{remark}

The following theorem provides a sufficient condition for the $r$-independence complex $\ind_r(G)$ to be shellable when $r$ is close to the order of the graph.

\begin{theorem}\label{chordal-cond}
Let $G=(V(G),E(G))$ be a graph on $n=|V(G)| \geq 3$ vertices. If $n-2 \leq r \leq n-1$, then $\ind_r(G)$ is shellable.
\end{theorem}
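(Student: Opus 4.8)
The plan is to prove that the hypergraph $\con_r(G)$ is $\ww$-chordal for the two relevant values $r \in \{n-2, n-1\}$; since $\ind(\con_r(G)) = \ind_r(G)$, Theorem~\ref{russ-result}(2) then delivers shellability. The starting observation is a size constraint on edges: $\con_r(G)$ has vertex set $V(G)$ of cardinality $n$ and all edges of size $r+1$. For $r = n-1$ the only possible edge is the full set $V(G)$ (an edge precisely when $G$ is connected), while for $r = n-2$ every edge is a co-singleton $V(G)\setminus\{w\}$, arising exactly when $G\setminus w$ is connected. In both cases every edge has cardinality at least $(\text{number of vertices}) - 1$, which is the feature I will exploit.

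Accordingly, I would introduce the family $\mathcal{C}$ of simple hypergraphs $\mathcal{K}$ in which every edge has size at least $|V(\mathcal{K})| - 1$, so that $\con_r(G) \in \mathcal{C}$ for $r \in \{n-2,n-1\}$. The core of the argument is to show $\mathcal{C}$ is closed under the two minor operations. For a deletion $\mathcal{K}\setminus x$, an edge survives only if it avoids $x$; a full edge contains $x$, and a co-singleton $V(\mathcal{K})\setminus\{y\}$ avoids $x$ only when $y=x$, so at most the set $V(\mathcal{K})\setminus\{x\}$ survives, and that set is the entire vertex set of $\mathcal{K}\setminus x$. For a contraction $\mathcal{K}/x$, deleting $x$ from each edge and passing to inclusion-minimal elements converts each co-singleton $V(\mathcal{K})\setminus\{y\}$ with $y\neq x$ into the co-singleton $(V(\mathcal{K})\setminus\{x\})\setminus\{y\}$ of the smaller vertex set, while the full set (or $V(\mathcal{K})\setminus\{x\}$) becomes non-minimal and is discarded whenever any such co-singleton is present. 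In either operation the result again lies in $\mathcal{C}$. Since every minor is obtained by composing deletions and contractions, and the outcome is independent of their order by \cite{Se75}, every minor of $\con_r(G)$ lies in $\mathcal{C}$.

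It then remains to verify that each $\mathcal{K}\in\mathcal{C}$ has a simplicial vertex. If $\mathcal{K}$ has no edges this is vacuous; if the full vertex set $V(\mathcal{K})$ is an edge, simplicity forces it to be the unique edge, so every vertex lies in a single edge and is simplicial; otherwise all edges are co-singletons, and I choose $v$ with $e_3 := V(\mathcal{K})\setminus\{v\}\in E(\mathcal{K})$. For any two distinct edges $e_1,e_2$ containing $v$ one has $e_1\cup e_2 = V(\mathcal{K})$, hence $(e_1\cup e_2)\setminus\{v\} = e_3$, so $e_3\subseteq (e_1\cup e_2)\setminus\{v\}$ and $v$ is simplicial. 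Combining the three steps, every minor of $\con_r(G)$ contains a simplicial vertex, whence $\con_r(G)$ is $\ww$-chordal and $\ind_r(G)$ is shellable.

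The main obstacle I anticipate is the contraction step: one must follow the minimal-element operation carefully on a hypergraph that mixes a full edge with several co-singletons, confirming that no edge of size strictly below $|V|-1$ can ever be produced, so that membership in $\mathcal{C}$ is genuinely preserved. The deletion step and the existence of a simplicial vertex are comparatively routine; the only subtlety there is the boundary behaviour when the vertex count drops (a co-singleton can become the whole new vertex set after deletion), which the definition of $\mathcal{C}$ is designed to absorb, since it only requires edges to have size at least $|V|-1$.
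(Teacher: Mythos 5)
Your proof is correct and follows essentially the same strategy as the paper: reduce via Theorem~\ref{russ-result} to showing that $\con_r(G)$ is $\ww$-chordal, and exploit the fact that for $r = n-2$ any two distinct edges of $\con_r(G)$ have union $V(G)$ (the case $r=n-1$ being trivial since there is at most one edge). The only organizational difference is that you package the argument as an explicitly minor-closed class $\mathcal{C}$ of hypergraphs whose edges all have size at least $|V|-1$, verifying closure under deletion and contraction directly, whereas the paper handles an arbitrary minor by pulling its edges back to preimages in $\con_r(G)$; both routes land on the same witness edge contained in $(e_1 \cup e_2)\setminus\{v\}$.
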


\begin{proof}
By Theorem \ref{russ-result}, it is enough to prove that $\con_r(G)$ is $\ww$-chordal. If $r = n - 1$, then each edge of $\con_r(G)$ has size $n$, which implies that there can be at most one such edge. Hence, $\con_r(G)$ is trivially $\ww$-chordal.
Now suppose $r = n - 2$. Then each edge in $\con_r(G)$ has size $n - 1$. If $|E(\con_r(G))| \leq 2$, then again the hypergraph is trivially $\ww$-chordal. Assume $|E(\con_r(G))| \geq 3$. We first show that for any two distinct edges $f_1, f_2 \in \con_r(G)$, we have $|f_1 \cap f_2| = n - 2$.
Since $|f_1| = |f_2| = n - 1$, we have
$|f_1 \cup f_2| = 2(n - 1) - |f_1 \cap f_2|.$
But \(f_1 \cup f_2 \subseteq V(G)\), so 
$2(n - 1) - |f_1 \cap f_2| \leq n \Rightarrow |f_1 \cap f_2| \geq n - 2.$
Also, since \(f_1 \neq f_2\), we cannot have \(|f_1 \cap f_2| = n - 1\). Hence, \(|f_1 \cap f_2| = n - 2\), and so \(f_1 \cup f_2 = V(G)\).

Now let $\hh$ be any minor of $\con_r(G)$. We show that $\hh$ contains a simplicial vertex. If $|E(\hh)| \leq 2$, the result is immediate. Assume $|E(\hh)| \geq 3$. Let $e_1, e_2 \in E(\hh)$ with $e_1 \neq e_2$, and choose vertices $x_1' \in e_1 \setminus e_2$ and $x_2' \in e_2 \setminus e_1$.
We claim that $x_1'$ is a simplicial vertex in $\hh$. Let $e_1'$ and  $e_2'$ be two distinct edges in $ E(\mathcal{H})$ such that their intersection $e_1' \cap e_2'$ contains the vertex $x_1'$. Since $x_1' \in e_1 \setminus e_2$, it follows that $e_1', e_2' \neq e_2$. Let $f_1', f_2', g_2 \in E(\con_r(G))$ be preimages of $e_1', e_2', e_2$ under the minor operation, so that $f_i' \cap V(\hh) = e_i'$ for $i = 1, 2$ and $g_2 \cap V(\hh) = e_2$.
From earlier, we know $f_1' \cup f_2' = V(G)$, and  we have
$e_2 = g_2 \cap V(\hh) \subseteq (f_1' \cup f_2') \cap V(\hh) = e_1' \cup e_2' \subseteq V(\hh).$
Since $x_1' \notin e_2$, this implies $e_2 \subseteq (e_1' \cup e_2') \setminus \{x_1'\}$. Hence, $x_1'$ is a simplicial vertex of $\hh$.
Therefore, every minor of $\con_r(G)$ contains a simplicial vertex, and so $\con_r(G)$ is $\ww$-chordal.
\end{proof}

In the next lemma, we establish a connection between a cut vertex of the tree $G$ and a $c'$-minor of the graph $\con_r(G)$, where $r \geq 1$.  

\begin{lemma}\label{lower-lemma}
Let $G$ be a tree, and let $\mathcal{H}$ be a $c'$-minor of $\con_r(G)$ for some $r \geq 1$. Following the notation in Set-Up~\ref{allsetup}, assume that $\ell \neq 1$. Then there exists a cut vertex $v$ of $G$ such that $v \in V(\mathcal{H})$ and $V(\mathcal{H}) \setminus \{v\} \subseteq V(G_1)$, where $G_1$ is a connected component of $G \setminus \{v\}$.
\end{lemma}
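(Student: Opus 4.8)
The plan is to avoid the hypergraph machinery entirely and argue directly inside the tree $G$, using the minimal subtree spanning the surviving vertices. Since $\mathcal{H} = \con_r(G)/V_c$ is a $c$-minor, we have $V(\mathcal{H}) = V(G)\setminus V_c$, so $V(\mathcal{H})$ is simply an honest subset of $V(G)$. First I would record that $|V(\mathcal{H})|\geq 2$: by Set-Up~\ref{allsetup} we have $|E(\mathcal{H})|\geq 1$, and the $c'$-condition forbids singleton edges, so at least one edge has size $\geq 2$ and its vertices already lie in $V(\mathcal{H})$. (In fact $|E(\mathcal{H})|\geq 3$ forces $|V(\mathcal{H})|\geq 3$, since three distinct edges of size $\geq 2$ cannot live on two vertices, but $\geq 2$ is all I need.)

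Next, let $T'$ be the unique minimal subtree of $G$ containing $V(\mathcal{H})$, that is, the union of all paths in $G$ between pairs of vertices of $V(\mathcal{H})$. The central observation I would establish is that \emph{every leaf of $T'$ lies in $V(\mathcal{H})$}: if a leaf $w$ of $T'$ satisfied $w\notin V(\mathcal{H})$, then $T'-w$ would remain connected and would still contain all of $V(\mathcal{H})$, contradicting the minimality of $T'$. Since $T'$ has at least two vertices it has at least one leaf; choose one and call it $v$, so that $v\in V(\mathcal{H})$.

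I would then invoke the hypothesis $\ell\neq 1$. By the layering of Observation~\ref{chordal-nota}, $V_1$ is exactly the set of simplicial vertices of $G$, which for a tree are precisely its leaves, and $\ell\neq 1$ means $V_1\cap V(\mathcal{H})=\emptyset$. Hence $v\in V(\mathcal{H})$ cannot be a leaf of $G$, so $\deg_G(v)\geq 2$; and in a tree every vertex of degree at least two is a cut vertex. Thus $v$ is a cut vertex of $G$ lying in $V(\mathcal{H})$, as required. To pin down the component $G_1$, note that deleting the leaf $v$ from the tree $T'$ leaves the connected subtree $T'-v$, which contains $V(\mathcal{H})\setminus\{v\}$ (because $V(\mathcal{H})\subseteq V(T')$) and avoids $v$; being connected and disjoint from $v$, it sits inside a single connected component $G_1$ of $G\setminus\{v\}$, giving $V(\mathcal{H})\setminus\{v\}\subseteq V(T'-v)\subseteq V(G_1)$.

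I do not expect a serious obstacle: the only point demanding genuine care is the leaf-is-a-terminal observation and the translation of $\ell\neq 1$ into ``$v$ is not a leaf of $G$, hence a cut vertex.'' It is worth flagging that $v$ must be taken as a leaf of $T'$ rather than an arbitrary cut vertex of $G$ in $V(\mathcal{H})$: an interior branch vertex of $T'$ (for instance the centre of a spider, from which several surviving branches emanate) can fail the single-component conclusion, so the choice of an \emph{extremal} vertex of the spanning subtree is essential. Notably this route uses neither Lemma~\ref{not-edge} nor the full force of the $c'$-condition beyond $|V(\mathcal{H})|\geq 2$; the real inputs are that $G$ is a tree and that $V(\mathcal{H})$ avoids every leaf of $G$.
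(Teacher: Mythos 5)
Your proof is correct, and it takes a genuinely different route from the paper's. The paper picks $v$ in the lowest surviving layer $V_\ell$ of the simplicial-peeling from Observation~\ref{chordal-nota}: since $v$ is simplicial in $G \setminus \bigcup_{i<\ell} V_i$ and $G$ is a tree, $v$ has a unique neighbour $a$ there, and a path-length argument (comparing the forced tails $a_1\cdots a_{\ell-1}v$ and $x b_1\cdots b_r$ supplied by the layering) shows that every internal vertex of the $v$--$x$ path avoids $\bigcup_{s\le \ell} V_s$, so the path must begin with the edge $va$ and $x$ lands in the component of $a$. You instead take the Steiner subtree $T'$ of $G$ spanned by $V(\mathcal{H})$, observe that minimality forces every leaf of $T'$ to lie in $V(\mathcal{H})$, and choose such a leaf as $v$; the translation of $\ell\neq 1$ into ``no leaf of $G$ survives'' makes $v$ a non-leaf of the tree and hence a cut vertex, and the connectedness of $T'-v$ pushes all other surviving vertices into a single component of $G\setminus\{v\}$. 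Your argument is more elementary, avoids the layering bookkeeping entirely (using only that $V_1$ is the leaf set of $G$), and correctly identifies the essential hypotheses ($G$ a tree, $V(\mathcal{H})$ disjoint from the leaves, $|V(\mathcal{H})|\ge 2$, the last of which you justify from $|E(\mathcal{H})|\ge 3$ and the $c'$-condition). The paper's version produces a witness $v\in V_\ell\cap V(\mathcal{H})$ tied to the layering used elsewhere, but since Theorem~\ref{tree-lower} quantifies over \emph{all} vertices satisfying the lemma's conclusion, your existential witness is fully adequate for the downstream applications. Both proofs are valid.
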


\begin{proof}
Let $v \in V_\ell \cap V(\mathcal{H})$. Since $\mathcal{H}$ is a $c'$-minor of $\con_r(G)$, the vertex $v$ cannot appear as a singleton edge in $\mathcal{H}$, i.e., $\{v\} \notin E(\mathcal{H})$. Hence, $v$ must be adjacent to some vertex in $G \setminus \bigcup_{i=1}^{\ell-1} V_i$.  
As $v$ is simplicial in the induced subgraph $G \setminus \bigcup_{i=1}^{\ell-1} V_i$, its neighborhood in this subgraph is a singleton:
$N_{G \setminus \bigcup_{i=1}^{\ell-1} V_i}(v) = \{a\},$
for some $a \in V(G)$. Since $G$ is a tree and $v$ is a cut vertex, the deletion of $v$ produces at least two connected components. Let $G_1$ be the component containing $a$.
Now consider any vertex $x \in V(\mathcal{H}) \setminus \{v\}$. Because $G$ is a tree, there exists a unique path from $v$ to $x$, say
$v z_1 z_2 \cdots z_t x.$
Since $v \in V_\ell$, there exists a path $a_1 a_2 \cdots a_{\ell-1} v$ with $a_i \in V_i$ for $1 \leq i \leq \ell - 1$. Moreover, as $x \in V(\mathcal{H}) \setminus \{v\}$ and $x \in V_j$ for some $j \geq \ell$, there exists a path $x b_1 b_2 \cdots b_r$ with $r \geq \ell - 1$.  

Let $z_i$ be an internal vertex on the path from $v$ to $x$. Then both  
$a_1 a_2 \cdots a_{\ell-1} v z_1 \cdots z_{i-1} z_i$ and  $z_i z_{i+1} \cdots z_t x b_1 \cdots b_r$
are paths of length at least $\ell$. Consequently, $z_i \notin V_s$ for $1 \leq s \leq \ell$, which means all internal vertices lie outside $\bigcup_{s=1}^{\ell} V_s$.
Since $a$ is the unique neighbor of $v$ in $G \setminus \bigcup_{s=1}^{\ell-1} V_s$, the first edge on the path from $v$ to $x$ must be $v z_1 = va$, so $z_1 = a$. The remainder of the path connects $a$ to $x$ within $G \setminus \{v\}$, which shows that $x$ lies in the same connected component as $a$, namely $G_1$.  
As $x \in V(\mathcal{H}) \setminus \{v\}$ was arbitrary, we conclude that
$V(\mathcal{H}) \setminus \{v\} \subseteq V(G_1),$
as desired.
\end{proof}

We now extend Theorem \ref{chordal-cond} to establish a lower bound in the case when $G$ is a forest. Let $G$ be a tree, and let $\mathcal{H}$ be a $c'$-minor of $\con_r(G)$ for some $r \geq 1$. Consider the cut vertex $v$ of $G$ as given by Lemma~\ref{lower-lemma}, and let $G_1$ be the component of $G \setminus \{v\}$ that contains $V(\mathcal{H}) \setminus \{v\}$. Define $C_v = V(G) \setminus (V(G_1) \cup \{v\})$, ensuring that $C_v \cap V(\mathcal{H}) = \emptyset$.

\begin{theorem}\label{tree-lower}
Let $G$ be a forest. Then, for all $r \geq n - 5$, the complex $\ind_r(G)$ is shelable.
\end{theorem}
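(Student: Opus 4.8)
The plan is to exhibit forests of bounded order as a hierarchy and reduce, via Proposition~\ref{del-ope}, to a purely local simplicial-vertex statement, then to exploit that $r \ge n-5$ forces every edge of $\con_r(G)$ to omit at most four vertices. Fix $r$ and let $\F$ denote the collection of all forests $H$ with $|V(H)| \le r+5$; deleting a vertex from such a forest gives a forest on at most $r+4 \le r+5$ vertices, so $\F$ is a hierarchy. By Proposition~\ref{del-ope} together with Theorem~\ref{russ-result}, it suffices to show that every $c'$-minor of $\con_r(G)$ has a simplicial vertex for each $G \in \F$; this makes $\con_r(G)$ a $\ww$-chordal hypergraph and hence $\ind_r(G)$ shellable. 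Two reductions streamline the work. First, Theorem~\ref{chordal-cond} already settles $r \in \{n-1,n-2\}$, so only $n-5 \le r \le n-3$ is genuinely new. Second, since every edge of $\con_r(G)$ is a connected set, any vertex of a $c'$-minor $\hh$ lying in no edge is vacuously simplicial; using this (and a short componentwise argument, as edges are confined to single components) I may assume $G$ is connected, i.e. a tree.

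Next I would run the case analysis on a $c'$-minor $\hh$, assuming $|E(\hh)| \ge 3$, since otherwise some vertex lies in at most one edge and is simplicial. In the notation of Set-Up~\ref{allsetup}, consider the index $\ell$. If $\ell = 1$, a leaf of $G$ survives in $V(\hh)$, and Lemma~\ref{first-sim} shows it stays simplicial in $\hh$. If $\ell \ge 2$, I would invoke Lemma~\ref{lower-lemma} to obtain a cut vertex $v \in V(\hh)$, a component $G_1$ of $G \setminus \{v\}$ with $V(\hh)\setminus\{v\}\subseteq V(G_1)$, and the set $C_v = V(G)\setminus(V(G_1)\cup\{v\})$ with $C_v \cap V(\hh)=\emptyset$. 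Thus the whole of $\hh$ lives on $V(G_1)\cup\{v\}$ while all of $C_v$ has been contracted.

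The crux is to prove that $v$ is simplicial in $\hh$. Given $e_1,e_2\in E(\hh)$ with $v\in e_1\cap e_2$ and $e_1\ne e_2$, I would choose preimages $f_1,f_2\in E(\con_r(G))$ with $f_i\cap V(\hh)=e_i$ and $|f_i|=r+1$. Since $G$ is a tree, $v$ has a unique neighbor $a$ in $G_1$, and each $A_i:=f_i\cap V(G_1)$ is a nonempty connected subtree through $a$, so $A_1\cup A_2$ is connected in $G_1$. It then suffices to find a connected $f\subseteq V(G_1)$ with $|f|=r+1$ that avoids $v$ and meets $V(\hh)$ only within $(e_1\cup e_2)\setminus\{v\}$: setting $e_3:=f\cap V(\hh)$ gives $e_3\subseteq (e_1\cup e_2)\setminus\{v\}$, whence some edge of $\hh$ lies inside $(e_1\cup e_2)\setminus\{v\}$. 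To build $f$, I would pick the preimages so as to keep as much of their mass in $G_1$ as possible, enlarge the connected set $A_1\cup A_2$ within the tree $G_1$ using the contracted vertices of $G_1$ (invisible to $\hh$, so they never enlarge $e_3$), and verify by a count that the number of vertices of $G_1$ available for $f$ equals $m+\bigl|(e_1\cup e_2)\setminus\{v\}\bigr|$, where $m$ is the number of contracted vertices inside $G_1$.

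The main obstacle is precisely this packing step: guaranteeing that $G_1$ contains a connected $(r+1)$-set avoiding $v$ and all visible vertices outside $e_1\cup e_2$, with the available vertices also forming a connected (Steiner) subtree of $G_1$. This is where $r\ge n-5$ is decisive—each $f_i$ omits at most $n-(r+1)\le 4$ vertices, so the configuration has bounded deficiency, and I expect to close the argument by a short sub-case analysis according to $r\in\{n-5,n-4,n-3\}$ (equivalently, the number of omitted vertices) combined with an optimal, mass-concentrating choice of $f_1,f_2$. That the threshold is sharp is confirmed by Proposition~\ref{not-chordal}, which produces, for every $r\ge 4$, trees with $n>r+5$ whose $\con_r$ fails to be $\ww$-chordal; so the count must break exactly once the deficiency exceeds four, as it does.
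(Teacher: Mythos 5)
Your setup matches the paper's: you reduce via the hierarchy of forests and Proposition~\ref{del-ope} to showing every $c'$-minor $\hh$ of $\con_r(G)$ has a simplicial vertex for a tree $G$, dispose of $\ell=1$ with Lemma~\ref{first-sim}, and for $\ell\geq 2$ aim to show a cut vertex supplied by Lemma~\ref{lower-lemma} is simplicial. But the proof has a genuine gap exactly at the step you yourself flag as ``the main obstacle'': you never actually produce the connected $(r+1)$-set $f\subseteq V(G_1)$ avoiding $v$ and all visible vertices outside $e_1\cup e_2$, and the counting heuristic you offer does not suffice. Knowing that enough vertices are ``available'' (contracted vertices of $G_1$ plus $(e_1\cup e_2)\setminus\{v\}$) does not give a \emph{connected} $(r+1)$-subset in a tree; the deficit is quantitative: if the preimages are chosen with $C_v\subseteq f_i$, then $(f_1\cup f_2)\cap V(G_1)$ has size only about $r+1-|C_v|$, and one must find at least $|C_v|$ further contracted vertices of $G_1$ attached to this subtree. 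Nothing in your argument guarantees they exist.

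The paper closes this gap with two ideas your proposal is missing. First, it does not take an arbitrary $v$ from Lemma~\ref{lower-lemma}: it forms the set $S$ of \emph{all} vertices satisfying that lemma and picks $a_k\in S$ minimizing $|C_{a_k}|$. Second, choosing $x_1\in e_1\setminus e_2$ and $x_2\in e_2\setminus e_1$ at maximal distance from $a_k$, it proves that at least one of them, say $x_1$, also lies in $S$ --- and \emph{this} is where $r\geq n-5$ enters: if neither did, one exhibits five distinct vertices $x_1',x_1'',x_2,x_2',x_2''$ all missing from $f_1$, contradicting $|f_1|=r+1\geq n-4$. Then the invisible pendant set $D_1=C_{x_1}$ can be grafted onto the path from $a$ to $x_1$ inside $f_1$ together with $D=f_2\setminus(C_{a_k}\cup\{a_k\})$, and the minimality of $|C_{a_k}|$ gives $|D_1|\geq|C_{a_k}|$, so $|D|+|D_1|+1\geq |D|+|C_{a_k}|+1=r+1$, which is precisely the count that makes the packing work. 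Your proposal, by contrast, leaves both the source of the extra connected mass and the role of the hypothesis $r\geq n-5$ unidentified, deferring them to an unexecuted ``sub-case analysis''; as stated it is an outline of the right strategy rather than a proof.
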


\begin{proof}
By Theorem \ref{russ-result}, it is enough to prove that $\con_r(G)$ is $\ww$-chordal.
Since forest graphs form a hierarchy, and by Proposition~\ref{del-ope}, it suffices to prove that every $c'$-minor of $\con_r(G)$ contains a simplicial vertex when $G$ is a connected graph. Let $\mathcal{H}$ be a $c'$-minor of $\con_r(G)$, and use the notation from Set-Up~\ref{allsetup}. If $\ell = 1$, then by Lemma~\ref{first-sim}, $\mathcal{H}$ contains a simplicial vertex. Hence, assume that $\ell \neq 1$.
Define the set of vertices
$S = \{a \in V(G) \mid \text{$a$ satisfies Lemma~\ref{lower-lemma}}\} = \{a_1, \dots, a_m\}.$
Let $a_k \in S$ be such that
$|C_{a_k}| = \min \{\, |C_{a_i}| \mid i = 1, \dots, m \, \}.$
We claim that $a_k$ is a simplicial vertex of $\mathcal{H}$.
Suppose that $a_k \in e_1 \cap e_2$ for distinct $e_1, e_2 \in E(\mathcal{H})$. By an argument similar to Lemma~\ref{tech-rs}, we can choose $f_1, f_2 \in E(\con_r(G))$ such that
$C_{a_k} \subseteq f_i \quad \text{and} \quad e_i = f_i \cap V(\mathcal{H}) \quad \text{for } i = 1, 2.$
Next, choose
$x_1 \in e_1 \setminus e_2$ and $x_2 \in e_2 \setminus e_1$
such that
$\dd(x_1, a_k) \geq \dd(x, a_k)$ for all  $x \in e_1 \setminus e_2$ and $\dd(x_2, a_k) \geq \dd(y, a_k)$ for all  $y \in e_2 \setminus e_1$.
Since $a_k \in f_1$ and $x_1 \in f_1$, there exists a path $a_k y_1 \cdots y_p x_1$ in $G$ with $y_i \in f_1$. Similarly, there exists a path $a_k z_1 \cdots z_q x_2$ with $z_j \in f_2$. 
If $y_1 \neq z_1$, then $x_1$ and $x_2$ lie in different connected components of $G \setminus a_k$. However, this contradicts the fact that $x_1$ and $x_2$ belong to the same connected component of $G \setminus a_k$.
Therefore, we must have $y_1 = z_1$, and we denote this common vertex by $a$. This implies $a \in f_1 \cap f_2$.

We now claim that either $x_1$ or $x_2$ belongs to $S$. Suppose, for contradiction, that neither $x_1$ nor $x_2$ belongs to $S$. For $i = 1, 2$, define
\[
D_i = \{\, y \in (V(G)\setminus x_i) \mid \text{the induced path } P(y, a_k) \text{ contains } x_i \text{ as an internal vertex} \, \}.
\]
Note that $D_1 \cap f_2 = \emptyset$ and $D_2 \cap f_1 = \emptyset$; otherwise, $x_1 \in e_2$ or $x_2 \in e_1$, contradicting the choice of $x_1$ and $x_2$. If $D_i \cap V(\mathcal{H}) = \emptyset$ for both $i = 1, 2$, then, since $x_1$ and $x_2$ are cut vertices satisfying Lemma~\ref{lower-lemma}, we must have $x_1, x_2 \in S$, a contradiction. Hence, $D_i \cap V(\mathcal{H}) \neq \emptyset$ for both $i$.
Let $x_i' \in D_i \cap V(\mathcal{H})$ for $i = 1, 2$. 
Moreover, $x_1' \notin f_1$ by the maximality condition on $x_1$.
Since $x_i' \in V(\mathcal{H})$ and $\ell \neq 1$, there exists $x_i'' \in N_G(x_i')$ such that
$\dd(x_i'', a_k) > \dd(x_i', a_k) \text{ for }i=1,2.$
Also, $x_2', x_2'' \in D_2$, and since $D_2 \cap f_1 = \emptyset$, we have $x_2', x_2'' \notin f_1$.
If $x_1'' \in f_1$, then $x_1' \in f_1$, a contradiction. Thus, $x_1'' \notin f_1$. This implies that $f_1$ misses at least five vertices: $x_1', x_1'', x_2, x_2', x_2'' \notin f_1$, contradicting the assumption $|f_1| = r + 1 \geq n - 4$. Therefore, either $x_1 \in S$ or $x_2 \in S$. Without loss of generality, assume $x_1 \in S$. 

Let $D_1 = C_{x_1}$. Define
$D = f_2 \setminus (C_{a_k} \cup \{a_k\}).$
By construction of $C_{a_k}$, we have $D \subseteq G_1$, where $G_1$ is the connected component of $G \setminus a_k$ containing $V(\mathcal{H}) \setminus \{a_k\}$.
Now, we prove that the set
$S' = D \cup \{a, y_2, \ldots, y_p, x_1\} \cup D_1$
is connected. Let $\alpha, \beta \in D$. Then there exists a path in $G_1$, hence in $G$, hence in $f_2$, and no internal vertex lies in $C_{a_k} \cup \{a_k\}$, so $D$ is connected. Let $c \in D_1$ and $d \in D$. Then there exists a path $P(d,a)$ in $f_2 \setminus (C_{a_k} \cup \{a_k\})$, and a path $a y_2 \cdots y_p x_1 r_1 \cdots r_t c$ where $r_i \in D_1$. Thus, $S'$ is connected.

Observe that $f_2 = D \cup C_{a_k} \cup \{a_k\}$, so
$r + 1 = |D| + |C_{a_k}| + 1.$
Also, $|S'| \geq r + 1$, so we can choose $f \in E(\con_r(G))$ such that $f \subseteq S'$. Set $e = f \cap V(\mathcal{H})$. Clearly, $e \subseteq e_1 \cup e_2 \setminus \{a_k\}$. If $e \in E(\mathcal{H})$, we are done. Otherwise, there exists $e_s \in E(\mathcal{H})$ such that $e_s \subseteq e_1 \cup e_2 \setminus \{a_k\}$. This proves that $a_k$ is a simplicial vertex.
\end{proof}

We now turn to the case $r=2$, where a clean structural condition on $G$ guarantees the shellability of $\ind_2(G)$.

\begin{theorem}\label{block-2}
   If $G$ is a block graph, then $\ind_2(G)$ is shellable.
\end{theorem}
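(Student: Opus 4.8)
The plan is to apply Theorem~\ref{russ-result}, so it suffices to show that $\con_2(G)$ is $\ww$-chordal. Since the class of block graphs is closed under vertex deletion (an induced subgraph of a diamond-free chordal graph is again diamond-free and chordal), it forms a hierarchy, and Proposition~\ref{del-ope} reduces the problem to proving that every $c'$-minor $\hh$ of $\con_2(G)$ contains a simplicial vertex, where we may take $G$ connected. Here every edge of $\con_2(G)$ is a connected $3$-subset of $V(G)$ (a path $P_3$ or a triangle). If $|E(\hh)| \le 2$ the claim is immediate, so assume $|E(\hh)| \ge 3$ and adopt the notation of Set-Up~\ref{allsetup}, including the parameter $\ell$. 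When $\ell = 1$ some simplicial vertex of $G$ survives in $V(\hh)$, and Lemma~\ref{first-sim} shows it stays simplicial in $\hh$; hence the real work is the case $\ell \ge 2$.

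For the case $\ell = 2$ I would take $v_k \in V_2 \cap V(\hh)$ lying in two distinct edges $e_1, e_2 \in E(\hh)$ (if $v_k$ lies in at most one edge it is simplicial by definition), and invoke Lemma~\ref{tech-rs} to obtain $f_1, f_2 \in E(\con_2(G))$ with $S_{v_k} \subseteq f_i$, $f_i \cap V(\hh) = e_i$, and $(f_1 \cup f_2)\setminus(S_{v_k}\cup\{v_k\})$ connected, where $S_{v_k}$ is as in Observation~\ref{block-nota}. The decisive simplification at $r = 2$ is a size count: each $f_i$ has exactly $3$ vertices and contains $\{v_k\} \cup S_{v_k}$ with $|S_{v_k}| \ge 1$, so $|f_i \setminus (S_{v_k}\cup\{v_k\})| = 2 - |S_{v_k}| \le 1$ and therefore $|(f_1\cup f_2)\setminus(S_{v_k}\cup\{v_k\})| \le 2 = r$. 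Thus we are automatically in the second regime of Lemma~\ref{tech-rs1}, and proving that $v_k$ is simplicial in $\hh$ reduces to exhibiting the set $C$ of Notation~\ref{nota-tech-rs1} with $|C| \ge |S_{v_k}|$.

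To build $C$ I would, for each vertex of $S_{v_k}$, locate a distinct vertex lying beyond $x_1' \in e_1 \setminus e_2$ in the block structure -- connected to $x_1'$, outside $V(\hh)$, and strictly farther from $v_k$ than $x_1'$ -- so that conditions (i)--(iv) of \eqref{cond} hold; the maximality of the clique path $\cp(B_1,\dots,B_n)$ in Set-Up~\ref{allsetup} is exactly what should guarantee that enough such vertices exist to force $|C| \ge |S_{v_k}|$, given that the vertices of $S_{v_k}$ are simplicial vertices of $G$ lying in $V_1 \subseteq V_c$ and hence absent from $V(\hh)$. For $\ell \ge 3$, I would replay this argument after passing to the induced block subgraph $G^{\ast} = G \setminus \bigcup_{i=1}^{\ell-2} V_i$, in which $V_\ell$ becomes the second layer of simplicial vertices and the role of $S_{v_k}$ is taken by the neighbours of $v_k$ in the deleted layers; alternatively, since Lemma~\ref{not-edge} applies whenever $\ell \ne 1$ and $n \ge 3$ (with the tiny cases $n \le 2$, where $G$ is a single clique or a star-clique graph, handled directly via Corollary~\ref{complete}), it pins down the connecting vertex $x_1 \in V(\hh)$ together with $N_G(x_1)\setminus\{x_0\} \subseteq V(\hh)$ and furnishes an alternative candidate simplicial vertex. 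I expect the counting step $|C| \ge |S_{v_k}|$ -- and, relatedly, checking that the $\ell = 2$ machinery transfers faithfully to $V_\ell$ for $\ell \ge 3$ -- to be the main obstacle, since it is precisely here that the global block structure, rather than merely local adjacency, must be exploited.
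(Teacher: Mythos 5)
Your overall skeleton matches the paper's (reduction via Theorem~\ref{russ-result}, the hierarchy property and Proposition~\ref{del-ope}, Lemma~\ref{first-sim} for $\ell=1$, the small cases $n\leq 2$ via Corollary~\ref{complete}, and the observation that $r=2$ forces the second regime of Lemma~\ref{tech-rs1}), and your size count can even be sharpened: if $|S_{v_k}|\geq 2$ then two vertices of $S_{v_k}$ together with $v_k$ form a connected $3$-set meeting $V(\hh)$ only in $v_k$, producing a singleton edge and contradicting the $c'$-minor hypothesis, so in fact $|S_{v_k}|=1$. The genuine gap is exactly where you flag it: you take $v_k\in V_2\cap V(\hh)$ \emph{arbitrary} and hope to build the set $C$ of Notation~\ref{nota-tech-rs1}, but for an arbitrary such $v_k$ no valid $C$ need exist and $v_k$ need not be simplicial in $\hh$ at all. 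Concretely, let $G$ be the block graph consisting of a triangle $\{v,a,b\}$ with a pendant vertex $s$ at $v$ and pendant paths $a\,a_1\,a_2$ and $b\,b_1\,b_2$, and let $\hh$ be the $c'$-minor of $\con_2(G)$ obtained by contracting $V_1=\{s,a_2,b_2\}$. Then $\ell=2$ and $v\in V_2\cap V(\hh)$ lies in the two edges $\{v,a\}$ and $\{v,b\}$ of $\hh$, but every connected $3$-set containing both $a$ and $b$ meets $V(\hh)$ in a third vertex, so no edge of $\hh$ is contained in $\{a,b\}$ and $v$ is \emph{not} simplicial; correspondingly every neighbour of $a$ or $b$ at distance greater than $1$ from $v$ lies in $V(\hh)$, so condition (iii) of \eqref{cond} cannot be met. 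The simplicial vertices of this $\hh$ are $a_1$ and $b_1$, i.e.\ the extreme connecting vertices of the maximum clique path.

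This is precisely why the paper does not take $v_k$ arbitrary: it uses Lemma~\ref{not-edge} (applicable since $r=2<n$ and $x_0\in V(B_1)\cap V_1$ is absent from $V(\hh)$ when $\ell\neq 1$) to pin down $x_1\in V(\hh)$ together with $N_G(x_1)\setminus\{x_0\}\subseteq V(\hh)$, shows $|B_1|=2$ (otherwise $\{x_1\}$ would be a singleton edge), so that $S_{x_1}=\{x_0\}$, and then exploits the maximality of $\cp(B_1,\dots,B_n)$ to guarantee that every $\beta_j\in V(B_2)\setminus\{x_1,x_2\}$ carries a simplicial neighbour $z_j\in V_1$, which is exactly the single vertex needed for $C=\{z_j\}$. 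Choosing $x_1$ is the mechanism that makes $C$ exist, not a convenience. Note also that your separate $\ell\geq 3$ branch is both unnecessary and, as written, unsound: $\hh$ is a minor of $\con_2(G)$, not of $\con_2(G^{\ast})$ with $G^{\ast}=G\setminus\bigcup_{i=1}^{\ell-2}V_i$, so its edges arise from connected $3$-sets of $G$ passing through the deleted layers, and Lemmas~\ref{tech-rs} and~\ref{tech-rs1} are only formulated for $\ell=2$. The paper avoids the issue entirely: since $x_1$ is a cut vertex whose non-$V_1$ neighbours form the clique $B_2\setminus V_1$ (again by maximality of the clique path), we have $x_1\in V_2$, and Lemma~\ref{not-edge} then forces $V_2\cap V(\hh)\neq\emptyset$, i.e.\ $\ell=2$, whenever $\ell\neq 1$ and $n\geq 3$. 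The correct fix for your last branch is therefore to prove it is vacuous rather than to relativize the argument.
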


\begin{proof}
By Theorem \ref{russ-result}, it is enough to prove that $\con_2(G)$ is $\ww$-chordal.
Block graphs admit a hierarchical structure. By Proposition~\ref{del-ope}, it suffices to show that every $c'$-minor of $\con_2(G)$ contains a simplicial vertex, assuming $G$ is connected.
Let $\hh$ be a $c'$-minor of $\con_2(G)$. Without loss of generality, assume $G$ is connected and adopt the notation from Set-up~\ref{allsetup}.  
If $1 \leq n \leq 2$, then $G$ is a star-clique graph, and by Corollary~\ref{complete}, $\hh$ has a simplicial vertex. Now assume $n \geq 3$.
If $\ell = 1$, then Lemma~\ref{first-sim} implies that $\hh$ has a simplicial vertex. So suppose $\ell \neq 1$. In particular, $x_0 \notin V(\hh)$, where $x_0 \in V(B_1) \cap V_1$.
By Lemma~\ref{not-edge}, $x_1 \in V(\hh)$, and since $x_2 \in N_G(x_1) \setminus \{x_0\}$, we also have $x_2 \in V(\hh)$.

We now show that $|B_1| = 2$. If $|B_1| \geq 3$, then $\{x_1\}$ would be an edge of $\hh$, contradicting the definition of a $c'$-minor. So $|B_1| = 2$.
If $V(B_2) = \{x_1, x_2\}$, then $x_1$ appears in only one hyperedge of $\hh$, hence is simplicial.  
Otherwise, suppose $V(B_2) = \{x_1, x_2, \beta_1, \dots, \beta_p\}$ for some $p \geq 1$. Then for each $i$, we have $\beta_i \in V_2 \cap V(\hh)$ and $|N_G(\beta_i) \cap V_1| = 1$. Let $N_G(\beta_i) \cap V_1 = \{z_i\}$.
Now consider $x_1 \in e_1 \cap e_2$, where $e_1 \neq e_2$ and $e_i \in E(\hh)$ for $i = 1,2$. Since $x_1 \in V_2$, Observation~\ref{block-nota} implies $S_{x_1} = \{x_0\}$.
By Lemma~\ref{tech-rs}, there exist $f_1, f_2 \in E(\con_2(G))$ such that $S_{x_1} \subseteq f_i$ and the set $f_1 \cup f_2 \setminus \{x_0, x_1\}$ is connected.
Each $f_i$ is either $\{x_0, x_1, x_2\}$ or $\{x_0, x_1, \beta_j\}$ for some $j$. Let $x_1' \in e_1 \setminus e_2$ and $x_2' \in e_2 \setminus e_1$. Without loss of generality, assume $x_1' = \beta_j$ for some $j$. Then
$|f_1 \cup f_2 \setminus \{x_0, x_1\}| = 2 < r+1 = 3.$
Define $C = \{z_j\}$. Then $C$ satisfies the conditions in \eqref{cond}. Hence, by Lemma~\ref{tech-rs1}, $x_1$ is a simplicial vertex of $\hh$.
\end{proof}

Beyond the case $r=2$, one can obtain shellability of $\ind_r(G)$ for all $r \geq 2$ under an additional structural restriction on $G$, namely a bound on its diameter.

\begin{theorem}\label{block-diam}
If $G$ is a block graph with $\diam(G) \leq 4$, then $\ind_r(G)$ is shellable for all $r \geq 2$.
\end{theorem}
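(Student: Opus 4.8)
The plan is to show that $\con_r(G)$ is $\ww$-chordal, which by Theorem~\ref{russ-result} yields shellability of $\ind_r(G)$. Since block graphs form a hierarchy, Proposition~\ref{del-ope} reduces the problem to proving that every $c'$-minor $\hh$ of $\con_r(G)$ contains a simplicial vertex, under the assumption that $G$ is connected. Adopting the notation of Set-up~\ref{allsetup}, I would dispose of the easy cases first: if $|E(\hh)| \leq 2$ the conclusion is immediate, and if $\ell = 1$ then $V_1 \cap V(\hh) \neq \emptyset$, so Lemma~\ref{first-sim} supplies a simplicial vertex since simplicial vertices of $G$ persist in $\hh$. Thus I may assume $|E(\hh)| \geq 3$ and $\ell \neq 1$. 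Similarly, when $r \geq n$ (where $n$ is the length of the maximal clique path) there are very few edges and chordality is trivial, so the interesting regime is $r < n$.

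The heart of the argument is the case $\ell = 2$, where I want to exhibit a vertex $v_k \in V_2 \cap V(\hh)$ that is simplicial in $\hh$. The strategy is to feed such a $v_k$ into Lemma~\ref{tech-rs1}: if $|f_1 \cup f_2 \setminus (S_{v_k} \cup \{v_k\})| \geq r+1$ we are done immediately, and otherwise I need to verify the auxiliary condition $|C| \geq |S_{v_k}|$ using the set $C$ built in Notation~\ref{nota-tech-rs1}. Here the diameter bound $\diam(G) \leq 4$ does the crucial work. Because $\ell = 2$, the vertex $v_k$ sits at distance $1$ from the simplicial layer $V_1$, so any vertex of $V(\hh)$ is at distance at least $2$ from the ``far'' end; combined with $\diam(G) \leq 4$, every vertex of $V(\hh)$ lies within a tightly controlled neighborhood of $v_k$. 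Concretely, I would choose $v_k$ to be a $V_2$-vertex whose associated clique data $S_{v_k}$ (from Observation~\ref{block-nota}) is as small as possible, and then show that the vertices lying ``beyond'' $x_1'$ — which must exist and avoid $V(\hh)$ by condition~\eqref{cond} — are at least as numerous as $S_{v_k}$, giving $|C| \geq |S_{v_k}|$. The diameter-$4$ hypothesis guarantees these beyond-$x_1'$ vertices form a clique-like pocket attached at a single cut vertex, so their count is forced by the $c'$-minor condition (no singleton edges) to dominate $|S_{v_k}|$.

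The main obstacle I anticipate is the case analysis that arises when $\ell \geq 3$: the diameter constraint $\diam(G) \leq 4$ must be used to show that $\ell$ cannot in fact exceed $2$ once $|E(\hh)| \geq 3$ and $\ell \neq 1$, or else to handle $\ell = 3$ directly. Intuitively, if $\ell \geq 3$ then every vertex of $V(\hh)$ is at distance at least $\ell - 1 \geq 2$ from layer $V_1$, and two distinct edges of $\hh$ together with the path back through $V_{\ell-1}, \ldots, V_1$ would produce a pair of vertices at distance exceeding $4$, contradicting $\diam(G) \leq 4$. I would make this precise by tracing, for two distinct edges $e_1, e_2 \in E(\hh)$ sharing a vertex $v \in V_\ell$, the shortest paths from a $V_1$-vertex up to representatives $x_1' \in e_1 \setminus e_2$ and $x_2' \in e_2 \setminus e_1$, and show that $\dd(x_1', x_2') > 4$ whenever $\ell \geq 3$, so this case is vacuous. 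This reduces everything to $\ell = 2$, where Lemma~\ref{tech-rs1} finishes the proof.

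The delicate bookkeeping will be verifying the four conditions of~\eqref{cond} for the chosen set $C$ and confirming the size inequality $|C| \geq |S_{v_k}|$; this is where the minimality in the choice of $v_k$ (or equivalently of $S_{v_k}$) and the $c'$-minor hypothesis that forbids singleton edges must be combined carefully, exactly as in the proofs of Theorem~\ref{block-2} and Theorem~\ref{tree-lower}, but now with the diameter bound replacing the explicit small-case structure.
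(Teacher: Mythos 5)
Your skeleton matches the paper's: reduce via Theorem~\ref{russ-result} and Proposition~\ref{del-ope} to showing every $c'$-minor has a simplicial vertex, dispatch $\ell=1$ with Lemma~\ref{first-sim}, choose $v_k\in V_2\cap V(\hh)$ with $|S_{v_k}|$ minimal and feed it to Lemma~\ref{tech-rs1}, and rule out $\ell\geq 3$. But there are concrete gaps. The most important is that you never extract the actual consequence of $\diam(G)\leq 4$ that drives the paper's proof: a maximal clique path $\cp(B_1,\dots,B_n)$ forces $\diam(G)\geq n$, so $n\leq 4$. From this the paper reads off $V_2$ and $V_3$ explicitly ($V_2=B_2\setminus V_1$ when $n=3$, $V_2=N_G(x_2)\setminus V_1$ when $n=4$, $V_3=\emptyset$ or $\{x_2\}$, and $V_t=\emptyset$ for $t\geq 4$), and this explicit description is what (a) rules out $\ell=3$ --- for $n=4$ it fails because $V(\hh)$ would be $\{x_2\}$ and hence a singleton edge, violating the $c'$-minor condition, \emph{not} because of a distance contradiction --- and (b) shows that one of $x_1'\in e_1\setminus e_2$, $x_2'\in e_2\setminus e_1$ must lie in $V_2$, which is the step that makes the set $C=S_{x_1'}$ well-defined and satisfies $|C|\geq |S_{v_k}|$ by your minimality choice. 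Your substitute, a direct ``$\dd(x_1',x_2')>4$'' argument for $\ell\geq 3$, is not carried out and is not obviously true in the form stated; and you never address how $x_1'$ acquires an associated set $S_{x_1'}$ at all, which requires knowing $x_1'\in V_2$.

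A second, smaller but genuine error: your claim that ``when $r\geq n$ there are very few edges and chordality is trivial'' is false. The parameter $n$ counts cliques in the clique path, not vertices; a star-clique graph ($n\leq 2$) on many vertices has enormously many edges in $\con_r(G)$ for any $r$. The paper handles $n\leq 2$ via Corollary~\ref{complete} (persistence of the simplicial vertices of a star-clique graph), and for $3\leq n\leq 4$ it makes no restriction of the form $r<n$; your reduction to ``the interesting regime $r<n$'' therefore silently discards cases (e.g.\ $n=3$, $r=5$) that are not trivial and that the paper's argument does cover. Finally, the paper separates $r=2$ (delegated to Theorem~\ref{block-2}) from $r\geq 3$ in the $3\leq n\leq 4$ analysis; your sketch treats all $r\geq 2$ uniformly without checking that the $V_2$-membership argument survives at $r=2$.
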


\begin{proof}
By Theorem \ref{russ-result}, it is enough to prove that $\con_r(G)$ is $\ww$-chordal.
Let $\hh$ be a $c'$-minor of $\con_r(G)$ for some $r \geq 2$. We claim that $\hh$ contains a simplicial vertex. To prove this, assume that $G$ is connected and adopt the notation from Set-up~\ref{allsetup}. We consider cases based on the number of blocks $n$.

\smallskip
\noindent\textit{Case 1: $1 \leq n \leq 2$.}  
In this case, $G$ is a star-clique graph. By Corollary~\ref{complete}, $\hh$ contains a simplicial vertex.

\smallskip
\noindent\textit{Case 2: $3 \leq n \leq 4$.}  
If $r = 2$, then by Theorem~\ref{block-2}, $\con_2(G)$ is $\ww$-chordal, so $\hh$ contains a simplicial vertex.

Now assume $r \geq 3$. We analyze based on the value of $\ell$.

\smallskip
\noindent\textit{Subcase: $\ell = 1$.}  
Then, by Lemma~\ref{first-sim}, $\hh$ has a simplicial vertex.

\smallskip
\noindent\textit{Subcase: $\ell = 2$.}  
Let $V_2 \cap V(\hh) = \{v_1, \dots, v_s\}$. By Observation~\ref{block-nota}, define
$|S_{v_k}| = \min \left\{ |S_{v_i}| : 1 \leq i \leq s \right\}.$
We claim that $v_k$ is simplicial in $\hh$.
Let $e_1, e_2 \in E(\hh)$ be distinct edges such that $v_k \in e_1 \cap e_2$. By Lemma~\ref{tech-rs}, there exist $f_1, f_2 \in E(\con_r(G))$ such that
$S_{v_k} \subseteq f_i$,  $f_i \cap V(\hh) = e_i$ for  $i = 1, 2$,
and the set $f_1 \cup f_2 \setminus (S_{v_k} \cup \{v_k\})$ is connected.

We analyze this for different values of $n$:

\smallskip
\noindent\textit{Case $n = 3$.}  
Here, $V_2 = B_2 \setminus V_1$. Let $x_1' \in e_1 \setminus e_2$ and $x_2' \in e_2 \setminus e_1$. Since $x_1', x_2' \in V_2$, there are two possibilities:
If $\left| f_1 \cup f_2 \setminus (S_{v_k} \cup \{v_k\}) \right| \geq r+1$, then by Lemma~\ref{tech-rs1}, $v_k$ is simplicial.
 If $\left| f_1 \cup f_2 \setminus (S_{v_k} \cup \{v_k\}) \right| \leq r$, then $x_1' = v_j$ for some $j$, and we let $C = S_{v_j}$. Then each component of $C$ satisfies the conditions in \eqref{cond}, and again by Lemma~\ref{tech-rs1}, $v_k$ is simplicial.

\smallskip
\noindent\textit{Case $n = 4$.}  
Here, $V_2 = N_G(x_2) \setminus V_1$. Let $x_1' \in e_1 \setminus e_2$ and $x_2' \in e_2 \setminus e_1$. If $x_1', x_2' \notin V_2$, then they belong to $V_t$ for $t \geq 3$, which is impossible since $V_3 = \{x_2\}$ and $V_t = \emptyset$ for $t \geq 4$. So either $x_1'$ or $x_2'$ lies in $V_2$, and we can proceed as in the $n = 3$ case to construct an edge $e_3 \subseteq (e_1 \cup e_2) \setminus \{v_k\}$, confirming that $v_k$ is simplicial.

\smallskip
\noindent
Note that $\ell = 3$ does not occur. For $n = 3$, we have $V_3 = \emptyset$, and for $n = 4$, $V_3 = \{x_2\}$ contradicts the definition of a $c'$-minor. Also, $V_t = \emptyset$ for all $t \geq 4$.

\smallskip
Thus, every $c'$-minor of $\con_r(G)$ contains a simplicial vertex. Since every induced subgraph of $G$ is a block graph with diameter at most $4$, Proposition~\ref{del-ope} implies that $\con_r(G)$ is a $\ww$-chordal hypergraph for all $r \geq 2$.
\end{proof}

Let $G$ be a tree, and let $P := y_1 y_2 \cdots y_n$ be a longest path in $G$, called the \emph{central path}, where $y_1$ and $y_n$ are leaves. The tree $G$ is a \emph{caterpillar} if every vertex $z \notin V(P)$ is adjacent to an internal vertex of $P$, i.e., $\mathrm{d}(z, y_i) = 1$ for some $i \in \{2,\ldots,n-1\}$. More generally, $G$ is a \emph{lobster} if every vertex $z \notin V(P)$ either satisfies $\mathrm{d}(z, y_i) = 1$ for some $i \in \{2,\ldots,n-1\}$ or $\mathrm{d}(z, y_i) = 2$ for some $i \in \{3,\ldots,n-2\}$.

The following result shows that, more generally, every forest of diameter at most five has a shellable $r$-independence complex for all $r \geq 2$.

\begin{theorem}\label{tree-diam}
    If $G$ is a tree with $\diam(G) \leq 5$, then $\ind_r(G)$ is shellable for all $r \geq 2$.
\end{theorem}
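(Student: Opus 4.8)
The plan is to reduce, as in all the preceding shellability theorems, to showing that $\con_r(G)$ is $\ww$-chordal via Theorem~\ref{russ-result}. Since trees of diameter at most five form a hierarchy (deleting a vertex from such a tree yields a forest whose components again have diameter at most five), Proposition~\ref{del-ope} lets me work with a single connected $G$ and show that every $c'$-minor $\hh$ of $\con_r(G)$ contains a simplicial vertex. I would adopt the notation of Set-Up~\ref{allsetup}, handle the trivial cases $|E(\hh)|\le 2$ and $\ell=1$ (the latter via Lemma~\ref{first-sim}) immediately, and then assume $\ell\ge 2$ with $|E(\hh)|\ge 3$. Because $\diam(G)\le 5$, the stratification $V_1,V_2,\dots$ of Observation~\ref{chordal-nota} collapses quickly: leaves lie in $V_1$, and a diameter-$5$ tree can have at most three nonempty strata, so in fact $\ell\in\{2,3\}$ and $V_t=\emptyset$ for $t\ge 4$. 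This is the structural feature that makes the diameter bound usable, exactly paralleling the role played by $\diam(G)\le 4$ in Theorem~\ref{block-diam}.

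Next I would split on the value of $\ell$. When $\ell=2$, I pick $v_k\in V_2\cap V(\hh)$ minimizing $|S_{v_k}|$ (Observation~\ref{block-nota}) and aim to show $v_k$ is simplicial. Given distinct $e_1,e_2\in E(\hh)$ with $v_k\in e_1\cap e_2$, Lemma~\ref{tech-rs} produces $f_1,f_2\in E(\con_r(G))$ with $S_{v_k}\subseteq f_i$, $f_i\cap V(\hh)=e_i$, and $f_1\cup f_2\setminus(S_{v_k}\cup\{v_k\})$ connected. I then invoke Lemma~\ref{tech-rs1}: if $|f_1\cup f_2\setminus(S_{v_k}\cup\{v_k\})|\ge r+1$ we are done directly, and otherwise I must exhibit a set $C$ of the type described in Notation~\ref{nota-tech-rs1} with $|C|\ge |S_{v_k}|$. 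Here the minimality of $|S_{v_k}|$ is what I would leverage: picking $x_1'\in e_1\setminus e_2$, I argue that $x_1'$ (or a vertex beyond it) again lies in $V_2$ and carries its own associated set $S_{x_1'}$ with $|S_{x_1'}|\ge |S_{v_k}|$, and that this set satisfies conditions (i)--(iv) of \eqref{cond}, in particular the distance condition (iv) which holds because the relevant vertices sit strictly farther from $v_k$ than $x_1'$ across the cut vertex $x_1'$ in the tree. This mirrors the $n=3$ analysis inside Theorem~\ref{block-diam}.

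The case $\ell=3$ is new relative to the block-graph theorem and is where I expect the main work. Since $V_4=\emptyset$, any $v\in V_3\cap V(\hh)$ is ``central'' in the diameter-$5$ tree, and I would use Lemma~\ref{lower-lemma} (valid since $G$ is a tree) to locate a cut vertex $v\in V(\hh)$ with $V(\hh)\setminus\{v\}$ contained in a single component $G_1$ of $G\setminus\{v\}$. The tree structure forces $V(\hh)$ to be very small and concentrated near $v$, and I would argue that $v$ itself, or the unique stratum-$2$ neighbor realizing its connection, is simplicial in $\hh$ by producing the required $e_3\subseteq (e_1\cup e_2)\setminus\{v\}$ directly from the connectivity of the candidate hyperedges. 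The genuine obstacle is the bookkeeping in the $\ell=2$, $|C|\ge|S_{v_k}|$ subcase: I must verify all four conditions of \eqref{cond} simultaneously and confirm $|C|\ge |S_{v_k}|$ using the minimality choice, while correctly tracking which vertices lie in $f_1\cup f_2$ versus outside $V(\hh)$. The diameter-$5$ bound (one more than in the block case) is precisely what permits $\ell=3$ to occur at all, so the argument cannot simply quote Theorem~\ref{block-diam}; it must re-run the simplicial-vertex construction with the extra stratum handled explicitly, and ruling out $\ell\ge 4$ cleanly via the $c'$-minor condition (a singleton stratum cannot persist as a singleton edge) will be the key sanity check.
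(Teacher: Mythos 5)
Your proposal follows essentially the same route as the paper: reduce to $\ww$-chordality via Theorem~\ref{russ-result} and Proposition~\ref{del-ope}, adopt Set-Up~\ref{allsetup}, dispose of $\ell=1$ with Lemma~\ref{first-sim}, and in the case $\ell=2$ choose $v_k$ minimizing $|S_{v_k}|$, show one of $x_1',x_2'$ lies in $V_2$, and apply Lemmas~\ref{tech-rs} and~\ref{tech-rs1} with $C=S_{x_1'}$. The only divergence is that you anticipate the main work in the $\ell=3$ case, whereas it is in fact immediate: $\ell=3$ forces $V(\hh)\subseteq V_3=\{x_2,x_3\}$, so the only possible edge of the $c'$-minor is $\{x_2,x_3\}$ and every vertex is trivially simplicial.
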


\begin{proof}
By Theorem \ref{russ-result}, it is enough to prove that $\con_r(G)$ is $\ww$-chordal.
    If $\diam(G) \leq 4$, then by Theorem~\ref{block-diam}, $\con_r(G)$ is $\ww$-chordal for all $r \geq 2$. Now assume $\diam(G) = 5$. By Proposition~\ref{del-ope}, it suffices to show that every $c'$-minor of $\con_r(G)$ has a simplicial vertex when $G$ is connected.
    Let $\mathcal{H}$ be a $c'$-minor of $\con_r(G)$. Since $G$ is a tree with diameter $5$, there exists a maximum-length path $x_0x_1x_2x_3x_4x_5$. Thus, $G$ is a lobster tree. Assume the notation from Set-up~\ref{allsetup}.
If $\ell = 1$, then by Lemma~\ref{first-sim}, $\mathcal{H}$ contains a simplicial vertex. Consider next the case $\ell = 2$. Note that $V_2 = A_1 \cup A_2$, where
    $A_1 = N_G(x_3) \setminus (V_1 \cup \{x_2\})$, $A_2 = N_G(x_2) \setminus (V_1 \cup \{x_3\})$.
    Let $V_2 \cap V(\mathcal{H}) = \{v_1, \ldots, v_s\}$. Using the notation from Observation~\ref{block-nota}, define
    $|S_{v_k}| = \min \left\{ |S_{v_i}| \mid i = 1, \ldots, s \right\}.$
    We claim that $v_k$ is a simplicial vertex of $\mathcal{H}$. To prove this, consider two edges $e_1, e_2 \in E(\mathcal{H})$ such that $v_k \in e_1 \cap e_2$ and $e_1 \neq e_2$. By Lemma~\ref{tech-rs}, there exist edges $f_1, f_2 \in E(\con_r(G))$ such that
 $S_{v_k} \subseteq f_i, \quad f_i \cap V(\mathcal{H}) = e_i \quad \text{for } i = 1, 2, 
        \text{and } f_1 \cup f_2 \setminus \left(S_{v_k} \cup \{v_k\}\right) \text{ is connected.}$
  Let $x_1' \in e_1 \setminus e_2$ and $x_2' \in e_2 \setminus e_1$. Since $v_k \in V_2$, we have $v_k \in A_1$ or $v_k \in A_2$. Without loss of generality, assume $v_k \in A_1$.

    We claim that either $x_1' \in V_2$ or $x_2' \in V_2$. Suppose not. Since $V_t = \emptyset$ for all $t \geq 4$ and $\ell = 2$, it follows that $x_1', x_2' \in V_3 = \{x_2, x_3\}$. Without loss of generality, let $x_1' = x_2$ and $x_2' = x_3$. Then $v_k x_3 (= x_2') x_2 (= x_1')$ is a unique path from $v_k$ to $x_1'$. Since both $v_k, x_1' \in f_1$, and $x_3 = x_2' \in f_1$, we contradict the assumption that $x_2' \in e_2 \setminus e_1$. Hence, either $x_1' \in V_2$ or $x_2' \in V_2$.
Assume $x_1' \in V_2$ and let $x_1' = v_j$ for some $j$. If
    $\left| f_1 \cup f_2 \setminus \left(S_{v_k} \cup \{v_k\} \right) \right| \geq r + 1,$
    then by Lemma~\ref{tech-rs1}, $v_k$ is a simplicial vertex. Otherwise, suppose
    $\left| f_1 \cup f_2 \setminus \left(S_{v_k} \cup \{v_k\} \right) \right| \leq r.$
    Define $C = S_{v_j}$. Then each component of $C$ satisfies the conditions of \eqref{cond}. Since $|C| \geq |S_{v_k}|$, Lemma~\ref{tech-rs1} implies $v_k$ is simplicial in $\mathcal{H}$.

    Now consider $\ell = 3$. Let $v \in V_3 \cap V(\mathcal{H})$. Since $V_3 = \{x_2, x_3\}$ and $G \setminus (V_1 \cup V_2) = V_3$, any such vertex $v$ lies in a unique edge $\{x_2, x_3\}$. Hence, $v$ is simplicial.
Finally, observe that $\ell \geq 4$ is not possible since $V_t = \emptyset$ for all $t \geq 4$. Thus, every $c'$-minor of $\con_r(G)$ contains a simplicial vertex, completing the proof.
\end{proof}

Let $G$ be the graph shown below, referred to as the $\T_3$-graph.  
The vertex set is $V(G) = \{x_1, x_2, x_3, x_4, x_5, x_6, x_7, x_8, x_9\}$, and the edge set is  
$$E(G) = \{\{x_1,x_2\}, \{x_2,x_3\}, \{x_3,x_4\}, \{x_3,x_5\}, \{x_4,x_5\}, \{x_5,x_6\}, \{x_6,x_7\}, \{x_4,x_8\}, \{x_8,x_9\}\}.$$

\begin{center}
\begin{tikzpicture}[scale=0.4]
\draw [line width=1pt] (4,3)-- (6,3);
\draw [line width=1pt] (6,3)-- (8,3);
\draw [line width=1pt] (8.962424242424257,4.209653679653672)-- (8,3);
\draw [line width=1pt] (8.962424242424257,4.209653679653672)-- (10,3);
\draw [line width=1pt] (8,3)-- (10,3);
\draw [line width=1pt] (10,3)-- (12,3);
\draw [line width=1pt] (12,3)-- (14,3);
\draw [line width=1pt] (8.962424242424257,4.209653679653672)-- (8.979740259740273,5.525670995670988);
\draw [line width=1pt] (8.979740259740273,5.525670995670988)-- (9.014372294372308,7.136060606060599);
\begin{scriptsize}
\draw [fill=black] (4,3) circle (2.5pt);
\draw[color=black] (4.1,3.65) node {$x_1$};
\draw [fill=black] (6,3) circle (2.5pt);
\draw[color=black] (6.0,3.7) node {$x_2$};
\draw [fill=black] (8,3) circle (2.5pt);
\draw[color=black] (7.55,3.5) node {$x_3$};
\draw [fill=black] (8.9624,4.2097) circle (2.5pt);
\draw[color=black] (9.33,4.55) node {$x_4$};
\draw [fill=black] (10,3) circle (2.5pt);
\draw[color=black] (10.1,3.6) node {$x_5$};
\draw [fill=black] (12,3) circle (2.5pt);
\draw[color=black] (11.85,3.55) node {$x_6$};
\draw [fill=black] (14,3) circle (2.5pt);
\draw[color=black] (13.95,3.6) node {$x_7$};
\draw [fill=black] (8.9797,5.5257) circle (2.5pt);
\draw[color=black] (9.45,5.7) node {$x_8$};
\draw [fill=black] (9.0144,7.1361) circle (2.5pt);
\draw[color=black] (9.5,7.2) node {$x_9$};
\end{scriptsize}
\end{tikzpicture}
\end{center}

For $r=3$, the shellability of $\ind_3(G)$ for block graphs can be characterized by the absence of certain forbidden configurations. In particular, excluding $\T_3$ as an induced subgraph guarantees shellability.

\begin{theorem}\label{block:3}
Let $G$ be a block graph. If $G$ is $\T_3$-free, then the complex $\ind_3(G)$ is shellable.
\end{theorem}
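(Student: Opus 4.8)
The plan is to apply Theorem~\ref{russ-result} and reduce the problem to showing that $\con_3(G)$ is $\ww$-chordal. Both being a block graph and excluding $\T_3$ as an induced subgraph are hereditary under vertex deletion, so the family of $\T_3$-free block graphs is a hierarchy in the sense of Section~\ref{technical}; by Proposition~\ref{del-ope} it then suffices to prove that every $c'$-minor $\hh$ of $\con_3(G)$ contains a simplicial vertex, and we may assume $G$ is connected. Working in the notation of Set-Up~\ref{allsetup}, the degenerate cases are disposed of first: if $|E(\hh)| \leq 2$ the claim is immediate, if $G$ is a star-clique graph then Corollary~\ref{complete} applies, and if $\ell = 1$ then Lemma~\ref{first-sim} produces a simplicial vertex of $\hh$ directly. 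All of the substance therefore lies in the layers $\ell \geq 2$.

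The guiding structural observation is that, in a block graph, an induced $\T_3$ is exactly a block that is a clique on at least three vertices together with induced length-two paths attached at three of its vertices; since in a block graph distinct branches meet only at the shared cut vertex, $\T_3$-freeness forbids three simultaneous depth-two attachments on a common block of size at least three. For $\ell = 2$ I would mirror the arguments of Theorems~\ref{block-diam} and \ref{tree-diam}: choose $v_k \in V_2 \cap V(\hh)$ minimizing $|S_{v_k}|$ (Observation~\ref{block-nota}), take distinct $e_1, e_2 \in E(\hh)$ with $v_k \in e_1 \cap e_2$, and invoke Lemma~\ref{tech-rs} to obtain $f_1, f_2$ with $S_{v_k} \subseteq f_i$, $f_i \cap V(\hh) = e_i$, and $f_1 \cup f_2 \setminus (S_{v_k} \cup \{v_k\})$ connected. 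If this connected set has at least $4$ elements, the first part of Lemma~\ref{tech-rs1} shows $v_k$ is simplicial; otherwise I would pick $x_1' \in e_1 \setminus e_2$ and $x_2' \in e_2 \setminus e_1$ and use $\T_3$-freeness to force one of them, say $x_1'$, into $V_2$, so that $C = S_{x_1'}$ satisfies the conditions \eqref{cond} with $|C| = |S_{x_1'}| \geq |S_{v_k}|$ by the minimal choice of $v_k$; the second part of Lemma~\ref{tech-rs1} then applies. This substitution of $\T_3$-freeness for the diameter bounds of Theorems~\ref{block-diam} and \ref{tree-diam} is the main new point: without bounding the diameter, it is the exclusion of $\T_3$ that prevents $x_1'$ and $x_2'$ from both retreating into deeper layers along independent branches.

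For $\ell \geq 3$ the relevant vertices lie along the maximal clique-path of Set-Up~\ref{allsetup}, and I would use Lemma~\ref{not-edge} (with $r = 3$) to pin down which connecting vertices $x_i$ and their neighbors are forced into $V(\hh)$ once the first $\ell - 1$ layers are absent, thereby fixing the local shape of $G$ around the deepest surviving layer. The goal is again to exhibit a vertex $v \in V_\ell \cap V(\hh)$ for which every pair of edges $e_1, e_2 \in E(\hh)$ through $v$ admits a common refinement $e_3 \subseteq (e_1 \cup e_2) \setminus \{v\}$ realized by a connected $4$-set avoiding $v$; the $\T_3$-free hypothesis guarantees such a refinement by preventing the witnesses of $e_1 \setminus e_2$ and $e_2 \setminus e_1$ from spreading into three independent depth-two arms off a size-$\geq 3$ block containing $v$.

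The main obstacle I anticipate is the bookkeeping that converts the failure of simpliciality into an explicit induced $\T_3$. Concretely, in the small-connected-set subcase of $\ell = 2$ and throughout $\ell \geq 3$, one must argue that if no admissible refinement $e_3$ exists, then the central block together with its certifying branches assembles into an induced subgraph isomorphic to $\T_3$; verifying that this subgraph is genuinely \emph{induced} relies on the block-graph fact that the branches are vertex-disjoint apart from the shared clique, and requires careful tracking of exactly which of their vertices survive into the $c'$-minor $\hh$. This merging-versus-obstruction dichotomy, rather than any single computation, is the crux of the proof.
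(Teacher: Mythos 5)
Your framework coincides with the paper's: reduce via Theorem~\ref{russ-result} and Proposition~\ref{del-ope} to showing that every $c'$-minor $\hh$ of $\con_3(G)$ has a simplicial vertex for connected $G$, dispose of the short clique-path case ($n\le 4$) by Theorem~\ref{block-diam} and of $\ell=1$ by Lemma~\ref{first-sim}, and for $\ell=2$ choose a $V_2$-vertex minimizing $|S_{v_k}|$ and feed Lemmas~\ref{tech-rs} and~\ref{tech-rs1}. However, the part you defer as ``bookkeeping'' is the entire content of the paper's argument, and the specific commitments you do make would not survive carrying it out. First, the minimizer cannot be taken over all of $V_2\cap V(\hh)$: condition (iv) of \eqref{cond} is a distance inequality relative to $v_k$ and $x_1'$, so the paper minimizes $|S_{v_i}|$ separately over $(N_G(x_2)\setminus N_G(x_3))\cap V_2\cap V(\hh)$, and only when that set is empty over $(N_G(x_3)\setminus N_G(x_4))\cap V_2\cap V(\hh)$, additionally accounting for a possible second maximal clique path $\cp(A_3,A_2,A_1,B_4,\dots,B_n)$ meeting the first at $x_3$. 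Second, your dichotomy ``$x_1'$ or $x_2'$ lies in $V_2$'' is too coarse: in the paper's analysis $x_1'$ may instead be a connecting vertex ($x_2$, or the vertex $a$ of the second clique path), and in those subcases the witness set $C$ is not $S_{x_1'}$ but $\{x_1\}$, $\{x_0,x_1\}$, or $\{b\}$; one must then still verify $|C|\ge|S_{v_k}|$ and conditions (i)--(iv), which is exactly why the proof splits on $|S_{v_k}|=1$ versus $|S_{v_k}|=2$ (the bound $|S_{v_k}|\le 2$ coming from the $c'$-minor condition at $r=3$).

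Two further points. The residual case is not an ``$\ell\ge 3$'' argument of the kind you sketch: when both neighborhood intersections above are empty, the paper applies Lemma~\ref{not-edge} to force $x_2,x_3\in V(\hh)$ and concludes that $\{x_2,x_3\}$ is the unique edge of $\hh$ through $x_2$, so $x_2$ is simplicial outright. And the role of $\T_3$-freeness is narrower and more precise than your ``clique with three depth-two arms'' heuristic: the paper uses it once, at the outset of the $n\ge 5$ case, to show that any vertex at distance $2$ from a non-connecting vertex $y\in V(B_3)$ must reach $y$ through a vertex of $B_3$ itself; this is what confines the relevant $V_2$-vertices to the neighborhoods of $x_2$ and $x_3$ and makes the case split exhaustive. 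Until these steps are actually executed -- in particular the verification of \eqref{cond} for each candidate $C$ and the exhaustiveness of the case division -- your outline identifies the right tools but does not constitute a proof.
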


\begin{proof}
By Theorem \ref{russ-result}, it is enough to prove that $\con_3(G)$ is $\ww$-chordal.
By Proposition~\ref{del-ope}, it suffices to show that every $c'$-minor of $\con_3(G)$ has a simplicial vertex when $G$ is connected. Let $\mathcal{H}$ be a $c'$-minor of $\con_3(G)$, following the notation from Set-up~\ref{allsetup}.
If $1 \leq n \leq 4$, then by Theorem~\ref{block-diam}, $\mathcal{H}$ has a simplicial vertex. Now assume $n \geq 5$. Suppose there exists a vertex $y \in V(B_3) \setminus \{x_2, x_3\}$ such that $d(z, y) = 2$, i.e., there is a path $yaz$ for some $z \in V(G)$. Clearly $a \in V(B_3)$, if $a \notin V(B_3)$, then $G$ contains a $\T_3$, contradicting the assumption that $G$ is $\T_3$-free.
If $\ell = 1$, then by Lemma~\ref{first-sim}, $\mathcal{H}$ contains a simplicial vertex. We now consider the case $\ell \neq 1$.

 Suppose $(N_G(x_2) \setminus N_G(x_3)) \cap V_2 \cap V(\mathcal{H}) \neq \emptyset.$
This forces $\ell = 2$. Let
$(N_G(x_2) \setminus N_G(x_3)) \cap V_2 \cap V(\mathcal{H}) = \{v_1, \ldots, v_q\}.$
Using the notation from Observation~\ref{block-nota}, define
$|S_{v_k}| = \min \left\{ |S_{v_i}| \mid i \in \{1, \ldots, q\} \right\}.$
We claim that $v_k$ is a simplicial vertex of $\mathcal{H}$. To see this, let $e_1, e_2 \in E(\mathcal{H})$ be such that $v_k \in e_1 \cap e_2$ and $e_1 \neq e_2$. By Lemma~\ref{tech-rs}, there exist edges $f_1, f_2 \in E(\con_3(G))$ such that $S_{v_k} \subseteq f_i$ and $f_i \cap V(\mathcal{H}) = e_i$ for $i = 1, 2$. Moreover, the set
$f_1 \cup f_2 \setminus \left(S_{v_k} \cup \{v_k\}\right)$
is connected. Let $x_1' \in e_1 \setminus e_2$ and $x_2' \in e_2 \setminus e_1$. Note that $|S_{v_k}| \leq 2$; otherwise, $\{v_k\} \in E(\mathcal{H})$, contradicting the assumption that $\mathcal{H}$ is a $c'$-minor.

\vskip 1mm
\noindent
\textit{Case 1:} $|S_{v_k}| = 1$. Let $S_{v_k} = \{z_0\}$. We first claim that either $x_1'$ or $x_2'$ must belong to $V_2$. Suppose, for contradiction, that neither $x_1'$ nor $x_2'$ is in $V_2$.  Since $|f_i| = 4$ and $S_{v_k} \subseteq f_i$, it follows that $x_1', x_2' \in \{x_2, x_3\}$. As $x_1' \neq x_2'$, we may assume without loss of generality that $x_1' = x_2$ and $x_2' = x_3$. Since $v_k, x_3 \in f_2$, there exists a shortest path $P(v_k, x_3)$ in $f_2$, with all internal vertices in $f_2$. Given $z_0 \in f_2$ and $|f_2| = 4$, and using the fact that $v_k \in (N_G(x_2) \setminus N_G(x_3)) \cap V_2 \cap V(\mathcal{H})$, we conclude that
$P(v_k, x_3):= v_k x_2 x_3.$
This is a contradiction since $x_2 = x_1' \in f_2$ but $x_1' \in e_1 \setminus e_2$. Therefore, either $x_1' \in V_2$ or $x_2' \in V_2$. Without loss of generality, assume $x_1' \in V_2$, and let $C = S_{x_1'}$. Then each component of $C$ satisfies the conditions in \eqref{cond}. Since $|C| \geq |S_{v_k}|$, Lemma~\ref{tech-rs1} implies that $v_k$ is simplicial.

\vskip 1mm
\noindent
\textit{Case 2:} $|S_{v_k}| = 2$. We claim that at least one of $x_1'$ or $x_2'$ lies in $\{v_1, \ldots, v_q\}$. Suppose not. Then $x_1', x_2' \notin \{v_1, \ldots, v_q\}$, which would force $x_1' = x_2' = x_2$, a contradiction. Hence, without loss of generality, assume $x_1' = v_j$ for some $j$, and let $C = S_{v_j}$. Then each component of $C$ satisfies conditions (i)--(iv) in \eqref{cond}. Furthermore,
$|f_1 \cup f_2 \setminus (S_{v_k} \cup \{v_k\})| = 2 < 3.$
So by Lemma~\ref{tech-rs1}, $v_k$ is simplicial.

 Suppose $(N_G(x_2) \setminus N_G(x_3)) \cap V_2 \cap V(\mathcal{H}) = \emptyset.$
 
 \medskip
\noindent
Assume 
$(N_G(x_3) \setminus N_G(x_4)) \cap V_2 \cap V(\mathcal{H}) \neq \emptyset.$
This implies $\ell = 2$. Let 
$(N_G(x_3) \setminus N_G(x_4)) \cap V_2 \cap V(\mathcal{H}) = \{v_1', \ldots, v_p'\}.$
Using Observation~\ref{block-nota}, choose $v_t'$ such that
$|S_{v_t'}| = \min \{ |S_{v_j'}| \mid 1 \leq j \leq p \}.$
Take two distinct edges $e_1, e_2 \in E(\mathcal{H})$ with $v_t' \in e_1 \cap e_2$. By Lemma~\ref{tech-rs}, there exist edges $f_1, f_2 \in E(\con_3(G))$ such that $S_{v_t'} \subseteq f_i$ for each $i$, $f_i \cap V(\mathcal{H}) = e_i$, and $f_1 \cup f_2 \setminus (S_{v_t'} \cup \{v_t'\})$ is connected.
Let $x_1' \in e_1 \setminus e_2$ and $x_2' \in e_2 \setminus e_1$. Note $|S_{v_t'}| \leq 2$; otherwise $\{v_t'\} \in E(\mathcal{H})$, contradicting $\mathcal{H}$ being a $c'$-minor.
Suppose there exists another maximal clique path 
$\cp(A_3,A_2,A_1,B_4,\ldots,B_n)$ where each $A_i$ is a clique ($1 \leq i \leq 3$) with connecting vertices $b,a,x_3,\ldots,x_{n-1}$.
If $(N_G(a)\setminus N_G(x_3)) \cap V_2 \cap V(\mathcal{H}) \neq \emptyset$, then the conditions of previous case are satisfied, and consequently there exists a simplicial vertex in $\mathcal{H}$.
Otherwise ($(N_G(a) \setminus N_G(x_3)) \cap V_2 \cap V(\mathcal{H}) = \emptyset$), we have $b \notin V(\mathcal{H})$.

\noindent
\textit{Case 1:} $|S_{v_t'}| = 1$.
Either $x_1'$ or $x_2'$ belongs to $V_2 \cup \{x_2,a\}$. Without loss of generality, assume $x_1' \in V_2 \cup \{x_2, a\}$. Define
\[
C = \begin{cases} 
    S_{x_1'} & \text{if } x_1' \in V_2, \\
    \{x_1\} & \text{if } x_1' = x_2, \\
    \{b\} & \text{if } x_1' = a.
\end{cases}
\]
Then each component of $C$ satisfies \eqref{cond} and $|C| \geq |S_{v_t'}|$, so by Lemma~\ref{tech-rs1}, $v_t'$ is simplicial.

\medskip
\noindent
\textit{Case 2:} $|S_{v_t'}| = 2$.
Similarly, either $x_1'$ or $x_2'$ belongs to $\{v_1', \ldots, v_p'\} \cup \{x_2\}$. Let $x_0 \in V(B_1) \cap V_1$ and note that
$|f_1 \cup f_2 \setminus (S_{v_t'} \cup \{v_t'\})| = 2 < 4.$
Assuming $x_1' \in \{v_1', \ldots, v_p'\} \cup \{x_2\}$, define
\[
C = \begin{cases} 
    S_{x_j'} & \text{if } x_1' = v_j', \\
    \{x_0, x_1\} & \text{if } x_1' = x_2.
\end{cases}
\]
Again each component of $C$ satisfies \eqref{cond} and $|C| \geq |S_{v_t'}|$, proving $v_t'$ is simplicial via Lemma~\ref{tech-rs1}.
If no such maximal clique path $\cp(A_3,A_2,A_1,B_4,\ldots,B_n)$ exists, analogous arguments show $v_t'$ remains simplicial in $\mathcal{H}$

 Suppose 
$(N_G(x_3) \setminus N_G(x_4)) \cap V_2 \cap V(\mathcal{H}) = \emptyset.$
Observe that $x_0, x_1 \notin V(\mathcal{H})$, where $x_0 \in V(B_1) \cap V_1$. By Lemma~\ref{not-edge}, it follows that $x_2 \in V(\mathcal{H})$. Since $x_3 \in N_G(x_2) \setminus \{x_1\}$, Lemma~\ref{not-edge} further implies that $x_3 \in V(\mathcal{H})$. 
Given that 
$(N_G(x_2) \setminus N_G(x_3)) \cap V_2 \cap V(\mathcal{H}) = \emptyset,$
we conclude that the only edge containing $x_2$ is $\{x_2, x_3\}$, since 
$\{x_0, x_1, x_2, x_3\} \cap V(\mathcal{H}) = \{x_2, x_3\}.$
Thus, $x_2$ is a simplicial vertex of $\mathcal{H}$.
\end{proof}

As an immediate consequence of Theorem~\ref{block:3}, we obtain the following result:

\begin{corollary}\label{3:tree}  
If $G$ is a forest, then the complex $\ind_3(G)$ is shellable.  
\end{corollary}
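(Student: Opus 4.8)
The plan is to deduce the corollary directly from Theorem~\ref{block:3}, so the whole task reduces to checking that an arbitrary forest $G$ satisfies the two hypotheses of that theorem: that $G$ is a block graph, and that $G$ is $\T_3$-free. Both verifications are structural and require no new combinatorics beyond the definitions.

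First I would confirm that every forest is a block graph. A block is a maximal connected subgraph that remains connected after deleting any single vertex; in a forest each edge is a block, and every such block is the complete graph $K_2$, since a block on three or more vertices would contain a cycle, which is impossible in an acyclic graph. Hence every block of $G$ is complete, so $G$ is a block graph. The only mild point here is that a general forest may be disconnected, but the proof of Theorem~\ref{block:3} already works through the hierarchy and Proposition~\ref{del-ope}, which reduce the $\ww$-chordality question to the connected case; since each connected component of $G$ is a tree, this causes no difficulty.

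Next I would verify that $G$ is $\T_3$-free, and this is the crux of the (very short) argument. The defining edge set of the $\T_3$-graph includes $\{x_3,x_4\}$, $\{x_3,x_5\}$, and $\{x_4,x_5\}$, so $\T_3$ contains the triangle on $\{x_3,x_4,x_5\}$ as a subgraph. Since a forest is acyclic it contains no triangle at all, and therefore it cannot contain $\T_3$ as an induced subgraph (indeed, not even as an arbitrary subgraph). Thus $G$ is automatically $\T_3$-free.

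Having confirmed both hypotheses, Theorem~\ref{block:3} applies and yields that $\ind_3(G)$ is shellable, which is exactly the assertion. I do not anticipate any genuine obstacle: the forbidden-triangle observation is immediate, and the block-graph and $\T_3$-free conditions are both inherited by induced subgraphs and by components, so the disconnected case of a forest presents no real difficulty. The entire content of the corollary is the remark that acyclicity forbids the triangle hidden inside $\T_3$.
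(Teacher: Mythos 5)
Your proposal is correct and matches the paper's intent exactly: the paper states the corollary as an immediate consequence of Theorem~\ref{block:3}, and your two verifications (every block of a forest is a $K_2$, and $\T_3$ contains the triangle $\{x_3,x_4,x_5\}$ so no forest can contain it) are precisely the omitted details. Nothing further is needed.
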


We now introduce a special class of block graphs defined in terms of their clique-path structure.  

\begin{definition}\label{def:T1-graph}
A connected graph $H$ is called a \emph{$\mathcal{T}_1$-graph} if it is a block graph and contains a clique-path subgraph 
$P = \cp(B_1, B_2, \dots, B_n)$,  $n \geq 1$,
of maximum length, with connecting vertices $x_i \in B_i \cap B_{i+1}$ for $1 \leq i \leq n-1$, such that every clique $K$ of $H$ not belonging to $P$ intersects $P$ in exactly one of these connecting vertices, that is,
$V(K) \cap V(P) = \{x_i\}$
for some $i$ with $1 \leq i \leq n-1$. A graph $G$ is called a \emph{$\mathcal{T}_1$-graph} if each of its connected components is a $\mathcal{T}_1$-graph.
\end{definition}

We next extend this notion by allowing a broader attachment of cliques beyond the clique-path, leading to the following definition.  

\begin{definition}\label{def:T2-graph}
A connected graph $H$ is called a \emph{$\mathcal{T}_2$-graph} if it is a block graph and contains a clique-path subgraph 
$P = \cp(B_1, B_2, \dots, B_n)$, $n \geq 1$,
of maximum length, with connecting vertices $x_i \in B_i \cap B_{i+1}$ for $1 \leq i \leq n-1$, such that every clique $K$ of $H$ not belonging to $P$ satisfies one of the following conditions:
\begin{enumerate}
   \item $V(K) \cap V(P) = \{x_i\}$ for some $i$ with $1 \leq i \leq n-1$, or
   \item $V(K) \cap V(P) = \emptyset$ and there exists a clique $K'$ of $H$ satisfying (a) such that $V(K) \cap V(K') = \{y\}$ for some vertex $y$.
\end{enumerate}
A graph $G$ is called a \emph{$\mathcal{T}_2$-graph} if each of its connected components is a $\mathcal{T}_2$-graph.
\end{definition}

The following observation illustrates that both $\mathcal{T}_1$-graphs and $\mathcal{T}_2$-graphs form a hierarchy.

\begin{obs}\label{obs-cater}
Let $G$ be a connected $\mathcal{T}_1$-graph, and let $\cp(B_1,\dots,B_n)$, $n \geq 1$, be a maximum-size clique path subgraph of $G$. Since $G$ is a block graph, every vertex is either simplicial or a cut vertex. If $x$ is simplicial, then $G \setminus x$ remains connected. When $x \in B_1$ or $x \in B_n$, if $\deg(x)=1$ we may choose another maximum-size clique path, while if $\deg(x)>1$ the same clique path $\cp(B_1,\dots,B_n)$ remains valid; if $x \notin B_1 \cup B_n$, the path $\cp(B_1,\dots,B_n)$ still satisfies all conditions of Definition~\ref{def:T1-graph}. If $x$ is a cut vertex, then $G \setminus x$ decomposes into connected components, each containing a maximum-size clique path that satisfies Definition~\ref{def:T1-graph}. Thus the structure of a $\mathcal{T}_1$-graph is preserved, and hence $\mathcal{T}_1$-graphs form a hierarchy. A similar argument shows that $\mathcal{T}_2$-graphs also form a hierarchy.
\end{obs}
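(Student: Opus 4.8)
The plan is to verify the defining closure property of a hierarchy directly: for every vertex $u$ of a $\mathcal{T}_1$-graph (resp.\ $\mathcal{T}_2$-graph) $G$, the deletion $G \setminus u$ is again a $\mathcal{T}_1$-graph (resp.\ $\mathcal{T}_2$-graph). Two preliminary reductions do most of the organizational work. First, since being a $\mathcal{T}_1$-graph (or $\mathcal{T}_2$-graph) is defined component by component and deleting $u$ alters only the component containing $u$, it suffices to treat a \emph{connected} $G$ and show that each connected component of $G \setminus u$ is a $\mathcal{T}_1$-graph (resp.\ $\mathcal{T}_2$-graph). Second, block graphs are closed under taking induced subgraphs, so each component of $G \setminus u$ is automatically a block graph; what remains is to produce, in each component, a maximum-length clique path witnessing the required attachment condition. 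Throughout I would use the standard fact that in a connected block graph on at least two vertices every vertex is either simplicial or a cut vertex, and that the connecting vertices $x_i$ of the clique path $P = \cp(B_1, \ldots, B_n)$ are cut vertices, hence never simplicial.

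The simplicial case is the easy one. If $u$ is simplicial then $G \setminus u$ is still connected, and since $u$ is not a connecting vertex it lies in exactly one block $B$ of $G$; deleting it merely replaces $B$ by the clique $B \setminus \{u\}$ and leaves all connecting vertices and all off-path cliques intact. When $u$ is an interior vertex (not in $B_1 \cup B_n$) the very same path $\cp(B_1, \ldots, B_n)$ continues to satisfy Definition~\ref{def:T1-graph}. When $u$ lies in an end clique $B_1$ or $B_n$, I would split on its degree: if $\deg(u) > 1$ the path is unchanged, while if $\deg(u) = 1$ the endpoint clique may collapse and I would simply select a new maximum-length clique path in $G \setminus u$, which still has the spine-plus-pendant-cliques shape.

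The cut-vertex case is the crux and the main obstacle. Here $G \setminus u$ breaks into several components, and I must exhibit a valid maximum-length clique path in each. The guiding structural picture is that a $\mathcal{T}_1$-graph is a \emph{clique caterpillar}: its block-cut tree is a caterpillar, so every off-path clique attaches directly at a connecting vertex (a chaining clique would force a cycle spanning two blocks, impossible in a block graph), and hence every cut vertex is one of the $x_i$. Deleting such a cut vertex $x_i$ separates the left portion (containing $\cp(B_1, \ldots, B_i \setminus \{x_i\})$ together with the pendant cliques at $x_1, \ldots, x_{i-1}$), the symmetric right portion, and each pendant clique at $x_i$ (now a stand-alone clique, hence a trivial $\mathcal{T}_1$-graph with $n = 1$). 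The work is to argue that a maximum-length clique path chosen inside each surviving portion still meets every off-path clique in a single connecting vertex; I would establish this by noting that deleting $x_i$ cannot create a new off-path clique meeting the spine in more than one vertex, since any such clique would already have violated the attachment condition in $G$, so the caterpillar structure is inherited. The base cases $n = 1$ (a single clique) are immediate.

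Finally, the $\mathcal{T}_2$ extension follows the identical case split, with one extra bookkeeping layer. A $\mathcal{T}_2$-graph adds a second tier of cliques $K$ that miss $P$ but attach through a vertex $y$ to a clique $K'$ of type (1) in the sense of Definition~\ref{def:T2-graph}. The simplicial and interior deletions are handled exactly as before. The only genuinely new situation is deleting a cut vertex that is such an attachment vertex $y$ (or a connecting vertex carrying second-tier cliques): one checks that each second-tier clique, once severed, either remains attached to its type-(1) clique in the same component or becomes a stand-alone clique, and in both cases conditions (1)--(2) of Definition~\ref{def:T2-graph} persist for the chosen maximum-length clique path. I expect the verification that re-choosing a maximum-length spine in a fragmented component preserves the attachment conditions to be the most delicate point for both families.
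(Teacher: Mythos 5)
Your proposal is correct and follows essentially the same route as the paper's own justification, which is given inline in the observation: a case split on whether the deleted vertex is simplicial or a cut vertex, with the simplicial case subdivided by position in the clique path and degree, and the cut-vertex case handled by exhibiting a valid maximum clique path in each resulting component. You supply more detail than the paper does (notably the block-cut-tree caterpillar picture showing every cut vertex of a $\mathcal{T}_1$-graph is a connecting vertex, and the verification that a re-chosen spine inherits the attachment condition), but the underlying argument is the same.
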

The structural restrictions imposed by $\TT_2$-graphs allow us to extend shellability results to higher independence complexes. In particular, we have the following.  
\begin{theorem}\label{block-4}
    If $G$ is a $\TT_2$-graph, then $\ind_4(G)$ is shellable.
\end{theorem}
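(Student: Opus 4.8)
The plan is to follow the now-familiar template established in Theorems~\ref{block-2}, \ref{block-diam}, and \ref{tree-diam}: by Theorem~\ref{russ-result}, it suffices to prove that $\con_4(G)$ is $\ww$-chordal, and since $\TT_2$-graphs form a hierarchy (Observation~\ref{obs-cater}), Proposition~\ref{del-ope} reduces the problem to showing that every $c'$-minor $\hh$ of $\con_4(G)$ contains a simplicial vertex, under the assumption that $G$ is a connected $\TT_2$-graph. I would fix the maximum-size clique path $\cp(B_1,\dots,B_n)$ from Definition~\ref{def:T2-graph} with connecting vertices $x_1,\dots,x_{n-1}$, adopt the notation of Set-Up~\ref{allsetup}, and dispose of the easy cases first: if $\ell=1$ then Lemma~\ref{first-sim} gives a simplicial vertex immediately, and for small $n$ (where $\diam(G)\le 4$) Theorem~\ref{block-diam} applies. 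So the substance is the case $\ell=2$ with $n$ large.

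For $\ell=2$ I would let $V_2\cap V(\hh)=\{v_1,\dots,v_s\}$ and, exactly as before, select the vertex $v_k$ minimizing $|S_{v_k}|$ via Observation~\ref{block-nota}, then claim $v_k$ is simplicial in $\hh$. Given distinct $e_1,e_2\in E(\hh)$ with $v_k\in e_1\cap e_2$, Lemma~\ref{tech-rs} supplies $f_1,f_2\in E(\con_4(G))$ with $S_{v_k}\subseteq f_i$, $f_i\cap V(\hh)=e_i$, and $f_1\cup f_2\setminus(S_{v_k}\cup\{v_k\})$ connected. Picking $x_1'\in e_1\setminus e_2$ and $x_2'\in e_2\setminus e_1$, the goal is to invoke Lemma~\ref{tech-rs1}: if $|f_1\cup f_2\setminus(S_{v_k}\cup\{v_k\})|\ge r+1=5$ we are done directly, and otherwise I must exhibit a set $C$ of size at least $|S_{v_k}|$ satisfying conditions (i)--(iv) of~\eqref{cond}, typically by taking $C=S_{x_1'}$ when $x_1'\in V_2$. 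The decisive combinatorial step is therefore to prove that at least one of $x_1',x_2'$ lies in $V_2$ (or in a controlled small exceptional set, as in Theorem~\ref{block:3}). Here the $\TT_2$-structure is essential: unlike the lobster case of Theorem~\ref{tree-diam}, condition~(b) of Definition~\ref{def:T2-graph} permits cliques hanging at distance two from the path, so $V_t$ need not vanish for $t\ge 4$, but the block-graph geometry still forces that any $x_i'$ failing to lie in $V_2$ must sit among the finitely many connecting vertices $x_2,x_3$ of the central path, which can be excluded by the same cut-vertex/shortest-path contradiction used repeatedly above (e.g.\ the argument that $v_k x_3 x_2$ being the unique path contradicts $x_2'\in e_2\setminus e_1$).

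I would then handle $\ell=3$ separately. Because $r=4$, an edge of $\con_4(G)$ has size $5$, so a vertex $v\in V_3\cap V(\hh)$ whose deeper neighborhood has been contracted away should again lie in very few edges; I expect that the $c'$-minor condition (no singleton edges) combined with the $\TT_2$-constraint forces $v$ into essentially one edge, making it simplicial, much as in the $\ell=3$ analysis of Theorem~\ref{tree-diam}. Values $\ell\ge 4$ would need to be ruled out by checking the depth of the $V_i$-stratification for a $\TT_2$-graph against the size-$5$ edges.

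\textbf{Main obstacle.} The hard part is the $\ell=2$ dichotomy ``$x_1'\in V_2$ or $x_2'\in V_2$'' when $n$ is large and the graph has genuine distance-two cliques allowed by Definition~\ref{def:T2-graph}(b). In the diameter-bounded theorems this dichotomy was cheap because $V_3$ was a singleton or empty and $V_t=\emptyset$ for $t\ge 4$; for $\TT_2$-graphs the higher strata $V_t$ can be nonempty, so I will need a careful case analysis of where $x_1',x_2'$ can sit relative to the clique path, using the ``unique connecting vertex'' property (condition~(a)) and the single-shared-vertex attachment of secondary cliques (condition~(b)) to bound $\dd(x_i',v_k)$ and produce the required connected witness set $C$ feeding into Lemma~\ref{tech-rs1}. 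Verifying that the size bound $|C|\ge |S_{v_k}|$ holds in every configuration, especially when $x_i'$ lies on a secondary clique, is where the bulk of the technical work will concentrate.
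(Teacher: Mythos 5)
Your reduction (Theorem~\ref{russ-result}, Observation~\ref{obs-cater}, Proposition~\ref{del-ope}, then Set-Up~\ref{allsetup} with the easy cases $\ell=1$ and $1\le n\le 4$ dispatched by Lemma~\ref{first-sim} and Theorem~\ref{block-diam}) matches the paper exactly. The gap is in the heart of the argument, and you have correctly located it yourself but not closed it. Your plan for $\ell=2$ minimizes $|S_{v_k}|$ over \emph{all} of $V_2\cap V(\hh)$ and then tries to prove the dichotomy ``$x_1'\in V_2$ or $x_2'\in V_2$.'' For a $\TT_2$-graph this set $V_2$ contains, besides the vertices near the two ends of the clique path, every attachment vertex $y$ of a condition-(b) clique, and these sit adjacent to arbitrary interior connecting vertices $x_i$. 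For such a $v_k$ the dichotomy and, more importantly, the construction of a witness set $C$ satisfying condition (iv) of~\eqref{cond} (all of $C$ strictly farther from $v_k$ than $x_1'$ is) are not available by the arguments you cite; you flag this as the ``main obstacle'' but do not supply the missing idea. Likewise, your claim that $\ell\ge 4$ ``would need to be ruled out'' cannot succeed: for a long clique path the strata $V_4=\{x_3,x_{n-3}\}$, etc., are nonempty, so these cases must be handled, not excluded.

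The paper avoids both problems by organizing the case analysis differently: instead of stratifying by $\ell$ and minimizing globally, it conditions on whether $N_G(x_2)\cap V_2\cap V(\hh)$ is nonempty (Case 1), then on whether $N_G(x_3)\cap V_2\cap V(\hh)$ is nonempty (Subcase 1), and in each of these it minimizes $|S_{v_k}|$ only over that \emph{local} set, splitting on $|S_{v_k}|\in\{1,2,3\}$ (the bound $|S_{v_k}|\le 3$ coming from the $c'$-minor condition with $r=4$) and choosing $C$ to be $S_{x_1'}$, $\{x_1\}$, $\{x_0,x_1\}$, or $S_{v_j'}$ accordingly, exactly as in Theorem~\ref{block:3}. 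When both local neighborhoods miss $V(\hh)$, it does not return to the $\ell$-stratification at all: using Lemma~\ref{not-edge} and a further split on whether $x_2\in V(\hh)$ and on $\diam(G)$, it shows that $x_2$ or $x_3$ lies in a unique edge of $\hh$ and is therefore simplicial. This sidesteps the interior condition-(b) vertices entirely, which is precisely the step your outline is missing. As written, your proposal is an accurate roadmap of the framework but does not constitute a proof of the theorem.
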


\begin{proof}
By Theorem \ref{russ-result}, Proposition~\ref{del-ope} and Observation \ref{obs-cater}, it suffices to show that every $c'$-minor of $\con_4(G)$ has a simplicial vertex, assuming $G$ is connected. Let $\mathcal{H}$ be a $c'$-minor of $\con_4(G)$, and let us follow the notation introduced in Set-up~\ref{allsetup}. 
If $1 \leq n \leq 4$, then by Theorem~\ref{block-diam}, the hypergraph $\mathcal{H}$ contains a simplicial vertex. Now suppose $n \geq 5$. If $\ell = 1$, then Lemma~\ref{first-sim} guarantees that $\mathcal{H}$ has a simplicial vertex. It remains to consider the case when $\ell \neq 1$.

\vskip 1mm
\noindent
\textit{Case 1:} Suppose $N_G(x_2) \cap V_2 \cap V(\mathcal{H}) \neq \emptyset$. This forces $\ell = 2$. Let 
$N_G(x_2) \cap V_2 \cap V(\mathcal{H}) = \{v_1, \ldots, v_q\}.$
Using the notation from Observation~\ref{block-nota}, define $v_k$ such that
$|S_{v_k}| = \min \left\{ |S_{v_i}| \mid i \in \{1, \ldots, q\} \right\}.$
We claim that $v_k$ is a simplicial vertex of $\mathcal{H}$. To establish this, consider any two edges $e_1, e_2 \in E(\mathcal{H})$ with $v_k \in e_1 \cap e_2$ and $e_1 \neq e_2$. By Lemma~\ref{tech-rs}, there exist edges $f_1, f_2 \in E(\con_4(G))$ such that $S_{v_k} \subseteq f_i$ and $f_i \cap V(\mathcal{H}) = e_i$ for $i = 1, 2$. Moreover, the set
$f_1 \cup f_2 \setminus \left( S_{v_k} \cup \{v_k\} \right)$
is connected. 
Let $x_1' \in e_1 \setminus e_2$ and $x_2' \in e_2 \setminus e_1$. Note that $|S_{v_k}| \leq 3$; otherwise, if $|S_{v_k}| \geq 4$, then $\{v_k\} \in E(\mathcal{H})$, contradicting the assumption that $\mathcal{H}$ is a $c'$-minor.
Now, following the analysis in Case 1 of Theorem~\ref{block:3}, we observe:
If $|S_{v_k}| = 1$, then either $x_1'$ or $x_2'$ must belong to $V_2$.
If $2 \leq |S_{v_k}| \leq 3$, then either $x_1'$ or $x_2'$ must lie in $\{v_1, \ldots, v_q\}$.
Without loss of generality, assume:
\[
x_1' \in 
\begin{cases}
    V_2, & \text{if } |S_{v_k}| = 1, \\
    \{v_1, \ldots, v_q\}, & \text{if } 2 \leq |S_{v_k}| \leq 3.
\end{cases}
\]
In the second case, let $x_1' = v_j$ for some $j \in \{1, \ldots, q\}$. Define
\[
C = 
\begin{cases}
    S_{x_1'}, & \text{if } x_1' \in V_2, \\
    S_{v_j},  & \text{if } x_1' = v_j.
\end{cases}
\]
Then each component of $C$ satisfies all the conditions listed in \eqref{cond}. Since $|C| \geq |S_{v_k}|$, Lemma~\ref{tech-rs1} implies that $v_k$ is a simplicial vertex of $\mathcal{H}$.

\vskip 1mm
\noindent
\textit{Case 2:} Suppose $N_G(x_2) \cap V_2 \cap V(\mathcal{H}) = \emptyset$.  
\vskip 1mm  

\noindent  
\textit{Subcase 1:} Suppose $N_G(x_3) \cap V_2 \cap V(\mathcal{H}) \neq \emptyset$, which forces $\ell = 2$. Let  
$N_G(x_3) \cap V_2 \cap V(\mathcal{H}) = \{v_1', \ldots, v_s'\}.$  
Using the notation from Observation~\ref{block-nota}, define  
$|S_{v_k'}| = \min \left\{ |S_{v_i'}| \mid i \in \{1, \ldots, s\} \right\}.$  
We claim that $v_k'$ is a simplicial vertex of $\mathcal{H}$. To prove this, consider two distinct edges $e_1, e_2 \in E(\mathcal{H})$ with $v_k' \in e_1 \cap e_2$. By Lemma~\ref{tech-rs}, there exist edges $f_1, f_2 \in E(\con_4(G))$ such that $S_{v_k'} \subseteq f_i$ and $f_i \cap V(\mathcal{H}) = e_i$ for $i = 1, 2$. Moreover, the set  
$f_1 \cup f_2 \setminus \left( S_{v_k'} \cup \{v_k'\} \right)$  
is connected. Let $x_1' \in e_1 \setminus e_2$ and $x_2' \in e_2 \setminus e_1$.  

We analyze the three possibilities for $|S_{v_k'}|$:

\begin{enumerate}
    \item If $|S_{v_k'}| = 1$: 
    As in the proof of Theorem~\ref{block:3}, one of $x_1'$ or $x_2'$ must belong to $V_2 \cup \{x_2\}$. Without loss of generality, assume $x_1' \in V_2 \cup \{x_2\}$. Define  
    \[
    C =  
    \begin{cases} 
      S_{x_1'} & \text{if } x_1' \in V_2, \\  
      \{x_1\} & \text{if } x_1' = x_2.  
    \end{cases}
    \]

    \item If $|S_{v_k'}| = 2$:  
    A similar argument as above shows that $x_1' \in \{v_1', \ldots, v_s'\} \cup \{x_2\}$. Define  
    \[
    C =  
    \begin{cases} 
      S_{v_j'} & \text{if } x_1' = v_j', \\  
      \{x_0, x_1\} & \text{if } x_1' = x_2,  
    \end{cases}
    \]  
    where $x_0 \in V(B_1) \cap V_1$.

    \item If $|S_{v_k'}| = 3$:  
    Then either $x_1'$ or $x_2'$ belongs to $\{v_1', \ldots, v_s'\}$. Without loss of generality, assume $x_1' = v_j'$ and set $C = S_{v_j'}$.  
\end{enumerate}

In all cases, each component of $C$ satisfies conditions (i)–(iv) in \eqref{cond}. Hence, by Lemma~\ref{tech-rs1}, $v_k'$ is a simplicial vertex of $\mathcal{H}$.  
\vskip 1mm  

\noindent  
\textit{Subcase 2:} Suppose $N_G(x_3) \cap V_2 \cap V(\mathcal{H}) = \emptyset$.
First, assume $x_2 \notin V(\mathcal{H})$. Then, by Lemma~\ref{not-edge}, it follows that $x_3, x_4 \in V(\mathcal{H})$. Since
$N_G(x_2) \cap V_2 \cap V(\mathcal{H}) = \emptyset$ and $N_G(x_3) \cap V_2 \cap V(\mathcal{H}) = \emptyset,$
the only edge in $\mathcal{H}$ containing $x_3$ is $\{x_3, x_4\}$, implying that $x_3$ is a simplicial vertex in $\mathcal{H}$.

Now, assume $x_2 \in V(\mathcal{H})$. Suppose $\diam(G) > 5$. Since $\mathcal{H}$ is a $c'$-minor and hence cannot contain isolated edges, at least one of $x_3$ or $x_4$ must be in $V(\mathcal{H})$; otherwise, $\{x_2\}$ would form an isolated edge, contradicting the definition of a $c'$-minor.
Define $f = \{x_0, x_1, x_2, x_3, x_4\}$, where $x_0 \in V(B_1) \cap V_1$. The intersection $f \cap V(\mathcal{H})$ is:
\[
f \cap V(\mathcal{H}) = 
\begin{cases} 
\{x_2, x_3\}       & \text{if } x_3 \in V(\mathcal{H}),\ x_4 \notin V(\mathcal{H}), \\
\{x_2, x_4\}       & \text{if } x_3 \notin V(\mathcal{H}),\ x_4 \in V(\mathcal{H}), \\
\{x_2, x_3, x_4\} & \text{if } x_3, x_4 \in V(\mathcal{H}). 
\end{cases}
\]
Given that 
$N_G(x_2) \cap V_2 \cap V(\mathcal{H}) = \emptyset \quad \text{and} \quad N_G(x_3) \cap V_2 \cap V(\mathcal{H}) = \emptyset,$
no additional vertices from $V_2$ are adjacent to $x_2$ or $x_3$ in $\mathcal{H}$. Thus, the only edge in $\mathcal{H}$ containing $x_2$ is $f \cap V(\mathcal{H})$, making $x_2$ a simplicial vertex.

Now suppose $\diam(G) = 5$. Then $x_4 \notin V(\mathcal{H})$. Moreover, $x_3 \in V(\mathcal{H})$; otherwise, $\{x_2\} \in E(\mathcal{H})$, contradicting the $c'$-minor property. Hence, $\{x_2, x_3\} \in E(\mathcal{H})$. Since $N_G(x_i) \cap V_2 \cap V(\mathcal{H}) = \emptyset$ for $i = 2,3$, it follows that $x_2$ is a simplicial vertex of $\mathcal{H}$.

\vskip 1mm  
\noindent  
Thus, in all cases, $\mathcal{H}$ contains a simplicial vertex, completing the proof.
\end{proof}
For $\TT_1$-graphs, the structural conditions are strong enough to guarantee shellability uniformly once the independence parameter is sufficiently large. This yields the following general result.  
\begin{theorem}\label{block-5}
    If $G$ is a $\TT_1$-graph, then $\ind_r(G)$ is shellable for all $r \geq 5$.
\end{theorem}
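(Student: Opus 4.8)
The plan is to follow the same template established in Theorems~\ref{block:3} and \ref{block-4}: by Theorem~\ref{russ-result}, Proposition~\ref{del-ope}, and Observation~\ref{obs-cater} (which confirms $\TT_1$-graphs form a hierarchy), it suffices to exhibit a simplicial vertex in every $c'$-minor $\mathcal{H}$ of $\con_r(G)$ for a connected $\TT_1$-graph $G$ and each $r \geq 5$. I would fix the maximum-length clique path $\cp(B_1,\dots,B_n)$ with connecting vertices $x_1,\dots,x_{n-1}$ guaranteed by Definition~\ref{def:T1-graph}, adopt the notation of Set-up~\ref{allsetup}, and dispose of the small cases immediately: if $1 \le n \le 4$ then $\diam(G) \le 4$ and Theorem~\ref{block-diam} applies, while if $\ell = 1$ then Lemma~\ref{first-sim} produces a simplicial vertex. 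So the substantive work is confined to $n \ge 5$ and $\ell \neq 1$.

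For the main case I would split on where the ``active'' $V_2$-vertices sit relative to the spine, exactly paralleling the case analysis in Theorem~\ref{block-4} but pushed one clique further along the path because $r \ge 5$ forces every edge of $\con_r(G)$ to have size $r+1 \ge 6$. Concretely, I would examine successively whether $N_G(x_2) \cap V_2 \cap V(\mathcal{H})$, then $N_G(x_3) \cap V_2 \cap V(\mathcal{H})$, and if needed $N_G(x_4) \cap V_2 \cap V(\mathcal{H})$, is nonempty. In each nonempty case I select $v_k$ (resp.\ $v_k'$) minimizing $|S_{v_k}|$ among the relevant $V_2$-vertices via Observation~\ref{block-nota}, take distinct edges $e_1,e_2 \in E(\mathcal{H})$ through $v_k$, invoke Lemma~\ref{tech-rs} to lift them to $f_1,f_2 \in E(\con_r(G))$ with $S_{v_k} \subseteq f_i$ and $f_i \cap V(\mathcal{H}) = e_i$, and then argue that one of $x_1' \in e_1\setminus e_2$ or $x_2' \in e_2\setminus e_1$ lies in the appropriate attachment set. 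Here the crucial leverage of the $\TT_1$-structure is that every clique off the spine meets $P$ in a single connecting vertex, so the possible ``escape routes'' for $x_1', x_2'$ are limited to $V_2$-neighbors or the spine vertices $x_2,x_3,x_4$; the bound $|S_{v_k}| \le r-2 = 3$ (forced by the $c'$-minor condition, since $|S_{v_k}| \ge r-1$ would make $\{v_k\}$ an edge) controls how much room remains in each $f_i$. Setting $C$ to be the appropriate $S$-set (or a short spine segment such as $\{x_0,x_1\}$ or $\{x_0,x_1,x_2\}$, padded to meet $|C| \ge |S_{v_k}|$), I verify conditions (i)--(iv) of \eqref{cond} and conclude via Lemma~\ref{tech-rs1} that $v_k$ is simplicial.

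The boundary subcase, where all three neighborhoods $N_G(x_i) \cap V_2 \cap V(\mathcal{H})$ for $i=2,3,4$ are empty, is handled as in Subcase~2 of Theorem~\ref{block-4}: repeated application of Lemma~\ref{not-edge} forces a consecutive run of spine vertices into $V(\mathcal{H})$, and the emptiness of the $V_2$-neighborhoods then pins down the unique edge of $\mathcal{H}$ through the earliest such spine vertex, rendering it simplicial. One must branch on $\diam(G) = 5$ versus $\diam(G) > 5$ to determine how far $x_4, x_5$ persist in $V(\mathcal{H})$, mirroring the diameter bookkeeping already done for $\con_4$.

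I expect the main obstacle to be the counting in the mixed cases where $|S_{v_k}|$ attains its maximum value $3$: one must confirm that after placing $S_{v_k}$ inside each size-$(r+1)$ edge $f_i$, there genuinely remain at least $r+1$ vertices in $f_1 \cup f_2 \setminus (S_{v_k} \cup \{v_k\})$ together with $C$ to build the witnessing edge $e_3 \subseteq (e_1\cup e_2)\setminus\{v_k\}$, and that $C$ avoids $V(\mathcal{H})$ and respects the distance condition (iv). The delicate point is that $r \ge 5$ is exactly the threshold making the off-spine cliques ``deep enough'' that a $V_2$-vertex cannot be reached from the far side of a connecting vertex within an edge of $\con_r(G)$ without the path passing through that connecting vertex; verifying this cut-vertex argument cleanly — analogous to the $D' \cap C = \emptyset$ step in Lemma~\ref{tech-rs1} — is where the $\TT_1$-hypothesis must be used most carefully, and where an off-by-one in the clique-path indexing would most easily creep in.
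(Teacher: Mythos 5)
Your overall template---reducing via Theorem~\ref{russ-result}, Proposition~\ref{del-ope} and Observation~\ref{obs-cater} to finding a simplicial vertex in every $c'$-minor, disposing of $1 \le n \le 4$ and $\ell = 1$, then choosing $v_k$ to minimize $|S_{v_k}|$ and combining Lemmas~\ref{tech-rs} and~\ref{tech-rs1}---is exactly the paper's. But your case decomposition is imported wholesale from Theorem~\ref{block-4} and misses the structural fact that actually drives the paper's proof and explains why the conclusion holds uniformly for all $r \ge 5$: in a $\TT_1$-graph the only non-simplicial vertices are the connecting vertices, so $G \setminus V_1$ is the path $x_1 x_2 \cdots x_{n-1}$, whence $V_2 = \{x_1, x_{n-1}\}$ and more generally $V_\ell = \{x_{\ell-1}, x_{n-\ell}\}$. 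Consequently your cases on $N_G(x_3) \cap V_2 \cap V(\mathcal{H})$ and $N_G(x_4) \cap V_2 \cap V(\mathcal{H})$ are essentially vacuous, and the diameter split and the case analysis on $|S_{v_k}|$ are unnecessary; moreover your bound $|S_{v_k}| \le r-2$ is off by one (the $c'$-minor condition only rules out $|S_{v_k}| \ge r$, giving $|S_{v_k}| \le r-1$, so for $r \ge 5$ your three subcases would not exhaust the possibilities). For $\ell = 2$ the paper simply shows that $x_1'$ and $x_2'$ cannot both be interior spine vertices (a shortest-path argument inside $f_2$ using that each $x_i$ is a cut vertex), so one of them is the other endpoint $v_j \in V_2$, and $C = S_{v_j}$ satisfies \eqref{cond} with $|C| \ge |S_{v_k}|$ by minimality.

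The genuine gap is in your boundary case. When the $V_2$-neighborhoods are all empty you propose to invoke Lemma~\ref{not-edge}, but its hypothesis requires $x_0, \ldots, x_{r-2} \notin V(\mathcal{H})$ (and $r < n$); for $r \ge 5$ and $\ell = 3$ this would demand $x_2, x_3 \notin V(\mathcal{H})$, whereas $x_2 \in V_3$ is precisely the vertex one expects to find in $V(\mathcal{H})$. The lemma therefore does not apply, and the paper never uses it here. The correct replacement, which is what the paper does for $\ell \ge 3$, is direct: take $v \in V_\ell \cap V(\mathcal{H}) = \{x_{\ell-1}, x_{n-\ell}\} \cap V(\mathcal{H})$, say $v = x_{\ell-1}$; every other vertex of $\mathcal{H}$ is a spine vertex $x_q$ with $q > \ell - 1$, and since each $x_j$ is a cut vertex, any $f \in E(\con_r(G))$ containing $v$ and $x_q$ must contain all of $x_\ell, \ldots, x_{q-1}$. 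Picking $x_p \in e_1 \setminus e_2$ and $x_q \in e_2 \setminus e_1$ with $p < q$ then forces $x_p \in f_2 \cap V(\mathcal{H}) = e_2$, a contradiction, so $v$ lies in a unique edge and is simplicial. Your ``earliest spine vertex has a unique edge'' intuition is the right one, but it must be justified by this cut-vertex argument rather than by Lemma~\ref{not-edge}.
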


\begin{proof}
By Theorem \ref{russ-result}, Proposition~\ref{del-ope} and Observation \ref{obs-cater}, it suffices to show that every $c'$-minor of $\con_r(G)$ contains a simplicial vertex whenever $G$ is connected and $r \geq 5$. Let $\mathcal{H}$ be such a $c'$-minor, following the notation from Set-up~\ref{allsetup}.
If $1 \leq n \leq 4$, then Theorem~\ref{block-diam} guarantees that $\mathcal{H}$ has a simplicial vertex. Now consider $n \geq 5$. If $\ell = 1$, then Lemma~\ref{first-sim} applies. 
Suppose $\ell = 2$. Let $V_2 \cap V(\mathcal{H}) = \{v_1, \ldots, v_s\}$. Define
$|S_{v_k}| = \min \left\{ |S_{v_i}| \mid i \in \{1, \ldots, s\} \right\}.$
We claim that $v_k$ is a simplicial vertex of $\mathcal{H}$. To show this, take any two distinct edges $e_1, e_2 \in E(\mathcal{H})$ with $v_k \in e_1 \cap e_2$. By Lemma~\ref{tech-rs}, there exist edges $f_1, f_2 \in E(\con_r(G))$ such that
$S_{v_k} \subseteq f_i$, $f_i \cap V(\mathcal{H}) = e_i$ for  $i = 1, 2,$
and $f_1 \cup f_2 \setminus (S_{v_k} \cup \{v_k\})$ is connected. Let $x_1' \in e_1 \setminus e_2$ and $x_2' \in e_2 \setminus e_1$. 
We first claim that at least one of $x_1', x_2'$ lies in $V_2$. Suppose not; then both lie in $\{x_2, \ldots, x_{n-2}\}$. Since
$V_2 = A_1 \cup A_2$,  where $A_1 = N_G(x_2) \setminus (V_1 \cup \{x_3\})$, $A_2 = N_G(x_{n-2}) \setminus (V_1 \cup \{x_{n-3}\})$,
assume $v_k \in A_1$, $x_1' = x_p$, $x_2' = x_q$ with $p < q$. Since $v_k, x_2' \in f_2$, there is a shortest path $P(v_k, x_2')$ in $f_2$ with all internal vertices in $f_2$. Then
$P(v_k, x_2') = v_k x_2 \cdots x_q, \quad \text{so } x_i \in f_2 \text{ for } i = 2, \ldots, q.$
Since $x_p \in \{x_2, \ldots, x_{q-1}\}$, we get $x_p = x_1' \in e_2$, contradicting $x_1' \in e_1 \setminus e_2$. Thus, at least one of $x_1', x_2'$ lies in $V_2$.
Assume $x_1' = v_j \in V_2$, and define $C = S_{v_j}$. Then each component of $C$ satisfies the conditions in \eqref{cond}. Since $|C| \geq |S_{v_j}|$, Lemma~\ref{tech-rs1} ensures that $v_j$ is simplicial.

Now consider $\ell = 3$. Then $V_3 = \{x_2, x_{n-2}\}$. Suppose $x_2 \in V_3 \cap V(\mathcal{H})$. If $x_2$ lies in two edges $e_1 \neq e_2$ of $\mathcal{H}$, pick $x_1' \in e_1 \setminus e_2$, $x_2' \in e_2 \setminus e_1$ with $x_1' = x_p$, $x_2' = x_q$ and $p < q$. Then $P(x_2, x_q) = x_2 x_3 \cdots x_q$, so $x_p \in \{x_3, \ldots, x_{q-1}\} \subseteq f_2$, contradicting the assumption. Thus, only one edge contains $x_2$.
A similar argument applies for $\ell \geq 4$, completing the proof.
\end{proof}


\section{Shellability via Graph Constructions}
\label{modifications}

This section establishes the shellability of $r$-independence complexes for graphs built from several key operations. We first prove that attaching star-cliques to a vertex cover of any graph $H$ yields a shellable complex (Theorem~\ref{whisker}), a result we then extend to a broader class of chordal graphs (Theorem~\ref{she}). Furthermore, we demonstrate that shellability is preserved under a clique-based partitioning of a graph (Theorem~\ref{clique-whisker}). Finally, we introduce the concept of a clique cycle and prove that appending star-cliques to its structure also results in a shellable complex (Theorem~\ref{clique-cycle}). These constructive theorems significantly expand the known families of graphs with shellable higher independence complexes.

Let $H$ be a graph. A subset $S \subseteq V(H)$ is called a \emph{vertex cover} of $H$ if every edge of $H$ has at least one endpoint in $S$. Fix an integer $r \geq 1$. The graph $\CCG(H, S, r)$ is defined as the graph obtained from $H$ by attaching a star-clique graph $\SC(x)$ to each vertex $x \in S$, where $|V(\SC(x))| \geq r + 1$.

\begin{theorem}\label{whisker}
Let $G = \CCG(H, S, r)$, where $r \geq 1$. Then $\ind_r(G)$ is shellable.
\end{theorem}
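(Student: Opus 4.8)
The plan is to apply the contraction criterion of Theorem~\ref{russ-result}(1), \emph{not} the $\ww$-chordality criterion. Since $\ind(\con_r(G)) = \ind_r(G)$, it suffices to show that every contraction $\con_r(G)/V_c$ of $\con_r(G)$ has a simplicial vertex. I would emphasize at the outset why one must resist the temptation to prove $\con_r(G)$ is $\ww$-chordal: deleting all the attached star-clique vertices recovers $\con_r(H)$ as a \emph{minor} of $\con_r(G)$, and $\con_r(H)$ need not be $\ww$-chordal for arbitrary $H$ (as Proposition~\ref{not-chordal} shows). It is precisely the restriction to contractions (no deletions) that keeps the large star-cliques available as ``padding,'' and this is the crux of the whole argument.

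First I would record the structural facts. Every non-central vertex $v$ of a star-clique $\SC(x)$ is simplicial in $G$, since its neighborhood is exactly the clique of $\SC(x)$ containing $v$. Write $V(G) = W \sqcup S \sqcup \bigl(V(H)\setminus S\bigr)$, where $W$ is the set of all non-central star-clique vertices. Because $S$ is a vertex cover, $V(H)\setminus S$ is independent in $H$; hence every $y \in V(H)\setminus S$ has all of its $G$-neighbors in $S$, and (as $G$ has no isolated vertices) at least one such neighbor $s \in S$.

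Next, fix a contraction $\mathcal{H} = \con_r(G)/V_c$ and set $V(\mathcal{H}) = V(G)\setminus V_c$; I would split into two cases. If $W \cap V(\mathcal{H}) \neq \emptyset$, then a surviving non-central star-clique vertex is simplicial in $G$ and therefore simplicial in the minor $\mathcal{H}$ by Lemma~\ref{first-sim}, and we are done. Otherwise $W \subseteq V_c$ and $V(\mathcal{H}) \subseteq V(H)$; here the key move is to manufacture a singleton edge of $\mathcal{H}$, which is automatically a simplicial vertex. If some $x \in S$ survives, choose a connected subset $f \subseteq V(\SC(x))$ with $x \in f$ and $|f| = r+1$ (possible since $|V(\SC(x))| \geq r+1$); as $f\setminus\{x\}\subseteq W \subseteq V_c$, the edge $f$ contracts to $f\setminus V_c = \{x\}$, so $x$ is simplicial. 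If instead no vertex of $S$ survives, then $S\subseteq V_c$, and for any surviving $y\in V(H)\setminus S$ I would take an $S$-neighbor $s$ and pad with $r-1$ vertices of $\SC(s)\setminus\{s\}$ to form a connected set $f=\{y,s,w_1,\dots,w_{r-1}\}$ of size $r+1$; since $\{s,w_1,\dots,w_{r-1}\}\subseteq V_c$, again $f$ contracts to $\{y\}$ and $y$ is simplicial. The case $V(\mathcal{H})=\emptyset$ is trivial.

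The hard part is conceptual rather than computational: recognizing that $\ww$-chordality genuinely fails and that the correct instrument is the contraction-only criterion, then pinpointing exactly where each hypothesis enters. The size bound $|V(\SC(x))|\geq r+1$ is what guarantees enough padding to collapse an $(r+1)$-edge down to a single surviving vertex, while the vertex-cover hypothesis is what guarantees that every exposed vertex of $V(H)\setminus S$ can borrow padding from a fully-contracted neighboring star-clique. The remaining technical point I would check carefully is that the manufactured singleton $\{x\}$ (resp.\ $\{y\}$) is an inclusion-minimal edge of the contracted hypergraph and is not absorbed into an empty edge; handling the degenerate situation in which $V_c$ already contains an entire edge of $\con_r(G)$ disposes of this boundary case, and there $\mathcal{H}$ has a simplicial vertex vacuously.
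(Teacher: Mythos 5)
Your proof is correct and follows essentially the same route as the paper: both apply the contraction-only criterion of Theorem~\ref{russ-result}(1), dispose of the case where a star-clique leaf survives via Lemma~\ref{first-sim}, and otherwise use the $|V(\SC(x))|\geq r+1$ padding together with the vertex-cover hypothesis to collapse an edge of $\con_r(G)$ onto a single surviving vertex, producing a singleton edge and hence a simplicial vertex. The only real difference is organizational: you manufacture one singleton edge directly (at a surviving $x \in S$, or at a surviving $y \in V(H)\setminus S$ by borrowing padding from a contracted neighbour's star-clique), whereas the paper proves the stronger global statement that every edge of the contraction is a singleton via a shortest-path argument.
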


\begin{proof}
By Theorem~\ref{russ-result}(1), it suffices to show that every $c$-minor of $\con_r(G)$ contains a simplicial vertex. Let $\hh$ be a $c$-minor of $\con_r(G)$. We consider two cases based on the intersection of $V(\hh)$ with the set of simplicial vertices of $G$.
Let $V_1$ denote the set of all simplicial vertices of $G$. Suppose first that $V_1 \cap V(\hh) \neq \emptyset$. If there exists a vertex $v \in V_1 \cap V(\hh)$ such that $\{v\} \notin E(\hh)$, then by Lemma~\ref{first-sim}, $v$ is a simplicial vertex of $\hh$, and we are done.
Assume now that $\{v\} \in E(\hh)$ for every $v \in V_1 \cap V(\hh)$. We claim that if $a \in S \cap V(\hh)$, then $\{a\} \in E(\hh)$. Consider such a vertex $a$.
If $(\SC(a) \setminus \{a\}) \cap V(\hh) \neq \emptyset$, then choose $c \in (\SC(a) \setminus \{a\}) \cap V(\hh)$. Since $\{c\} \in E(\hh)$ by assumption, we can construct a connected set $\{c_1, \ldots, c_r, c\}$ such that $c_i \notin V(\hh)$ and $c_i \neq a$ for all $i$. Then $\{c_1, \ldots, c_r, a\}$ is connected, implying $\{a\} \in E(\hh)$.
If $(\SC(a) \setminus \{a\}) \cap V(\hh) = \emptyset$, then since $|\SC(a)| \geq r+1$, we can form a connected set of size $r+1$ containing $a$ and $r$ elements from $\SC(a) \setminus \{a\}$. Hence, $\{a\} \in E(\hh)$.

We now claim that every edge $e \in E(\hh)$ satisfies $|e| = 1$. Suppose for contradiction that some $e \in E(\hh)$ has $|e| \geq 2$. Let $a \in e$, and choose $b \in e \setminus \{a\}$ such that $\dd(a, b) \leq \dd(a, x)$ for all $x \in e \setminus \{a\}$.
If $a$ and $b$ are adjacent in $H$, then since $S$ is a vertex cover, either $a \in S$ or $b \in S$. Without loss of generality, assume $a \in S$. Then $\{a\} \in E(\hh)$ by the earlier claim, contradicting $|e| \geq 2$.
If $a$ and $b$ are not adjacent in $H$, then $e = f \cap V(\hh)$ for some $f \in E(\con_r(G))$, where $a, b \in f$. There exists a shortest path $a y_1 \cdots y_k b$ with $k \geq 1$, $y_i \in f$ for all $i$, and $y_i \notin V(\hh)$. Since $a \notin S$ (otherwise $\{a\} \in E(\hh)$), and $y_1$ is adjacent to $a$, the vertex cover property implies $y_1 \in S$.
 Suppose $\{c'\} \in E(\hh)$ for some $c' \in \SC(y_1) \setminus \{y_1\}$. Then there exists a connected set $\{c_1', \ldots, c_r', c'\}$ with $c_i' \notin V(\hh)$ for all $i$. The set $\{c_1', \ldots, c_r', y_1, a\}$ is connected with size at least $r+2$, implying $\{a\} \in E(\hh)$, a contradiction.
Therefore, $(\SC(y_1) \setminus \{y_1\}) \cap V(\hh) = \emptyset$. But then $\SC(y_1) \cup \{a\}$ is a connected set of size at least $r+1$, again implying $\{a\} \in E(\hh)$-another contradiction.
Thus, no such edge $e$ with $|e| \geq 2$ exists, and all edges in $\hh$ are singletons. If $V_1 \cap V(\hh) = \emptyset$, then again every edge of $\hh$ is of size 1. In all cases, $\hh$ contains a simplicial vertex. This completes the proof.
\end{proof}

The following result extends Theorem~\ref{whisker} by demonstrating the shellability of $\ind_r(G)$ for a broader class of chordal graphs. Specifically, for a chordal graph $H$ and $G = \CCG(H, V(H), t)$, we show that $\ind_r(G)$ is shellable for all $r \leq 2t+1$, thus relaxing the earlier bound on $r$.

\begin{theorem}\label{she}
    Let $H$ be a chordal graph and let $G = \CCG(H, V(H), t)$ for some $t \geq 1$. If $r \leq 2t + 1$, then $\ind_r(G)$ is shellable.
\end{theorem}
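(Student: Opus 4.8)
The plan is to invoke Theorem~\ref{russ-result}(1) and reduce the problem to showing that every $c$-minor $\hh$ of $\con_r(G)$ contains a simplicial vertex. Let $V_1$ be the set of simplicial vertices of $G$; since each star-clique $\SC(x)$ is attached to $H$ only through its center $x$, the set $V_1$ consists precisely of the non-central vertices of the attached star-cliques. If $V_1 \cap V(\hh) \neq \emptyset$, then any surviving star-clique vertex is simplicial in $\hh$ by Lemma~\ref{first-sim}, and we are done. Hence the substance of the argument lies in the case $V(\hh) \subseteq V(H)$, where every non-central star-clique vertex has been contracted.

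In this case I would first dispose of the situation where some surviving core vertex $v \in V(\hh)$ has an $H$-neighbor $u$ that was contracted. Since both $\SC(v)$ and $\SC(u)$ lie entirely outside $V(\hh)$ except for $v$, and they are joined by the edge $\{u,v\}$, their union is a connected set of size at least $2(t+1)=2t+2\geq r+1$ whose only vertex in $V(\hh)$ is $v$. Selecting a connected $(r+1)$-subset through $v$ then exhibits $\{v\}$ as an edge of $\hh$, so $v$ is simplicial. This is the first place where the hypothesis $r\leq 2t+1$ enters: in contrast with Theorem~\ref{whisker}, a single star-clique of size $t+1$ need not absorb an edge of size $r+1$, but two adjacent ones always do. It remains to treat the complementary case in which no surviving core vertex has a contracted neighbor, so that $V(\hh)$ is a union of connected components of $H$ and every contracted vertex reachable from $V(\hh)$ is a star-clique leaf.

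Here I would choose $v$ to be a simplicial vertex of the chordal graph $H[V(\hh)]$ (which exists by Dirac's theorem) and argue that $v$ is simplicial in $\hh$. The key structural observation is that, because the only contracted vertices adjacent to $V(\hh)$ are leaves attached to surviving centers, any edge $e = f \cap V(\hh)$ of $\hh$ has $H[e]$ connected: collapsing each leaf of $f$ onto its center pushes $f$ down to $H[e]$, so connectivity of $f$ forces connectivity of $H[e]$. Consequently every edge containing $v$ must contain an $H$-neighbor of $v$, and since $N_H(v)$ is a clique, for any two edges $e_1,e_2\ni v$ the set $(e_1\cup e_2)\setminus\{v\}$ is connected in $H$ and has at least two vertices. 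Choosing an edge $\{a,b\}$ of $H[(e_1\cup e_2)\setminus\{v\}]$ and padding it with $r-1\leq 2t$ leaves drawn from $\SC(a)$ and $\SC(b)$ produces a connected $(r+1)$-set $f_3$ with $f_3\cap V(\hh)\subseteq (e_1\cup e_2)\setminus\{v\}$, which witnesses the required edge $e_3$ and proves $v$ simplicial.

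I expect the main obstacle to be this last step: verifying that in the full-component case every edge through $v$ induces a connected subgraph of $H$, and that $(e_1\cup e_2)\setminus\{v\}$ remains connected after deleting the simplicial vertex. This is exactly where both chordality (to force $N_H(v)$ to be a clique, so that deleting $v$ does not disconnect the witnessing set) and the sharp bound $r\leq 2t+1$ (to guarantee that two star-cliques supply the $r-1$ padding leaves needed to realize $e_3$) are indispensable; the sharpness of this bound is precisely what Example~\ref{she-higher} is designed to demonstrate.
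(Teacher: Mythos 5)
Your proposal is correct and follows essentially the same route as the paper: both reduce via Theorem~\ref{russ-result}(1) to finding a simplicial vertex in every $c$-minor, dispose of surviving star-clique leaves with Lemma~\ref{first-sim}, use the union of two adjacent star-cliques (of size $2t+2\geq r+1$, which is exactly where $r\leq 2t+1$ enters) to realize singleton edges and $H$-edges as traces of hyperedges, and finish by extracting a simplicial vertex of a chordal induced subgraph of $H$ via Dirac's theorem. The only difference is organizational: the paper proves that every non-singleton edge of the minor is literally a $2$-element edge of $H$ (its Claims 1 and 2), so the minor is a chordal graph plus singleton edges, whereas you allow larger edges but show that each induces a connected subgraph of $H$ and then construct the witnessing edge $e_3$ directly from an $H$-edge in $(e_1\cup e_2)\setminus\{v\}$; both versions close in the same way.
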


\begin{proof}
By Theorem~\ref{russ-result}(1), it suffices to show that every $c$-minor of $\con_r(G)$ has a simplicial vertex. Let $\mathcal{H}$ be a $c$-minor of $\con_r(G)$, i.e., $\mathcal{H} = \con_r(G) / V_c$ for some $V_c \subseteq V(G)$. Since $G$ is chordal, we adopt the notation from Observation~\ref{chordal-nota}.

\medskip\noindent
\textit{Case 1: $V_1 \cap V(\mathcal{H}) \neq \emptyset$.}
 There exists $v \in V_1 \cap V(\mathcal{H})$ such that $\{v\} \notin E(\mathcal{H})$.  
By Lemma~\ref{first-sim}, $v$ is simplicial in $\mathcal{H}$.
Suppose $\{v\} \in E(\mathcal{H})$ for all $v \in V_1 \cap V(\mathcal{H})$.  
This implies that for every $x \in V(G) \setminus V(H)$, either $\{x\} \in E(\mathcal{H})$ or $x \in V_c$.

\noindent
\textsc{Claim 1:}
If $y \in V(\mathcal{H}) \cap V(H)$ and $\{y\} \notin E(\mathcal{H})$, then
$(\SC(y) \setminus \{y\}) \cap V(\mathcal{H}) = \emptyset.$

\noindent
Suppose, for contradiction, that there exists $c \in (\SC(y) \setminus \{y\}) \cap V(\mathcal{H})$. Since $\{c\} \in E(\mathcal{H})$, there exists an edge $f \in E(\con_r(G))$ of the form
$f = \{b_1, \ldots, b_r, c\},$
where $f \cap V(\mathcal{H}) = \{c\}$. However, $\{b_1, \ldots, b_r, y\}$ is connected in $G$, which would force $\{y\} \in E(\mathcal{H})$, a contradiction.

\noindent
\textsc{Claim 2:}
If $e \in E(\mathcal{H})$ and $|e| \neq 1$, then $e \in E(H)$.

\noindent
Let $e \in E(\mathcal{H})$ with $|e| \geq 2$, and suppose $e = f \cap V(\mathcal{H})$ for some $f \in E(\con_r(G))$.
If $|e| \geq 3$, take $a \in e$ and choose $x \in e \setminus \{a\}$ minimizing $\dd(a, x)$.
    If $a$ and $x$ were non-adjacent in $G$, there would exist a shortest path $P(a, x) = (a = y_0, y_1, \ldots, y_t = x)$ in $f$ with all $y_i \notin V(\mathcal{H})$ for $1 \leq i \leq t-1$ (since any $y_i \in V(\mathcal{H})$ would yield $\dd(a, y_i) < \dd(a, x)$).
     Considering $\mathrm{SC}(y_1)$, any $c' \in \mathrm{SC}(y_1) \setminus \{y_1\}$ with $\{c'\} \in E(\mathcal{H})$ would imply $\{a\} \in E(\mathcal{H})$, contradicting $\{a\} \notin E(\mathcal{H})$. Thus, $\mathrm{SC}(y_1) \setminus \{y_1\}$ must be disjoint from $V(\mathcal{H})$.
     The set $Q' = \mathrm{SC}(a) \cup \mathrm{SC}(y_1)$ is connected with $Q' \cap V(\mathcal{H}) = \{a\}$, forcing $\{a\} \in E(\mathcal{H})$, another contradiction.
Hence, $a$ and $x$ must be adjacent. Since $|\mathrm{SC}(a) \cup \mathrm{SC}(x)| \geq r+1$, there exists a connected set $g \subset \mathrm{SC}(a) \cup \mathrm{SC}(x)$ with $|g| = r+1$ containing $\{a, x\}$, yielding $g \cap V(\mathcal{H}) = \{a, x\} \in E(\mathcal{H})$. This contradicts $|e| \geq 3$, so we must have $e = \{a, x\} \in E(H)$.

Thus, the edge set of $\mathcal{H}$ can be written as
$E(\mathcal{H}) = E(L) \cup \{\text{singleton edges}\},$
where $L$ is an induced subgraph of $H$. Since $L$ is chordal, it has a simplicial vertex, and therefore $\mathcal{H}$ also has a simplicial vertex.

\medskip\noindent
\textit{Case 2: $V_1 \cap V(\mathcal{H}) = \emptyset$.}
In this case, $E(\mathcal{H}) = E(L') \cup \{\text{singleton edges}\}$ for some induced subgraph $L'$ of $H$. Since $L'$ is chordal, it admits a simplicial vertex, and thus so does $\mathcal{H}$.

In both cases, $\mathcal{H}$ has a simplicial vertex, proving the theorem.
\end{proof}

The following example demonstrates that the complex $\ind_r(G)$ may fail to be shellable when $r > 2t + 1$, even if $G = \CCG(H, V(H), t)$ for a chordal graph $H$ and $t \geq 1$.

\begin{example}\label{she-higher}
Let $H$ be a chordal graph with vertex set  
$V(H) = \{a_1, \ldots, a_n, b_1, \ldots, b_n, c, d\}$  
for some integer $n \geq 2$, and edge set  
\begin{align*}
    E(H) =\ &\big\{\{a_i, a_{i+1}\} \mid 1 \leq i \leq n-1\big\} \cup \big\{\{b_i, b_{i+1}\} \mid 1 \leq i \leq n-1\big\} \\
    &\cup\ \big\{\{a_1, c\}, \{a_1, d\}, \{b_1, c\}, \{b_1, d\}, \{c, d\}\big\}.
\end{align*}
Let $G = \CCG(H, V(H), t)$ be the graph obtained by attaching a star-clique graph $\SC(x)$ of size $t + 1$ to each vertex $x \in V(H)$. We claim that $\ind_r(G)$ is not shellable for $r = nt + n$.
Assume, for contradiction, that $\ind_r(G)$ is shellable. Define the set  
\[
S = \left( \bigcup_{i=1}^n \SC(a_i) \cup \bigcup_{j=1}^n \SC(b_j) \cup \SC(c) \cup \SC(d) \right) \setminus \{a_1, b_1, c,d\}.
\]  
By repeatedly applying \cite[Proposition 2.3]{ProvLouis} and \cite[Lemma 2.2(1)]{Russ11}, it follows that $\ind(\con_r(G) / S)$ is shellable.
However, one can verify that $\con_r(G) / S$ is isomorphic to the $4$-cycle $C_4$. By \cite[Theorem 10]{Wood2}, the complex $\ind(C_4)$ is not shellable, yielding a contradiction.
Hence, $\ind_r(G)$ is not shellable when $r = nt + n > 2t + 1$.
\end{example}

In \cite{CookNagel}, Cook II and Nagel introduced the notion of a clique whiskering of a graph. We generalize their construction as follows.
Let $G$ be a graph. A \emph{clique vertex-partition} of $G$ is a collection $\pi = \{W_1, \ldots, W_k\}$ of pairwise disjoint cliques (some of which may be empty) such that  
$\bigcup_{i=1}^k W_i = V(G).$
Given a graph $G$, a clique vertex-partition $\pi = \{W_1, \ldots, W_k\}$, and an integer $r \geq 1$, the \emph{$r$-clique whiskering} of $G$ with respect to $\pi$, denoted $G^\pi_r$, is defined as follows.
For each $1 \leq i \leq k$, let $t_i \geq r$ be an integer. The vertex set of $G^\pi_r$ is  
\[
V(G^\pi_r) = V(G) \cup \bigcup_{i=1}^k \{x_{i,1}, x_{i,2}, \ldots, x_{i,t_i}\},
\]
where each $x_{i,j}$ is a new vertex added to $G$. The edge set of $G^\pi_r$ is  
\[
E(G^\pi_r) = E(G) \cup \bigcup_{i=1}^k \left\{ \{a, b\} \mid a \neq b,\ a, b \in W_i \cup \{x_{i,1}, x_{i,2}, \ldots, x_{i,t_i}\} \right\}.
\]
In other words, for each clique $W_i$ in $\pi$, we introduce $t_i$ new vertices and form a clique on the vertex set $W_i \cup \{x_{i,1}, \ldots, x_{i,t_i}\}$.
Note that a graph $G$ may admit multiple clique vertex-partitions. In particular, every graph has at least one such partition, namely the \emph{trivial partition}  
$\pi = \{\{x_1\}, \ldots, \{x_n\}\}$,  where  $V(G) = \{x_1, \ldots, x_n\}$.

\begin{theorem}\label{clique-whisker}
    Let $G$ be a graph, and let $\pi = \{W_1, \ldots, W_k\}$ be a clique vertex-partition of $G$. Then $\ind_r(G^\pi_r)$ is shellable for all $r \geq 1$
\end{theorem}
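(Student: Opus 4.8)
The plan is to invoke Theorem~\ref{russ-result}(1): it suffices to show that every contraction (c-minor) $\hh = \con_r(G^\pi_r)/V_c$ has a simplicial vertex, since then $\ind_r(G^\pi_r) = \ind(\con_r(G^\pi_r))$ is shellable. Write $G' = G^\pi_r$ and, for each $i$, set $U_i = W_i \cup \{x_{i,1}, \ldots, x_{i,t_i}\}$, which is a clique of size at least $r+1$ because $t_i \geq r$. The decisive structural observation is that each added vertex $x_{i,j}$ satisfies $N_{G'}(x_{i,j}) = U_i \setminus \{x_{i,j}\}$, a clique, so every $x_{i,j}$ is a simplicial vertex of $G'$; let $V_1$ denote the set of all simplicial vertices of $G'$, which contains all the $x_{i,j}$.

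First I would dispose of the easy case: if some $v \in V_1 \cap V(\hh)$ satisfies $\{v\} \notin E(\hh)$, then Lemma~\ref{first-sim} immediately gives that $v$ is simplicial in $\hh$. So I may assume that every simplicial vertex of $G'$ lying in $V(\hh)$ forms a singleton edge of $\hh$; note this assumption holds vacuously in the subcase $V_1 \cap V(\hh) = \emptyset$, where all whiskers have been contracted.

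The core of the argument is to show, under this standing assumption, that every original vertex $a \in V(G) \cap V(\hh)$ also yields a singleton edge $\{a\} \in E(\hh)$. Fix such an $a$, say $a \in W_i$, and split on how many of the $t_i \geq r$ whiskers $x_{i,1}, \ldots, x_{i,t_i}$ lie in $V_c$. If at least $r$ of them are contracted, then the clique $\{a, x_{i,1}, \ldots, x_{i,r}\}$ is a connected $(r+1)$-set whose image under contraction is exactly $\{a\}$, so $\{a\} \in E(\hh)$. Otherwise some whisker $x_{i,j}$ survives in $V(\hh)$, whence $\{x_{i,j}\} \in E(\hh)$ by assumption; lifting this to an edge $f = \{x_{i,j}\} \cup F$ of $\con_r(G')$ with $F \subseteq V_c$ and $|F| = r$, I replace $x_{i,j}$ by $a$. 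Since $a$ and $x_{i,j}$ lie together in the clique $U_i$, we have $N_{G'}(x_{i,j}) \setminus \{a\} \subseteq N_{G'}(a)$, so $a$ is adjacent to every neighbor of $x_{i,j}$ inside $F$; hence $f' = \{a\} \cup F$ is again a connected $(r+1)$-set (and $a \notin F$ since $a \notin V_c$), and its contraction image is $\{a\}$, giving $\{a\} \in E(\hh)$.

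Once every vertex of $V(\hh)$ — original vertex or surviving whisker — is shown to be a singleton edge, simplicity of the hypergraph forces every edge of $\hh$ to be a singleton, and a vertex lying in its (necessarily unique) edge is simplicial; thus $\hh$ has a simplicial vertex and the proof concludes. I expect the replacement/domination step — verifying that swapping a surviving whisker $x_{i,j}$ for the original vertex $a$ preserves connectivity of the lifted $(r+1)$-set — to be the main point requiring care, together with the bookkeeping that $t_i \geq r$ always supplies either at least $r$ contracted whiskers or at least one surviving one. This mirrors the structure of the proofs of Theorem~\ref{whisker} and Theorem~\ref{she}, with the hypothesis $t_i \geq r$ providing exactly the reservoir of simplicial whiskers the argument needs.
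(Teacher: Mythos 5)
Your proposal is correct and follows essentially the same route as the paper: reduce to showing every $c$-minor has a simplicial vertex via Theorem~\ref{russ-result}(1), dispose of the easy case with Lemma~\ref{first-sim}, and then use the clique $W_i \cup \{x_{i,1},\ldots,x_{i,t_i}\}$ to show every surviving vertex forms a singleton edge, so the minor is totally disconnected. The whisker-for-$a$ replacement step and the use of $t_i \geq r$ as a reservoir of contractible vertices are exactly the paper's argument, with only a cosmetic difference in how the case split on the whiskers is phrased.
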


\begin{proof}
    By Theorem~\ref{russ-result}(1), it suffices to show that every $c$-minor of $\con_r(G^\pi_r)$ contains a simplicial vertex. Let $\hh$ be a $c$-minor of $\con_r(G^\pi_r)$, and let $V_1$ denote the set of all simplicial vertices in $G^\pi_r$.
First, suppose that $V_1 \cap V(\hh) \neq \emptyset$.  
    If there exists a vertex $v \in V_1 \cap V(\hh)$ such that $\{v\} \notin E(\hh)$, then by Lemma~\ref{first-sim}, $v$ is simplicial in $\hh$.
Otherwise, assume $\{v\} \in E(\hh)$ for all $v \in V_1 \cap V(\hh)$. We claim that in this case, $\{a\} \in E(\hh)$ for every vertex $a \in V(\hh)$.  
    If $a \in V_1 \cap V(\hh)$, then $\{a\} \in E(\hh)$ by assumption.  
    Now suppose $a \notin V_1$. Then $a \in V(G)$. Since $\pi = \{W_1, \ldots, W_k\}$ is a clique vertex-partition of $G$, $a \in W_i$ for some $1 \leq i \leq k$.  

    Consider two subcases:
   Suppose $\{c\} \in E(\hh)$ for some $c \in \{x_{i,1}, \ldots, x_{i,t_i}\}$. Then there exists a connected set $\{c_1, \ldots, c_r, c\}$ in $G^\pi_r$ such that $c_i \notin V(\hh)$ for all $1 \leq i \leq r$. Since $W_i \cup \{x_{i,1}, \ldots, x_{i,t_i}\}$ is a clique, it follows that $\{c_1, \ldots, c_r, a\}$ is also a connected set, so $\{a\} \in E(\hh)$.
Suppose $\{x_{i,1}, \ldots, x_{i,t_i}\} \cap V(\hh) = \emptyset$. Since $t_i \geq r$, we can select distinct vertices $x_{i,1}, \ldots, x_{i,r}$ such that $\{x_{i,1}, \ldots, x_{i,r}, a\}$ is connected in $G^\pi_r$, hence $\{a\} \in E(\hh)$.
    
    Thus, in either case, $\{a\} \in E(\hh)$ for all $a \in V(\hh)$, so $\hh$ is totally disconnected (i.e., all its edges are singleton sets).

    Now suppose $V_1 \cap V(\hh) = \emptyset$. Then every vertex of $\hh$ lies in $V(G)$. Repeating the same argument as above, we again conclude that $\hh$ is totally disconnected.

    In both cases, $\hh$ is totally disconnected, hence contains a simplicial vertex (every vertex is simplicial). This completes the proof.
\end{proof}

    We introduce the notion of a \emph{clique cycle graph}, which generalizes the classical concept of cycles.  
An \emph{$n$-clique cycle}, denoted by $\cc(B_1, \ldots, B_n)$, is a graph formed by a cyclic sequence of $n$ cliques $B_1, \ldots, B_n$, where $n \geq 3$. Each consecutive pair of cliques $B_i$ and $B_{i+1}$ (indices modulo $n$) intersects in exactly one vertex, denoted $x_i$. In particular, $B_n$ and $B_1$ share the vertex $x_n$, thereby completing the cycle. The vertices $x_1, \ldots, x_n$ are called the \emph{connecting vertices} of the $n$-clique cycle.
We define a graph $G(r)$, for a given integer $r \geq 1$, as follows: start with an $n$-clique cycle $\cc(B_1, \ldots, B_n)$ for some $n \geq 3$, and select a non-empty subset $A \subseteq \{x_1, \ldots, x_n\}$ of the connecting vertices. To each vertex $x \in A$, attach a star-clique graph $\SC(x)$, such that $|V(\SC(x))| \geq r + 1$ for at least one $x \in A$.


\begin{theorem}\label{clique-cycle}
    Let $G = G(r)$ for some $r \geq 1$. Then $\ind_r(G)$ is shellable.
\end{theorem}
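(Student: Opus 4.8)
The plan is to invoke Theorem~\ref{russ-result}(1): it suffices to prove that every $c$-minor $\hh$ of $\con_r(G)$ contains a simplicial vertex. First I would record the simplicial vertices of $G$ itself. Writing $\{x_1,\dots,x_n\}$ for the connecting vertices of the underlying $n$-clique cycle $\cc(B_1,\dots,B_n)$, every vertex of $G$ other than these connecting vertices is simplicial: a non-connecting vertex of a clique $B_i$ has its neighborhood inside $B_i$, and every leaf of an attached $\SC(x)$ has its neighborhood inside a single clique of that star-clique. Thus $V(G)\setminus V_1=\{x_1,\dots,x_n\}$, where $V_1$ denotes the set of simplicial vertices of $G$. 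Consequently, if $V_1\cap V(\hh)\neq\emptyset$, then Lemma~\ref{first-sim} immediately provides a simplicial vertex of $\hh$. The entire difficulty is therefore concentrated in the case $V(\hh)\subseteq\{x_1,\dots,x_n\}$, i.e. only connecting vertices survive.

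I would then fix a vertex $x^\ast\in A$ with $|V(\SC(x^\ast))|\geq r+1$, guaranteed by hypothesis, and observe that all its $\geq r$ leaves lie outside $V(\hh)$ (they are not connecting vertices), hence are contracted. Split on whether $x^\ast$ survives. If $x^\ast\in V(\hh)$, then choosing $r$ of its contracted leaves together with $x^\ast$ gives a connected set of size $r+1$, i.e. an edge $f\in E(\con_r(G))$ with $f\cap V(\hh)=\{x^\ast\}$; hence $\{x^\ast\}\in E(\hh)$, and since $\hh$ is simple, $x^\ast$ lies in exactly one edge and is simplicial.

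The remaining, and principal, case is $x^\ast\notin V(\hh)$, where I would use the large star-clique to break the cycle. Deleting $x^\ast$ turns the clique cycle into a clique path $\cp(B_1,\dots,B_n)$ with connecting vertices $x_1,\dots,x_{n-1}$; in such a path the connecting vertices contained in any connected set form a contiguous index interval, because each $x_k$ is a cut vertex. The key point is that no edge of $\hh$ can wrap around through $x^\ast$: if $f\in E(\con_r(G))$ straddles the two sides with $f=\{x^\ast\}\cup P\cup Q$ and both $P\cap V(\hh)$ and $Q\cap V(\hh)$ are nonempty, I would replace the $Q$-part by $|Q|\leq r$ contracted leaves of $\SC(x^\ast)$ to obtain a connected set $f'$ of size $r+1$ with $f'\cap V(\hh)=P\cap V(\hh)\subsetneq f\cap V(\hh)$; minimality then shows $f\cap V(\hh)\notin E(\hh)$. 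Hence every edge of $\hh$ has the form $[x_i,x_j]\cap V(\hh)$ for an index interval. Taking the surviving connecting vertex $x_a$ of smallest index, any two edges containing it are initial segments of $V(\hh)$ and would be nested, contradicting simplicity unless there is only one; thus $x_a$ lies in at most one edge and is simplicial.

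The step I expect to be the main obstacle is the elimination of wrap-around edges in this last case: this is exactly where the hypothesis that at least one attached star-clique has size $\geq r+1$ is indispensable, since without a contracted center carrying $\geq r$ spare leaves one cannot rewrite a straddling edge into a strictly smaller one, and the cycle could survive as a non-chordal (for instance $C_4$-type) minor whose independence complex is not shellable, as in Example~\ref{she-higher}. The technical points to verify carefully are that the replacement set $f'$ is genuinely connected and of size exactly $r+1$, and that the resulting interval hypergraph is simple so that the nesting argument at $x_a$ is valid.
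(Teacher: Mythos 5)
Your proposal is correct, and in the decisive case it takes a genuinely different route from the paper. Both arguments reduce via Theorem~\ref{russ-result}(1) to showing that every $c$-minor $\hh$ of $\con_r(G)$ has a simplicial vertex, both dispose of surviving simplicial vertices of $G$ with Lemma~\ref{first-sim}, and both pivot on the distinguished connecting vertex $x^\ast$ whose star-clique has at least $r$ spare leaves (your observation that $\{x^\ast\}\in E(\hh)$ forces $x^\ast$ to lie in a unique edge, hence be simplicial, is the same singleton-edge mechanism the paper uses). The divergence is in what happens after the cycle is broken: the paper deletes the vertex carrying the singleton edge, observes that the resulting graph is a $\mathcal{T}_1$-graph, and outsources the existence of a simplicial vertex to Theorem~\ref{block-5}; you instead argue directly inside $\hh$, using the spare leaves of $\SC(x^\ast)$ to show (by minimality of edges in a contraction) that no edge of $\hh$ can straddle both sides of $x^\ast$, so every edge is an interval of connecting vertices along the resulting clique path, and the extremal surviving connecting vertex lies in at most one edge because distinct edges of a simple hypergraph cannot be nested. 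Your route is more self-contained and, notably, works uniformly for all $r\geq 1$, whereas the paper's appeal to Theorem~\ref{block-5} is to a statement proved only for $r\geq 5$ (and its ``repeat the argument for the nearest surviving connecting vertex $c$'' step is left terse); the paper's version, when it applies, is shorter on the page because it reuses the $\mathcal{T}_1$-machinery. The only points you should nail down in a write-up are the ones you already flag, plus the degenerate wrap-around case in which a single edge $f$ of $\con_r(G)$ contains every connecting vertex: there the $P/Q$ split is ambiguous, but $f\cap V(\hh)$ is then all of $V(\hh)$, which is itself an interval, so the nesting argument at $x_a$ is unaffected.
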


\begin{proof}
    By Theorem~\ref{russ-result}(1), it suffices to show that every $c$-minor of $\con_r(G)$ contains a simplicial vertex.  
    Let $\hh$ be a $c$-minor of $\con_r(G)$, i.e., $\hh = \con_r(G)/S$ for some subset $S \subseteq V(G)$. Since $G = G(r)$ is constructed from an $n$-clique cycle with attached star-clique graphs, there exists a connecting vertex $x \in \{x_1, \ldots, x_n\}$ such that $|V(\SC(x))| \geq r+1$. Let  
    $V(\SC(x)) = \{z_1, \ldots, z_m, x\},~ \text{with } m \geq r.$
We consider the following two cases:
    \vskip 1mm
    \noindent
    \textit{Case 1:} $\{z_1, \ldots, z_m\} \cap V(\hh) = \emptyset$.
    We further divide this case:
Suppose $x \in V(\hh)$. Then, since $x$ forms part of a star-clique of size at least $r+1$ and its neighbors are excluded from $\hh$, we have $\{x\} \in E(\hh)$. By Lemma \cite[Lemma 3]{Se75},
        $\hh \setminus x = (\con_r(G)/S) \setminus x = \con_r(G \setminus x) / S.$
        Observe that $G \setminus x$ is a $\TT_1$-graph. By Theorem~\ref{block-5}, the complex $\con_r(G \setminus x)$ has a simplicial vertex, and hence so does $\hh \setminus x$. Therefore, $\hh$ itself has a simplicial vertex.
       Suppose $x \notin V(\hh)$. Choose a vertex $c \in V(\hh) \cap \{x_1, \ldots, x_n\}$ such that 
        $\dd(c, z_m) \leq \dd(a, z_m)$ for all  $a \in V(\hh) \cap \{x_1, \ldots, x_n\}.$        
        That is, $c$ is the connecting vertex closest to $z_m$ among those present in $\hh$. Therefore $\{c\}\in E(\hh)$.  Repeating the same argument as in above, we conclude that $\hh$ contains a simplicial vertex.
    \vskip 1mm
    \noindent
   \textit{Case 2:} $\{z_1, \dots, z_m\} \cap V(\mathcal{H}) \neq \emptyset$.  
   Let $y \in \{z_1, \dots, z_m\} \cap V(\mathcal{H})$. If $y$ is not a singleton edge in $\mathcal{H}$, then Lemma~\ref{first-sim} implies that $\mathcal{H}$ contains a simplicial vertex.  
If $y$ is a singleton edge in $\mathcal{H}$, then by an argument analogous to \textit{Case 1}, $\mathcal{H}$ must also contain a simplicial vertex.  
 In all cases, the $c$-minor $\hh$ has a simplicial vertex. Thus, by Theorem~\ref{russ-result}(1), $\ind_r(G)$ is shellable.
\end{proof}

The following example illustrates that the conclusions of Theorems~\ref{whisker}, \ref{clique-whisker}, and \ref{clique-cycle} may fail when $\CCG(H, S, t)$, $G^\pi_t$, or $G(t)$ is considered with $t < r$.

\begin{example}\label{last-ex}
Let $H$ be the $4$-cycle on vertices $\{x_1, x_2, x_3, x_4\}$, and let $G$ be the graph obtained by attaching a whisker to each vertex of $H$. Clearly, $G$ lies in the class of graphs considered in Theorems~\ref{whisker}, \ref{clique-whisker}, and \ref{clique-cycle}.
Now, set $r = 2$. Let $K$ denote the set of newly added whisker vertices, i.e., $K = V(G) \setminus \{x_1, x_2, x_3, x_4\}$. Using \cite[Proposition~2.3]{ProvLouis} and \cite[Lemma~2.2(1)]{Russ11}, we obtain
$\link_{\ind_r(G)}(K) = \ind(\con_r(G) / K) = \ind(C_4).$
By \cite[Theorem~10]{Wood2}, the independence complex $\ind(C_4)$ is not shellable. Hence, $\ind_r(G)$ is not shellable either.
\end{example}

\vspace*{2mm}  
\noindent  
\textbf{Acknowledgments:} We express our sincere gratitude to Priyavrat Deshpande, Anurag Singh, and Russ Woodroofe for their valuable insights and clarifications, which significantly contributed to the development of this work.  
The second author was partially supported by the National Board for Higher Mathematics (NBHM) and the Science and Engineering Research Board (SERB).

\vspace*{1mm} \noindent
\textbf{Data availability statement.} Data sharing not applicable to this article as no datasets were
generated or analysed during the current study.

\bibliographystyle{abbrv}
\bibliography{refs_reg} 
\end{document}